\renewcommand{\leq}{\leqslant}
\renewcommand{\geq}{\geqslant}
\newcommand{\concat}{\overset{\frown}{}}
\newtheoremstyle{mythm}
{.5\baselineskip}	
{.5\baselineskip}	
{}		
{}		
{\bf}	
{.\\}		
{ }		
{}		
\theoremstyle{mythm}
\newtheorem{thm}{Theorem}	
\newtheorem{lemma}[thm]{Lemma}
\newtheorem{prop}[thm]{Proposition}
\newtheorem{cor}[thm]{Corollary}
\newtheorem{defn}[thm]{Definition}
\newtheorem{example}[thm]{Example}
\newtheorem*{notn}{Notation}
\newtheorem*{rmk}{Remark}
\newtheoremstyle{myclaim}
{.5\baselineskip}	
{.5\baselineskip}	
{}		
{}		
{\sc}	
{. }		
{ }		
{}		
\theoremstyle{myclaim}
\newcommand{\astfill}{\noindent\xleaders\hbox{$\ast$}\hfill\kern0pt}
\definecolor{SoftRed}{HTML}{DD2222}
\newcommand{\textdefn}[1]{\textcolor{SoftRed}{\textbf{#1}}}
\newcommand{\dom}{\textrm{dom}}
\title{Closed Discrete Selection in the Compact Open Topology}
\author{Christopher Caruvana and Jared Holshouser}
\date{\today}
\begin{document}

\maketitle

\begin{abstract}
	In 2017, Tkachuk isolated the closed discrete selection property while working on problems related to function spaces \cite{Tkachuk2018}. In this paper we will study the closed discrete selection property and the related games and strategies on \(C_k(X)\). Clontz and Holshouser showed previously that the closed discrete selection game on \(C_p(X)\) is equivalent to a modification of the point-open game on \(X\). In this paper we show that the closed discrete selection game on \(C_k(X)\) is equivalent to a modification of the compact-open game on \(X\). We also connect discrete selection properties on \(C_k(X)\) to a variety of other properties on \(X\), \(C_k(X)\), and hyperspaces of \(X\).
\end{abstract}


\section{Introduction}

In 2017, Tkachuk isolated the closed discrete selection property while working on problems related to function spaces \cite{Tkachuk2018}. Tkachuk was able to connect this property for \(C_p(X)\) and \(C_p(X,[0,1])\) to topological properties of \(X\). Tkachuk also studied the game version of the closed discrete selection property and related strategies in that game on \(C_p(X)\) to strategies in the Gruenhage \(W\)-game on \(C_p(X)\) and the point open game on \(X\) \cite{TkachukGame}. Clontz and Holshouser \cite{ClontzHolshouser} strengthened this relationship, showing that strategies for the discrete selection game on \(C_p(X)\) are equivalent to strategies in a non-trivial modification of the point-open game on \(X\). They also related limited information strategies in the closed discrete selection game on \(C_p(X)\) to topological properties of both \(C_p(X)\) and \(X\).

In this paper we will study the closed discrete selection property and the related games and strategies on \(C_k(X)\), the continuous functions from \(X\) to \(\mathbb R\) endowed with the compact-open topology. There is a history of connecting properties of \(C_k(X)\) to properties of both \(X\) and its hyperspaces. We will be referencing and modifying results from Arens \cite{Arens}, Scheepers \cite{Scheepers1997}, and Ko{\v{c}}inac \cite{Kocinac}. We combine these techniques with classical methods, the ideas in Tkachuk's work, and a new approach to game duality currently in development by Clontz \cite{ClontzDuality}.

We have striven to be as general as possible in our methods and have produced robust lists of equivalences for the closed discrete selection principle on \(C_k(X)\), strategies for the corresponding closed discrete game, and limited information strategies for the same game. As in \cite{ClontzHolshouser}, the closed discrete selection game on \(C_k(X)\) is shown to be equivalent to a non-trivial modification of the compact-open game on \(X\).

In this version, we have
\begin{itemize}
    \item
    explicitly stated the convention that only non-trivial open covers are considered.
    \item
    included the class of \(\Lambda\) covers.
	\item
	pointed out errors in Propositions \ref{prop:Pawlikowski1} and \ref{prop:Pawlikowski2} as well as reference \href{https://arxiv.org/abs/2102.00296}{arXiv:2102.00296} where we have recovered the statements for \(k\)-covers.
	\item
	corrected Theorem \ref{thm:FinalBoss}.
\end{itemize}

\section{Definitions}

\begin{defn}
	For a topological space \(X\), we let \(C_p(X)\) denote the set of all continuous functions \(X \to \mathbb R\) endowed with the topology of point-wise convergence.
	We also let \(\mathbf 0\) be the function which identically zero.
\end{defn}
\begin{defn}
	For a topological space \(X\), we let \(C_k(X)\) denote the set of all continuous functions \(X \to \mathbb R\) endowed with the topology of uniform convergence on compact subsets of \(X\).
	We will write
	\[
	    [f;K,\varepsilon] = \left\{ g \in C_k(X) : \sup\{ |f(x)-g(x)| : x \in K \} < \varepsilon \right\}
	\]
	for \(f \in C_k(X)\), \(K \subseteq X\) compact, and \(\varepsilon > 0\).
\end{defn}
\begin{defn}
	For a topological space \(X\), we let \(K(X)\) denote the family of all non-empty compact subsets of \(X\) and \(\mathbb K(X)\) be the set \(K(X)\) endowed with the \textdefn{Vietoris topology} which is the topology generated by sets of the form
	\begin{itemize}
		\item \(\text{below}(U) := \{ K \in K(X) : K \subseteq U \}\) and
		\item \(\text{touch}(U) := \{ K \in K(X) : K \cap U \neq \emptyset \}\)
	\end{itemize}
	for each open \(U \subseteq X\).
	We will write
	\[
	    [U_0;U_1,U_2,\ldots,U_n] = \text{below}(U_0) \cap \bigcap_{j=1}^n \text{touch}(U_j),
	\]
	for \(U_0 , U_1 , U_2 , \ldots , U_n \subseteq X\) open.
	Without loss of generality, we will impose the restrictions that \(U_j \subseteq U_0\) for \(1 \leq j \leq n\) and that \( U_1, U_2, \ldots , U_n\) are pair-wise disjoint.
	For a deeper discussion of this topology, the authors recommend \cite{MichaelVietoris}.
\end{defn}

In this paper, we will be concerned with selection principles and related games.
For classical results, basic tools, and notation, the authors recommend \cite{SakaiScheppers} and \cite{KocinacSelectedResults}.

\begin{defn}
    Given a set \(\mathcal A\) and another set \(\mathcal B\), we define the \textdefn{finite selection principle} \(S_{\text{fin}}(\mathcal A, \mathcal B)\) to be the assertion that, given any sequence \(\{A_n : n \in \omega\} \subseteq \mathcal A\), there exists a sequence \(\{ \mathcal F_n : n \in \omega \}\) so that, for each \(n \in \omega\), \(\mathcal F_n\) is a finite subset of \(A_n\) (denoted as \(\mathcal F_n \in [A_n]^{<\omega}\) hereinafter) and \(\bigcup\{ \mathcal F_n : n \in \omega \} \in \mathcal B\).
\end{defn}
\begin{defn}
    Given a set \(\mathcal A\) and another set \(\mathcal B\), we define the \textdefn{single selection principle} \(S_1(\mathcal A, \mathcal B)\) to be the assertion that, given any sequence \(\{A_n : n \in \omega\} \subseteq \mathcal A\), there exists a sequence \(\{ B_n : n \in \omega \}\) so that, for each \(n \in \omega\), \( B_n \in A_n\) and \(\{ B_n : n \in \omega \} \in \mathcal B\).
\end{defn}

\begin{rmk}
    In general, we impose the condition that open covers be non-trivial; i.e. a collection of open sets \(\mathscr U\) of a space \(X\) is a cover if \(\bigcup \mathscr U = X\) and \(X \not\in \mathscr U\).
\end{rmk}
\begin{defn}
    Let \(X\) be a topological space.
    We say that and open cover \(\mathscr U\) of \(X\) is a \textdefn{\(\lambda\)-cover} if every point is contained in infinitely many members of \(\mathscr U\).
\end{defn}
\begin{defn}
	Let \(X\) be a topological space.
	We say that \(\mathscr U\) is an \textdefn{\(\omega\)-cover} of \(X\) provided that \(\mathscr U\) is an open cover \(X\) with the additional property that, given any finite subset \(F\) of \(X\), there exists some \(U \in \mathscr U\) so that \(F \subseteq U\).
	An infinite \(\omega\)-cover \(\mathscr U\) is said to be a \textdefn{\(\gamma\)-cover} if, for every finite subset \(F \subseteq X\), \(\{ U \in \mathscr U : F \not\subseteq U \}\) is finite.
	If \(\mathscr U = \{U_n : n \in \omega\}\), then \(\mathscr U\) is a \(\gamma\)-cover if and only if every cofinal sequence of the \(U_n\) form an \(\omega\)-cover.
\end{defn}

\begin{defn}
	Let \(X\) be a topological space.
	We say that \(\mathscr U\) is a \textdefn{\(k\)-cover} of \(X\) provided that \(\mathscr U\) is an open cover \(X\) with the additional property that, given any compact subset \(K\) of \(X\), there exists some \(U \in \mathscr U\) so that \(K \subseteq U\).
	An infinite \(k\)-cover \(\mathscr U\) is said to be a \textdefn{\(\gamma_k\)-cover} if, for every compact \(K \subseteq X\), \(\{ U \in \mathscr U : K \not\subseteq U \}\) is finite.
	If \(\mathscr U = \{U_n : n \in \omega\}\), then \(\mathscr U\) is a \(\gamma_k\)-cover if and only if every cofinal sequence of the \(U_n\) form an \(k\)-cover.
\end{defn}

\begin{notn}
	We let
	\begin{itemize}
	    \item
	    \(\mathscr T_X\) denote the set of all non-empty subsets of \(X\).
	    \item
	    \(\Omega_{X,x}\) denote the set of all \(A \subseteq X\) with \(x \in \text{cl}_X(A)\).
	    We also call \(A \in \Omega_{X,x}\) a \textdefn{blade} of \(x\).
	    \item
	    \(\Gamma_{X,x}\) denote the set of all sequences \(\{x_n : n \in \omega\} \subseteq X\) with \(x_n \to x\).
	    \item
	    \(\mathcal D_X\) denote the collection of all dense subsets of \(X\).
	    \item
	    \({CD}_X\) denote the collection of all closed and discrete subsets of \(X\).
		\item
		\(\mathcal O_X\) denote the collection of all open covers of \(X\).
		\item
		\(\Lambda_X\) denote the collection of all \(\lambda\)-covers.
		\item
		\(\Omega_X\) denote the collection of all \(\omega\)-covers of \(X\).
		\item
		\(\Gamma_X\) denote the collection of all \(\gamma\)-covers of \(X\).
		\item
		\(\mathcal K_X\) denote the collection of all \(k\)-covers of \(X\).
		\item
		\(\Gamma_k(X)\) denote the collection of all \(\gamma_k\)-covers of \(X\).
		\item
		For a family of sets \(\mathcal A\), let \(\mathcal O(X, \mathcal A)\) to be all open covers \(\mathscr U\) so that for every \(A \in \mathcal{A}\), there is an open set \(U \in \mathscr U\) which contains \(A\); i.e.
        \[
        \mathcal O(X, \mathcal A) = \{\mathscr{U} \in \mathcal O_X: (\forall A \in \mathcal A)(\exists U \in \mathscr U)[A \subseteq U]\}.
        \]
        \item
		For a family of sets \(\mathcal A\), let \(\Lambda(X, \mathcal A)\) to be all open covers \(\mathscr U\) so that for every \(A \in \mathcal{A}\), there are infinitely many \(U \in \mathscr U\) which contain \(A\); i.e.
        \[
        \Lambda (X, \mathcal A) = \{\mathscr{U} \in \mathcal O_X : (\forall A \in \mathcal A)(\exists^\infty U \in \mathscr U)[A \subseteq U]\}.
        \]
        \item
        For a family of sets \(\mathcal A\), let \(\Gamma(X, \mathcal A)\) to be all infinite open covers \(\mathscr U\) so that for every \(A \in \mathcal{A}\), \(\{ U \in \mathscr U : A \not\subseteq U\}\) is finite.
	\end{itemize}
\end{notn}

\begin{defn}
The following selection principles are known by the following names.
    \begin{itemize}
        \item
        \(S_{\text{fin}}(\Omega_{X,x} , \Omega_{X,x})\) is known as the \textdefn{countable fan tightness} property for \(X\) at \(x \in X\).
        \item
        \(S_1(\Omega_{X,x} , \Omega_{X,x})\) is known as the \textdefn{strong countable fan tightness} property for \(X\) at \(x \in X\).
        \item
        \(S_{\text{fin}}(\mathcal D_X , \Omega_{X,x})\) is known as the \textdefn{countable dense fan tightness} property for \(X\) at \(x \in X\).
        \item
        \(S_1(\mathcal D_X , \Omega_{X,x})\) is known as the \textdefn{strong countable dense fan tightness} property for \(X\) at \(x \in X\).
    \end{itemize}
\end{defn}
\begin{defn}
A space is said to be a \textdefn{Fr{\'{e}}chet-Urysohn} space if, for any \(A \subseteq X\) and \(x \in \text{cl}_X(A)\), there exists a sequence \(\{ x_n : n \in \omega\} \subseteq A\) so that \(x_n \to x\).
A space is said to be a \textdefn{strong Fr{\'{e}}chet-Urysohn} space if, for any sequence \(\{A_n : n \in \omega\}\) and
\[
    x \in \bigcap_{n \in \omega} \text{cl}_X(A),
\]
there exists a sequence \(\{ x_n : n \in \omega\}\) so that, for each \(n \in \omega\), \(x_n \in A_n\) and \(x_n \to x\).
\end{defn}

One easily sees that if \(X\) has countable fan tightness at each of its points, then \(X\) has countable dense fan tightness at each of its points.
Similarly, if \(X\) has countable dense fan tightness at each of its points, then \(X\) is strong Fr{\'{e}}chet-Urysohn.

\(\omega\)-length games with two players are useful for characterizing and calibrating a variety of topological properties. In full generality, these games are played as follows.
\begin{itemize}
    \item The game is played in rounds indexed by the natural numbers. In each round, both players play elements from some set \(\mathcal E\).
    \item The result of a play of the game is a sequence \(A_0,B_0,A_1,B_1,\cdots\).
    \item Which player wins is decided by a set \(\mathcal X \subseteq \mathcal E^\omega\). Player One wins if \((A_0,B_0,A_1,B_1,\cdots) \in \mathcal X\). Otherwise player Two wins. Frequently, part of the win condition will be that player One must play elements of some set \(\mathcal A\) and player Two must play elements of a different set \(\mathcal B\).
\end{itemize}
This can be written in tabular form:
	\[
		\begin{array}{c|ccc}
			\text{I} & A_0 & A_1 & A_2 \cdots\\
			\hline
			\text{II} & B_0 & B_1 & B_2 \cdots
		\end{array}
	\]

A strategy is a function \(\sigma : \mathcal E^{<\omega} \to \mathcal E\). Frequently \(\sigma\) is used to refer to a strategy for player One and \(\tau\) is reserved for player Two. A strategy for player One will produce \(A_n\) from the previous \(n-1\) rounds. A strategy for player Two will produce \(B_n\) from the previous \(n-1\) rounds and \(A_n\).
\begin{itemize}
    \item Player One has winning strategy if there is a strategy \(\sigma\) for One so that no matter what player Two plays in response, One wins the resulting play of the game. We write \(\text{I} \uparrow \mathcal G\).
    \item Player Two has a winning strategy if there is a strategy \(\tau\) for Two so that no matter what player One plays in response, Two wins the resulting play of the game. We write \(\text{II} \uparrow \mathcal G\).
\end{itemize}
If player One has a winning strategy, then player Two does not and vice versa.
It is possible that neither player One nor player Two have a winning strategy.

The strategies just discussed were strategies of perfect information.
It is also possible to have limited information strategies.
\begin{itemize}
    \item A \textdefn{tactic} for One is a strategy which only considers the most recent move from player Two. Formally it is a function \(\sigma:\mathcal E \to \mathcal E\). If One has a winning tactic we write \(\text{I} \underset{\text{tact}}{\uparrow} \mathcal G\).
    \item A \textdefn{Markov strategy} for Two is a strategy which only considers the most recent move of player One and the current turn number. Formally it is a function \(\tau:\mathcal E \times \omega \to \mathcal E\). If Two has a winning Markov strategy we write \(\text{II} \underset{\text{mark}}{\uparrow} \mathcal G\).
    \item A \textdefn{predetermined strategy} for One is a strategy which only consider the current turn number. Formally it is a function \(\sigma:\omega \to \mathcal E\). If One has a winning predetermined strategy we write \(\text{I} \underset{\text{pre}}{\uparrow} \mathcal G\).
    \item All of these strategy types can be defined symmetrically for the other player.
\end{itemize}
\begin{defn}
    Two games \(\mathcal G_1\) and \(\mathcal G_2\) are said to be \textdefn{strategically dual} provided that the following two hold:
    \begin{itemize}
        \item \(\text{I} \uparrow \mathcal G_1 \text{ iff } \text{II} \uparrow \mathcal G_2\)
        \item \(\text{I} \uparrow \mathcal G_2 \text{ iff } \text{II} \uparrow \mathcal G_1\)
    \end{itemize}
    Two games \(\mathcal G_1\) and \(\mathcal G_2\) are said to be \textdefn{Markov dual} provided that the following two hold:
    \begin{itemize}
        \item \(\text{I} \underset{\text{pre}}{\uparrow} \mathcal G_1 \text{ iff } \text{II} \underset{\text{mark}}{\uparrow} \mathcal G_2\)
        \item \(\text{I} \underset{\text{pre}}{\uparrow} \mathcal G_2 \text{ iff } \text{II} \underset{\text{mark}}{\uparrow} \mathcal G_1\)
    \end{itemize}
    Two games \(\mathcal G_1\) and \(\mathcal G_2\) are said to be \textdefn{dual} provided that they are both strategically dual and Markov dual.
\end{defn}

Of particular interest are games derived from selection principles, i.e. selection games.

\begin{defn}
	Given a set \(\mathcal A\) and another set \(\mathcal B\), we define the \textdefn{finite selection game} \(G_{\text{fin}}(\mathcal A, \mathcal B)\) for \(\mathcal A\) and \(\mathcal B\) as follows:
	\[
		\begin{array}{c|ccc}
			\text{I} & A_0 & A_1 & A_2 \cdots\\
			\hline
			\text{II} & \mathcal F_0 & \mathcal F_1 & \mathcal F_2 \cdots
		\end{array}
	\]
	where each \(A_n \in \mathcal A\) and \(\mathcal F_n \in [A_n]^{<\omega}\).
	We declare Two the winner if \(\bigcup\{ \mathcal F_n : n \in \omega \} \in \mathcal B\).
	Otherwise, One wins.
\end{defn}
\begin{defn}
	Similarly, we define the \textdefn{single selection game} \(G_1(\mathcal A, \mathcal B)\) as follows:
	\[
		\begin{array}{c|ccc}
			\text{I} & A_0 & A_1 & A_2 \cdots\\
			\hline
			\text{II} & B_0 & B_1 & B_2 \cdots
		\end{array}
	\]
	where each \(A_n \in \mathcal A\) and \(B_n \in A_n\).
	We declare Two the winner if \(\{ B_n : n \in \omega \} \in \mathcal B\).
	Otherwise, One wins.
\end{defn}

\begin{rmk}
    In general, \(S_1(\mathcal A, \mathcal B)\) is true if and only if \(\text{I} \underset{\text{pre}}{\not\uparrow} G_1(\mathcal{A},\mathcal{B})\), see \cite[Prop. 13]{ClontzHolshouser}.
\end{rmk}

\begin{rmk}
	The game \(G_{\text{fin}}(\mathcal O_X,\mathcal O_X)\) is the well-known Menger game and the game \(G_1(\mathcal O_X, \mathcal O_X)\) is the well-known Rothberger game.
\end{rmk}

\begin{notn}
	For \(A \subseteq X\), let \(\mathscr N(A)\) be all open sets \(U\) so that \(A \subseteq U\). We will write \(\mathscr N_x\) in place of \(\mathscr N(\{x\})\).
	Set \(\mathscr N[X] = \{\mathscr N_x :x \in X\}\), and in general if \(\mathcal A\) is a collection of subsets of \(X\), then \(\mathscr N[\mathcal A] = \{\mathscr N(A) :A \in \mathcal{A}\}\).
\end{notn}
\begin{rmk}
    Oftentimes, when \(\mathscr N[\mathcal A]\) is being used in a game, we will use the identification of \(A\) with \(\mathscr N(A)\) to simplify notation.   
    Particularly, One picks \(A \in \mathcal A\) and Two's response will be an open set \(U\) so that \(A \subseteq U\).
\end{rmk}

\begin{rmk}
	The game \(G_1(\mathscr N[X],\neg \mathcal O_X)\) is the well-known point-open game first appearing in \cite{Galvin1978} and the game \(G_1(\mathscr N[K(X)], \neg \mathcal O_X)\) is the compact-open game.
\end{rmk}

\begin{defn}
    A topological space \(X\) is called \textdefn{discretely selective} if, for any sequence \(\{ U_n : n \in \omega \}\) of non-empty open sets, there exists a closed discrete set \(\{x_n : n \in \omega\} \subseteq X\) so that \(x_n \in U_n\) for each \(n \in \omega\); i.e. \(S_1(\mathscr T_X, {CD}_X)\) holds.
    This notion was first isolated by Tkachuk in \cite{Tkachuk2018}.
\end{defn}
\begin{defn}
	\label{defn:ClosedDiscrete}
	For a topological space \(X\), the \textdefn{closed discrete selection game} on \(X\), is \(G_1(\mathscr T_X, {CD}_X)\).
	Tkachuk studies this game in \cite{TkachukGame}.
\end{defn}
Note that \(X\) is discretely selective if and only if \(\text{I} \underset{\text{pre}}{\not\uparrow} G_1(\mathscr T_X, {CD}_X)\).

\begin{defn}
	\label{defn:ClosureGame}
	For a topological space \(X\) and a point \(x\in X\), the \textdefn{closure game} for \(X\) at \(x\in X\) is \(G_1(\mathscr T_X, \neg \Omega_{X,x})\).
	Tkachuk studies this game in \cite{TkachukGame} as well.
\end{defn}

\begin{defn}
	\label{defn:GruenhageGame}
	For a topological space \(X\) and \(x\in X\), the \textdefn{Gruenhage's \(W\)-game} for \(X\) at \(x\) is \(G_1(\mathscr N_x, \neg \Gamma_{X,x})\).
\end{defn}

\begin{prop}
    For a topological space \(X\), the following are equivalent:
    \begin{enumerate}[label=(\alph*)]
        \item
        \(X\) is strong Fr{\'{e}}chet-Urysohn
        \item
        \(X\) has strong countable fan tightness at each of its points
        \item
        for all \(x\in X\), \(S_1(\Omega_{X,x} , \Omega_{X,x})\)
        \item
        for all \(x\in X\), \(\text{I} \underset{\text{pre}}{\not\uparrow} G_1(\Omega_{X,x} , \Omega_{X,x})\)
    \end{enumerate}
\end{prop}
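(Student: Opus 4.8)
The plan is to route everything through condition (c): first dispose of the two bookkeeping equivalences, then spend the effort on the single topological implication that carries all the content.

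First I would observe that (b) and (c) are literally the same statement, since by definition \(X\) has strong countable fan tightness at \(x\) precisely when \(S_1(\Omega_{X,x},\Omega_{X,x})\) holds; reading ``at each of its points'' as ``for all \(x \in X\)'' makes (b)\(\Leftrightarrow\)(c) immediate. For (c)\(\Leftrightarrow\)(d) I would invoke the general fact noted above—that \(S_1(\mathcal A,\mathcal B)\) holds exactly when \(\text{I} \notprewin G_1(\mathcal A,\mathcal B)\)—applied with \(\mathcal A = \mathcal B = \Omega_{X,x}\) for each fixed \(x\), and then quantify over \(x \in X\). Neither step requires any topology beyond the definitions.

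Next I would dispatch the easy half of the remaining equivalence, (a)\(\Rightarrow\)(c). Fix \(x\) and a sequence \(\{A_n : n \in \omega\}\) with \(x \in \bigcap_{n} \text{cl}_X(A_n)\), so that each \(A_n \in \Omega_{X,x}\). Strong Fr\'echet--Urysohn supplies \(b_n \in A_n\) with \(b_n \to x\); since any sequence converging to \(x\) has \(x\) in its closure, \(\{b_n : n \in \omega\} \in \Omega_{X,x}\). This is exactly the selection demanded by \(S_1(\Omega_{X,x},\Omega_{X,x})\), so (c) holds.

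The substance is in (c)\(\Rightarrow\)(a), where the plan is an iterated and diagonalized use of the selection principle. Given \(\{A_n\}\) with \(x \in \bigcap_n \text{cl}_X(A_n)\), a single application of \(S_1(\Omega_{X,x},\Omega_{X,x})\) yields only \(b_n \in A_n\) with \(x \in \text{cl}_X\{b_n : n \in \omega\}\), which is strictly weaker than the convergence \(b_n \to x\) that (a) demands. My approach is to apply \(S_1\) repeatedly to a doubly-indexed rearrangement of the \(A_n\) and then diagonalize, using at each stage that every \(A_n\), being a blade of \(x\), meets every neighborhood of \(x\); this lets me keep re-selecting points of \(A_n\) inside progressively smaller neighborhoods while maintaining \(b_n \in A_n\). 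The main obstacle—and the only genuinely delicate step—is to organize this bookkeeping so that the closure-membership produced by \(S_1\) is promoted to honest convergence, i.e.\ so that every neighborhood of \(x\) capable of witnessing a failure of \(b_n \to x\) is eventually defeated. Once the iteration is arranged correctly, verifying that the resulting sequence converges to \(x\) is routine.
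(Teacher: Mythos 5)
Your handling of (b)\(\Leftrightarrow\)(c), (c)\(\Leftrightarrow\)(d), and (a)\(\Rightarrow\)(c) is correct, and that is all that can be checked against the paper, since the paper states this proposition with no proof at all. The trouble is concentrated in (c)\(\Rightarrow\)(a), which is precisely the step you leave as a ``plan.'' Taken on its own terms the sketch does not go through: ``re-selecting points of \(A_n\) inside progressively smaller neighborhoods'' presupposes a countable decreasing family of neighbourhoods of \(x\) that eventually enters every neighbourhood of \(x\); if such a family existed the conclusion \(x_n \to x\) would follow with no selection hypothesis whatsoever, and without one the phrase has no content. More fundamentally, each application of \(S_1(\Omega_{X,x},\Omega_{X,x})\) only ever certifies that a selected set has \(x\) in its closure. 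By partitioning the index set you can upgrade this to ``every neighbourhood of \(x\) contains \(x_n\) for infinitely many \(n\),'' i.e.\ clustering, but no iteration or diagonalization of a principle whose \emph{target} class is \(\Omega_{X,x}\) can certify convergence: there may be uncountably many essentially distinct neighbourhoods of \(x\) witnessing non-convergence, and each round of selection defeats only countably much. The ``only genuinely delicate step'' you defer is exactly the gap between \(S_1(\Omega_{X,x},\Omega_{X,x})\) and \(S_1(\Omega_{X,x},\Gamma_{X,x})\), and it cannot be closed by bookkeeping.

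Indeed the implication you are trying to prove is not a theorem of ZFC, so no amount of care will repair the sketch. By Sakai's theorem, \(C_p(Y)\) has strong countable fan tightness at \(\mathbf 0\) (hence, by homogeneity, at every point) if and only if \(Y\) satisfies \(S_1(\Omega_Y,\Omega_Y)\); by the Gerlits--Nagy theorem, \(C_p(Y)\) is Fr\'echet--Urysohn (equivalently, strongly Fr\'echet--Urysohn) if and only if \(Y\) is a \(\gamma\)-set, i.e.\ satisfies \(S_1(\Omega_Y,\Gamma_Y)\). Since \(\mathrm{non}(\gamma)=\mathfrak p\) while every set of reals of size less than \(\mathrm{cov}(\mathcal M)\) satisfies \(S_1(\Omega_Y,\Omega_Y)\), any model of \(\mathfrak p < \mathrm{cov}(\mathcal M)\) contains a set \(Y\) for which \(C_p(Y)\) satisfies (b), (c), and (d) but not (a). So the proposition as printed is itself erroneous in the direction (c)\(\Rightarrow\)(a) (and the same issue infects the paper's unproved assertion that countable dense fan tightness implies strong Fr\'echet--Urysohn). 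What is actually provable, and what your argument correctly establishes, is (b)\(\Leftrightarrow\)(c)\(\Leftrightarrow\)(d) together with the one-way implication (a)\(\Rightarrow\)(c); a correct two-way statement involving (a) would have to put \(\Gamma_{X,x}\), not \(\Omega_{X,x}\), in the second coordinate of the selection principle.
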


\section{Results}

\begin{defn}[\cite{ClontzDuality}]
    For a set \(X\), let
    \[
        \mathbf C(X) = \left\{ f : X \to \bigcup X : f(x) \in x \right\}.
    \]
    For a set \(\mathcal A\), we say that \(\mathcal B \subseteq \mathcal A\) is a \textdefn{selection basis} provided that
    \[
        (\forall A \in \mathcal A)(\exists B \in \mathcal B)(B \subseteq A).
    \]
    A set \(\mathcal R\) is said to be a \textdefn{reflection} of a set \(\mathcal A\) if
    \[
        \{ f[\mathcal R] : f \in \mathbf C(\mathcal R) \}
    \]
    is a selection basis for \(\mathcal A\).
\end{defn}
\begin{thm}[{\cite[Corollary 17]{ClontzDuality}}] \label{thm:Clontz}
    If \(\mathcal R\) is a reflection of \(\mathcal A\), then for a set \(\mathcal B\), the two games \(G_1(\mathcal A, \mathcal B)\) and \(G_1(\mathcal R, \neg \mathcal B)\) are dual.
\end{thm}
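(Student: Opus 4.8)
The plan is to unpack duality into its two strategic biconditionals and two Markov biconditionals and to establish each by translating a strategy in one game into a strategy for the opposite player in the other. Throughout I would lean on two features of a reflection $\mathcal R$ of $\mathcal A$ that are immediate from the definition of \emph{selection basis}: (i) every image $f[\mathcal R]$ with $f \in \mathbf C(\mathcal R)$ is itself a member of $\mathcal A$, since a selection basis is by definition a subfamily of $\mathcal A$; and (ii) every $A \in \mathcal A$ contains some $f[\mathcal R]$. A recurring device is that, given $A \in \mathcal A$, property (ii) supplies $f \in \mathbf C(\mathcal R)$ with $f[\mathcal R] \subseteq A$, so that for any $R \in \mathcal R$ the element $f(R)$ lies in $R \cap A$; this single move converts an $\mathcal R$-move into a legal response inside $A$, and conversely.

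First I would dispatch the three routine directions as direct simulations. For $\text{I} \win G_1(\mathcal A, \mathcal B) \Rightarrow \text{II} \win G_1(\mathcal R, \neg \mathcal B)$, let $\sigma$ be winning for One in the $\mathcal A$-game and have Two run a shadow copy of it, answering One's move $R_n$ by $f_n(R_n)$, where $f_n[\mathcal R] \subseteq \sigma(\text{shadow history})$ comes from (ii); the shadow and real Two-moves coincide, so the selection is an output of $\sigma$ and hence lies outside $\mathcal B$, which is Two's winning condition in the reflection game. The direction $\text{I} \win G_1(\mathcal R, \neg \mathcal B) \Rightarrow \text{II} \win G_1(\mathcal A, \mathcal B)$ is the mirror image, again via (ii). For $\text{II} \win G_1(\mathcal R, \neg \mathcal B) \Rightarrow \text{I} \win G_1(\mathcal A, \mathcal B)$, if $\tau$ is winning for Two in the reflection game then $g_n(R) := \tau(\text{history}, R)$ is a choice function, so by (i) the set $g_n[\mathcal R]$ is a legal One-move in the $\mathcal A$-game; One plays it, and Two's reply, being $\tau(\text{history}, R_n^\ast)$ for some $R_n^\ast \in \mathcal R$, lets One read off the realizer $R_n^\ast$ and continue, making the selection exactly a $\tau$-run and so outside $\mathcal B$.

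The one genuinely delicate direction is $\text{II} \win G_1(\mathcal A, \mathcal B) \Rightarrow \text{I} \win G_1(\mathcal R, \neg \mathcal B)$, and I expect this to be the crux. Here One must play the reflection game and drive Two's selection into $\mathcal B$ using Two's $\mathcal A$-strategy $\tau$, and the obstruction is that One may no longer answer with an $\mathcal A$-style image move, since One's moves must come from $\mathcal R$. The key lemma I would isolate is: for any choice function $c$ on $\mathcal A$ (that is, $c(A) \in A$ for all $A \in \mathcal A$), some $R \in \mathcal R$ satisfies $R \subseteq \ran{c}$. Its proof is a one-line diagonalization against (i): if every $R \in \mathcal R$ had a point $e_R \in R \setminus \ran{c}$, then $f(R) := e_R$ defines $f \in \mathbf C(\mathcal R)$ with $f[\mathcal R] \in \mathcal A$ yet $f[\mathcal R] \cap \ran{c} = \emptyset$, contradicting $c(f[\mathcal R]) \in f[\mathcal R] \cap \ran{c}$. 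Applying this at stage $n$ to $c_n(A) := \tau(A_0, \dots, A_{n-1}, A)$ yields $R_n \in \mathcal R$ with $R_n \subseteq \ran{c_n}$; One plays $R_n$, and whatever $r_n \in R_n$ Two chooses equals $\tau(A_0, \dots, A_{n-1}, A_n)$ for some $A_n \in \mathcal A$, which One appends to the shadow history. The selection $\{r_n : n \in \omega\}$ is then a genuine $\tau$-run and so lands in $\mathcal B$, i.e. One wins.

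Finally I would observe that the Markov duality needs no new ideas, because each construction above is local in the right sense. A predetermined One-move depends only on the round number, which is precisely the data that makes the choice functions $f_n, g_n, c_n$ depend only on $n$; conversely a Markov Two-strategy $\tau(\cdot, n)$ feeds exactly these round-indexed choice functions. Re-running the four simulations with histories suppressed therefore yields $\text{I} \prewin G_1(\mathcal A, \mathcal B) \Leftrightarrow \text{II} \markwin G_1(\mathcal R, \neg \mathcal B)$ and $\text{I} \prewin G_1(\mathcal R, \neg \mathcal B) \Leftrightarrow \text{II} \markwin G_1(\mathcal A, \mathcal B)$, with the key lemma again supplying the only nontrivial step, namely $\text{II} \markwin G_1(\mathcal A, \mathcal B) \Rightarrow \text{I} \prewin G_1(\mathcal R, \neg \mathcal B)$.
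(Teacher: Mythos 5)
Your proof is correct, but note that the paper does not actually prove this statement --- it imports it verbatim as \cite[Corollary 17]{ClontzDuality} and only proves the companion fact (Proposition \ref{prop:Reflection}) that $\mathscr N[\mathcal A]$ reflects $\mathcal O(X,\mathcal A)$, so there is no in-paper argument to compare against. Judged on its own, your decomposition into four strategic translations plus their Markov shadows is exactly the right shape, and the three ``routine'' directions are handled correctly by the observation that (ii) turns any $A \in \mathcal A$ into a choice function $f$ with $f(R) \in R \cap A$ while (i) turns any choice function $g \in \mathbf C(\mathcal R)$ into a legal One-move $g[\mathcal R] \in \mathcal A$. The genuinely load-bearing step is the one you flag: converting $\text{II} \win G_1(\mathcal A,\mathcal B)$ into $\text{I} \win G_1(\mathcal R,\neg\mathcal B)$, where One cannot simply play an image set. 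Your key lemma --- for every choice function $c$ on $\mathcal A$ some $R \in \mathcal R$ satisfies $R \subseteq \ran{c}$, proved by diagonalizing with $f(R) = e_R \in R \setminus \ran{c}$ against $c(f[\mathcal R]) \in f[\mathcal R]$ --- is exactly the content that makes ``reflection'' the correct hypothesis, and applying it to $c_n(A) = \tau(A_0,\dots,A_{n-1},A)$ correctly realizes Two's reply $r_n \in R_n$ as $\tau(A_0,\dots,A_n)$ for some recoverable $A_n$. Your closing observation that all four constructions degrade gracefully to the limited-information setting (predetermined moves need only the round index, which is precisely what a Markov strategy supplies to the round-indexed choice functions $f_n$, $g_n$, $c_n$) is also right, so the Markov duality comes for free. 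This is, in substance, the argument of Clontz's Corollary 17; no gaps.
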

\begin{prop} \label{prop:Reflection}
    Given a topological space \(X\) and a collection \(\mathcal A\) of subsets of \(X\), \(\mathscr N[\mathcal A]\) is a reflection of \(\mathcal O(X,\mathcal A)\).
\end{prop}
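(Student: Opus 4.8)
The plan is to unwind the definition of reflection for the two families at hand and verify the two halves of the selection-basis condition directly. Here the family being reflected is $\mathcal O(X,\mathcal A)$, and the candidate reflection is $\mathcal R := \mathscr N[\mathcal A]$, whose elements are the neighbourhood systems $\mathscr N(A)$ for $A \in \mathcal A$. The first step is to read off what $\mathbf C(\mathscr N[\mathcal A])$ is: a function $f \in \mathbf C(\mathscr N[\mathcal A])$ assigns to each $\mathscr N(A) \in \mathscr N[\mathcal A]$ an element $f(\mathscr N(A)) \in \mathscr N(A)$, i.e. an open set containing $A$. Thus the image $f[\mathscr N[\mathcal A]] = \{ f(\mathscr N(A)) : A \in \mathcal A \}$ is exactly a family obtained by choosing one open neighbourhood of each member of $\mathcal A$. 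Establishing the proposition amounts to showing that $\{ f[\mathscr N[\mathcal A]] : f \in \mathbf C(\mathscr N[\mathcal A]) \}$ is a selection basis for $\mathcal O(X,\mathcal A)$, which by definition has two parts: every such image lies in $\mathcal O(X,\mathcal A)$, and every member of $\mathcal O(X,\mathcal A)$ contains some such image.

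For the basis (refinement) half, I would start from an arbitrary $\mathscr U \in \mathcal O(X,\mathcal A)$ and use its defining property: for each $A \in \mathcal A$ there is some $U_A \in \mathscr U$ with $A \subseteq U_A$. Since $U_A$ is open and contains $A$, we have $U_A \in \mathscr N(A)$, so the assignment $\mathscr N(A) \mapsto U_A$ defines a legitimate $f \in \mathbf C(\mathscr N[\mathcal A])$ (invoking choice to fix one such $U_A$ per $A$). Its image $f[\mathscr N[\mathcal A]] = \{ U_A : A \in \mathcal A \}$ is then a subfamily of $\mathscr U$, exhibiting an element of the candidate basis contained in $\mathscr U$. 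This half is a direct translation of the definition of $\mathcal O(X,\mathcal A)$ and should cost nothing beyond bookkeeping.

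For the membership half, given any $f \in \mathbf C(\mathscr N[\mathcal A])$ I must check $f[\mathscr N[\mathcal A]] \in \mathcal O(X,\mathcal A)$. The $\mathcal A$-covering requirement is immediate: for each $A \in \mathcal A$ the set $f(\mathscr N(A))$ is a member of the image that contains $A$. The remaining point — and the one I expect to be the main obstacle — is that $f[\mathscr N[\mathcal A]]$ is a genuine open cover of $X$. Each $f(\mathscr N(A))$ is open and contains $A$, so $\bigcup f[\mathscr N[\mathcal A]] \supseteq \bigcup \mathcal A$; hence this reduces to, and uses, that $\mathcal A$ covers $X$, i.e. $\bigcup \mathcal A = X$, which holds in the cases of interest where $\mathcal A$ is the collection of singletons or of compact sets. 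One also has to square this with the standing convention that covers are non-trivial, since a priori $f$ may select $X \in \mathscr N(A)$ and produce a degenerate family; I would reconcile this via the convention under which such degenerate selections are excluded or treated as harmless, this being the only way membership can fail. Once membership and refinement are both in hand, the selection-basis condition is met and the proposition follows; this in turn feeds Theorem~\ref{thm:Clontz} to yield the duality between $G_1(\mathcal O(X,\mathcal A),\mathcal B)$ and $G_1(\mathscr N[\mathcal A], \neg \mathcal B)$.
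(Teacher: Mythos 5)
Your argument is correct, and its refinement half is exactly the paper's entire proof: given $\mathscr U \in \mathcal O(X,\mathcal A)$, choose for each $A$ some $U_A \in \mathscr U$ with $A \subseteq U_A$, and observe that the resulting selector has image inside $\mathscr U$. Where you genuinely diverge is that the paper stops there: it never checks the membership half of the selection-basis definition, i.e.\ that every image $f[\mathscr N[\mathcal A]]$ for $f \in \mathbf C(\mathscr N[\mathcal A])$ actually lies in $\mathcal O(X,\mathcal A)$, and you have correctly located the only place the proposition can fail. An arbitrary selector picks one open neighbourhood of each $A \in \mathcal A$, so its image covers $X$ only if $\bigcup \mathcal A = X$ — a hypothesis absent from the statement (take $\mathcal A$ a single proper compact set and a selector choosing a small neighbourhood of it) — and a selector may choose $X \in \mathscr N(A)$, producing a family that is not a cover under the paper's standing non-triviality convention. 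Both defects are invisible in every application made here, since $\mathcal A$ is always the family of singletons or of non-empty compact sets (which covers $X$ in the $T_1$ setting), and the degenerate selectors do not obstruct the duality argument; but as literally stated for an arbitrary collection $\mathcal A$ the proposition needs the extra hypothesis that $\mathcal A$ covers $X$, together with some dispensation for selectors that choose $X$. So this is not a gap in your proof so much as a gap you have correctly exposed in the paper's: your write-up is the more careful of the two, and the honest fix is to add the covering hypothesis (or to weaken ``selection basis'' to the refinement condition alone, which is all the paper ever verifies or uses).
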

\begin{proof}
    Let \(\mathscr U \in \mathcal O(X,\mathcal A)\).
    Define \(f : \mathcal A \to \mathscr U\) by choosing \(f(A) \in \mathscr U\) so that \(A \subseteq f(A)\).
    Now define \(F : \mathscr N[\mathcal A] \to \bigcup \mathscr N[\mathcal A]\) by
    \[
        F( \mathscr N(A) ) = f(A).
    \]
    Since \(f(A) \in \mathscr U\) for all \(A \in \mathcal A\), we see that
    \[
        F[\mathscr N[\mathcal A]] \subseteq \mathscr U.
    \]
    Hence, \(\mathscr N[\mathcal A]\) is a reflection of \(\mathcal O(X,\mathcal A)\).
\end{proof}
\begin{cor}
    \label{cor:GeneralGalvin}
    Let \(X\) be a topological space, \(\mathcal A\) be a collection of subsets of \(X\), and \(\mathcal B\) be a family of open covers of \(X\).
    Then the games \(G_1(\mathscr N[\mathcal A], \neg\mathcal B)\) and \(G_1(\mathcal O(X,\mathcal A) , \mathcal B)\) are dual.
\end{cor}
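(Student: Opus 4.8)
The plan is to treat this as a direct specialisation of Theorem \ref{thm:Clontz}, with the single genuine input being Proposition \ref{prop:Reflection}; there is essentially no new combinatorial or topological work to do. First I would apply Theorem \ref{thm:Clontz} with its abstract left-hand family instantiated as the collection of open covers \(\mathcal O(X,\mathcal A)\) and its reflection instantiated as \(\mathscr N[\mathcal A]\). The hypothesis of that theorem --- that \(\mathscr N[\mathcal A]\) be a reflection of \(\mathcal O(X,\mathcal A)\) --- is exactly the content of Proposition \ref{prop:Reflection}, so it is available for free.

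Once that hypothesis is in hand, Theorem \ref{thm:Clontz} yields directly that \(G_1(\mathcal O(X,\mathcal A),\mathcal B)\) and \(G_1(\mathscr N[\mathcal A], \neg\mathcal B)\) are dual. Since the relation ``is dual to'' is symmetric in its two arguments --- both the strategic and the Markov clauses of the definition are unchanged under swapping \(\mathcal G_1\) and \(\mathcal G_2\) --- this is precisely the assertion of the corollary, and the proof is complete.

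The only point demanding a moment's care is bookkeeping rather than mathematics: the symbol \(\mathcal A\) is overloaded, denoting the fixed collection of subsets of \(X\) in the corollary but the abstract selection family in the statement of Theorem \ref{thm:Clontz}. I would therefore be explicit that it is \(\mathcal O(X,\mathcal A)\), not \(\mathcal A\) itself, that plays the role of the theorem's left-hand family, while \(\mathscr N[\mathcal A]\) plays the role of its reflection. With that identification fixed, the standing hypothesis that \(\mathcal B\) be a family of open covers of \(X\) is harmless --- it serves only to make \(\neg\mathcal B\) sensible as the win set of the reflected game --- and the two results chain together with nothing left to verify. I expect no real obstacle here; the substance of the argument has already been discharged in the proofs of Theorem \ref{thm:Clontz} and Proposition \ref{prop:Reflection}.
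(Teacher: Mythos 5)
Your proposal is correct and matches the paper's own proof, which likewise just cites Theorem \ref{thm:Clontz} together with Proposition \ref{prop:Reflection}; your extra remarks on the symmetry of duality and the notational overloading of \(\mathcal A\) are accurate but not needed. Nothing further to add.
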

\begin{proof}
    Apply Theorem \ref{thm:Clontz} and Proposition \ref{prop:Reflection}.
\end{proof}

This general duality theorem extends the known results that the point-open game and the Rothberger game are dual, see \cite{Galvin1978}, and that the compact-open game and \(G_1(\mathcal K_X, \mathcal O_X)\) are dual, see \cite{Telgarsky1983} and \cite{AurichiDias}.
We also obtain the following as corollaries.

\begin{cor}
    \label{cor:kKODual}
	The games \(G_1(\mathscr N[K(X)],\neg \mathcal K_X)\) and \(G_1(\mathcal K_X, \mathcal K_X)\) are dual.
\end{cor}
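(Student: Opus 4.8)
The plan is to recognize this corollary as a direct instance of the general duality in Corollary \ref{cor:GeneralGalvin}, applied with \(\mathcal A = K(X)\) and \(\mathcal B = \mathcal K_X\). For this to work, the only thing that must be verified is the set-theoretic identity
\[
    \mathcal O(X, K(X)) = \mathcal K_X,
\]
after which Corollary \ref{cor:GeneralGalvin} delivers the duality of \(G_1(\mathscr N[K(X)], \neg\mathcal K_X)\) and \(G_1(\mathcal O(X,K(X)), \mathcal K_X)\), and the latter game is literally \(G_1(\mathcal K_X, \mathcal K_X)\). So the substance reduces entirely to establishing that one equality of collections.

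First I would unfold the two definitions. By definition, \(\mathcal O(X, K(X))\) consists of exactly those open covers \(\mathscr U\) of \(X\) for which every non-empty compact \(K \subseteq X\) is contained in some \(U \in \mathscr U\). On the other hand, \(\mathcal K_X\) consists of exactly those open covers \(\mathscr U\) for which every compact \(K \subseteq X\) is contained in some member of \(\mathscr U\). These two descriptions agree: ranging over \emph{non-empty} compact sets versus over \emph{all} compact sets yields the same family of covers, since the empty compact set is trivially contained in any member of a non-empty cover. I would also note that both families are subfamilies of \(\mathcal O_X\) and hence inherit the standing convention that covers are non-trivial, so no discrepancy arises on that front either. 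This gives both inclusions at once, hence the desired equality.

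With the identity \(\mathcal O(X, K(X)) = \mathcal K_X\) in hand, the conclusion follows immediately by substituting into Corollary \ref{cor:GeneralGalvin}. I do not anticipate any genuine obstacle: the entire content of the statement is the definitional coincidence of the \(k\)-covers with the open covers that catch every compact set, and the only points requiring mild care are the empty-compact-set edge case and the non-triviality convention noted above, both of which are routine. The real work has already been done in Proposition \ref{prop:Reflection} and Corollary \ref{cor:GeneralGalvin}; this corollary is simply the specialization of that machinery to the compact-set family \(K(X)\).
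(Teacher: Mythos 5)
Your proposal is correct and matches the paper's (implicit) argument exactly: the corollary is stated as an immediate consequence of Corollary \ref{cor:GeneralGalvin} with \(\mathcal A = K(X)\) and \(\mathcal B = \mathcal K_X\), using precisely the identification \(\mathcal O(X,K(X)) = \mathcal K_X\) that you verify. Your attention to the empty-compact-set edge case and the non-triviality convention is a harmless extra check that the paper leaves unstated.
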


\begin{cor}
The games \(G_1(\mathscr N[K(X)], \neg\mathcal O_X)\) and \(G_1(\mathcal K_X , \mathcal O_X)\) are dual.
\end{cor}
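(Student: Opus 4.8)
The plan is to invoke Corollary~\ref{cor:GeneralGalvin} as the preceding corollaries do, simply specializing the two parameters. Concretely, I would take \(\mathcal A = K(X)\), the family of all non-empty compact subsets of \(X\), and \(\mathcal B = \mathcal O_X\), the family of all open covers of \(X\). With these choices, Corollary~\ref{cor:GeneralGalvin} immediately yields that the games \(G_1(\mathscr N[K(X)], \neg \mathcal O_X)\) and \(G_1(\mathcal O(X, K(X)), \mathcal O_X)\) are dual, so the entire content of the statement reduces to identifying the second family correctly.

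The only point requiring attention is therefore the identification \(\mathcal O(X, K(X)) = \mathcal K_X\). I would verify this by unwinding the two definitions: an element of \(\mathcal O(X, K(X))\) is, by definition, an open cover \(\mathscr U\) such that for every \(A \in K(X)\) there is some \(U \in \mathscr U\) with \(A \subseteq U\). Since \(K(X)\) is precisely the collection of non-empty compact subsets of \(X\), this says exactly that \(\mathscr U\) is an open cover in which every compact set is contained in a single member, which is the definition of a \(k\)-cover. Hence the two families coincide, and substituting \(\mathcal K_X\) for \(\mathcal O(X, K(X))\) in the dual pair above completes the argument.

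There is essentially no obstacle to overcome here: all the work is carried by the general duality of Corollary~\ref{cor:GeneralGalvin}, and the present statement is a direct specialization together with the bookkeeping observation that \(\mathcal O(X, K(X))\) is literally the family of \(k\)-covers. Indeed, the same template—fix \(\mathcal A = K(X)\) and vary \(\mathcal B\)—produces Corollary~\ref{cor:kKODual} in the case \(\mathcal B = \mathcal K_X\), and the statement at hand is the companion case \(\mathcal B = \mathcal O_X\).
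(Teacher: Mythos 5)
Your proposal is correct and is precisely the paper's intended argument: the paper states this result without further proof as one of the corollaries "obtained" from Corollary~\ref{cor:GeneralGalvin}, i.e., by specializing to \(\mathcal A = K(X)\) and \(\mathcal B = \mathcal O_X\) and observing that \(\mathcal O(X, K(X))\) is by definition the family \(\mathcal K_X\) of \(k\)-covers. Nothing further is needed.
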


\begin{prop}[\cite{AurichiDias}]
	\(\text{I} \uparrow G_1(\mathscr N[K(X)],\neg \mathcal O_X) \Longleftrightarrow \text{II} \uparrow G_{\text{fin}}(\mathcal O_X,\mathcal O_X)\).
\end{prop}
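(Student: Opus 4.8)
The plan is to route the argument through the duality established in the preceding corollary. Since the compact-open game $G_1(\mathscr N[K(X)],\neg\mathcal O_X)$ and $G_1(\mathcal K_X,\mathcal O_X)$ are dual, we have $\text{I}\uparrow G_1(\mathscr N[K(X)],\neg\mathcal O_X)$ if and only if $\text{II}\uparrow G_1(\mathcal K_X,\mathcal O_X)$. Thus it suffices to prove the single equivalence $\text{II}\uparrow G_1(\mathcal K_X,\mathcal O_X)$ iff $\text{II}\uparrow G_{\text{fin}}(\mathcal O_X,\mathcal O_X)$, i.e. to transfer winning strategies for Two between the single-selection game on $k$-covers and the Menger game. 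The device for this transfer is the finite-union operation: for an open cover $\mathscr V$ write $\mathscr V^{\cup}=\{\bigcup\mathscr F:\mathscr F\in[\mathscr V]^{<\omega}\}$. By compactness $\mathscr V^{\cup}$ is a $k$-cover, it is closed under finite unions, and identifying a single member $\bigcup\mathscr F$ of $\mathscr V^{\cup}$ with the finite set $\mathscr F\subseteq\mathscr V$ lets a single selection from a $k$-cover stand in for a finite selection from an open cover.

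For the forward direction, suppose Two has a winning strategy $\tau$ in $G_1(\mathcal K_X,\mathcal O_X)$. I would define a strategy for Two in the Menger game as follows: whenever One plays the open cover $\mathscr V_n$, feed the $k$-cover $\mathscr V_n^{\cup}$ to $\tau$ as One's $n$-th move in a simulated play of $G_1(\mathcal K_X,\mathcal O_X)$; the strategy $\tau$ responds with a single set of the form $\bigcup\mathscr F_n$ with $\mathscr F_n\in[\mathscr V_n]^{<\omega}$, and Two answers $\mathscr F_n$ in the Menger game. Since $\tau$ is winning, $\{\bigcup\mathscr F_n:n\in\omega\}$ is an open cover, whence $\bigcup_n\mathscr F_n$ is an open cover and Two wins the Menger game. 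This direction is routine.

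The reverse direction is where the work lies, and I expect it to be the main obstacle. Given a winning Menger strategy $\tau$ for Two, the mirror-image construction would have Two, when One plays the $k$-cover $\mathscr U_n$, regard it as an open cover, feed it to $\tau$, and take the finite set $\mathscr F_n\in[\mathscr U_n]^{<\omega}$ that $\tau$ returns. The difficulty is immediate: in $G_1(\mathcal K_X,\mathcal O_X)$ Two must answer with a \emph{single} member of $\mathscr U_n$, yet $\bigcup\mathscr F_n$ need not belong to $\mathscr U_n$, since an arbitrary $k$-cover need not be closed under finite unions. The resolution must exploit exactly the $k$-cover hypothesis on One's moves: a $k$-cover contains, for each compact set, a single member engulfing it. I therefore expect the correct construction to arrange that the part of $X$ for which $\mathscr F_n$ is responsible is trapped inside a compact set $C_n$ — for instance by passing to the finite-union refinement $\mathscr U_n^{\cup}$ and maintaining an increasing reservoir of compacta extracted from the play so far — so that a single $U_n\in\mathscr U_n$ with $C_n\subseteq U_n$ can legitimately replace $\mathscr F_n$ while preserving the covering of $X$ that $\tau$ guarantees. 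Equivalently one may build One's winning strategy in the compact-open game directly, having One play the extracted compacta and feeding Two's neighborhoods back into the simulated Menger play; the same compact-capture step is the crux. Checking that this bookkeeping genuinely covers all of $X$ for an arbitrary space, rather than only in the $\sigma$-compact case where One can simply exhaust $X$ by compacta, is the delicate heart of the argument and the step I would spend the most care on.
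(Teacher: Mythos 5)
Your reduction through the paper's duality corollary is legitimate, and your forward direction is correct and complete: from a winning strategy for Two in \(G_1(\mathcal K_X, \mathcal O_X)\) one obtains a winning Menger strategy for Two by feeding the \(k\)-covers \(\mathscr V_n^{\cup}\) into it and reading the responses as finite selections. The problem is the reverse direction, which you explicitly leave open, and it is not a bookkeeping step to be "spent more care on" --- it is the entire content of the proposition (it is Telg\'{a}rsky's theorem, reproved directly by Scheepers, which is why the paper cites \cite{AurichiDias} rather than proving it). Your proposed mechanism --- trapping ``the part of \(X\) for which \(\mathscr F_n\) is responsible'' inside a compact set \(C_n\) --- cannot work as described: the Menger strategy returns a finite family \(\mathscr F_n\) whose union is an \emph{open} set, and in a general space there is no compact set containing it and no compact set canonically attached to it. An ``increasing reservoir of compacta extracted from the play so far'' begs the question of how compacta are to be extracted from open covers and finite subfamilies in the first place; the naive versions of this idea succeed only when \(X\) is \(\sigma\)-compact, which is exactly the case you acknowledge is insufficient.

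The missing idea is the kernel construction. Given a winning strategy \(\tau\) for Two in \(G_{\text{fin}}(\mathcal O_X,\mathcal O_X)\), for each finite history \(h\) of the Menger game one defines
\[
C(h) \;=\; \bigcap\left\{ \bigcup \tau\left(h \frown \langle\mathscr V\rangle\right) \;:\; \mathscr V \in \mathcal O_X \right\},
\]
the set of points Two is guaranteed to cover at the next round no matter which cover One plays. Since \(\tau\) always answers with a finite subfamily, every open cover of \(X\) has a finite subfamily covering \(C(h)\); for regular \(X\) this makes \(\operatorname{cl}(C(h))\) compact. These kernels are what One plays in the compact-open game, with Two's neighborhoods of them fed back into the simulated Menger play; the verification that the resulting open sets cover \(X\) uses the defining property of \(C(h)\): if \(x \notin C(h)\), some cover \(\mathscr V\) witnesses \(x \notin \bigcup\tau(h\frown\langle\mathscr V\rangle)\), and stringing such witnesses together against a point missed by Two produces a run of the Menger game lost by \(\tau\), a contradiction. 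None of this appears in your sketch, so as it stands the proposal establishes only one of the two implications.
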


\begin{lemma}
	\(\text{I} \uparrow G_{\text{fin}}(\mathcal O_X,\mathcal O_X) \implies \text{II} \uparrow G_1(\mathscr N[K(X)],\neg \mathcal O_X)\).
\end{lemma}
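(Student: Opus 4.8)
The plan is to turn a winning strategy for One in the Menger game into a winning strategy for Two in the compact--open game, using the simple observation that a finite union of open sets is again a single open set — this is exactly what lets a finite-selection game simulate a single-selection one.

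Let $\sigma$ be a winning strategy for One in $G_{\text{fin}}(\mathcal O_X,\mathcal O_X)$. I would define a strategy $\tau$ for Two in $G_1(\mathscr N[K(X)],\neg \mathcal O_X)$ that secretly runs a play of the Menger game in which One follows $\sigma$ while Two responds to One's compact sets. To make the bookkeeping deterministic, first fix (by choice) for each open cover $\mathscr V$ and each compact $K$ a finite subfamily of $\mathscr V$ that covers $K$. Now set $\mathscr U_0 = \sigma(\langle\rangle)$, an open cover of $X$. When One plays a compact set $K_0$, let $\mathcal F_0 \in [\mathscr U_0]^{<\omega}$ be the chosen finite subcover of $K_0$ (which exists and is non-empty since $K_0$ is compact, non-empty, and contained in $X = \bigcup\mathscr U_0$), and have Two respond with the open set $U_0 = \bigcup \mathcal F_0 \supseteq K_0$. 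Feed $\mathcal F_0$ back to $\sigma$ to obtain $\mathscr U_1 = \sigma(\langle \mathcal F_0\rangle)$, and continue recursively: given One's compact set $K_n$, let $\mathcal F_n \in [\mathscr U_n]^{<\omega}$ be the chosen finite subcover of $K_n$, have Two play $U_n = \bigcup \mathcal F_n$, and update $\mathscr U_{n+1} = \sigma(\langle \mathcal F_0,\ldots,\mathcal F_n\rangle)$. Since each $\mathscr U_n$, $\mathcal F_n$, and $U_n$ is determined by the history $K_0,\ldots,K_n$, this defines a legitimate strategy $\tau$ for Two, and each $U_n$ is a legal move because $K_n \subseteq U_n$.

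To verify that $\tau$ is winning, I would consider any play of the compact--open game in which Two uses $\tau$ and examine the associated Menger play $\mathscr U_0,\mathcal F_0,\mathscr U_1,\mathcal F_1,\ldots$. By construction One follows $\sigma$ there and each $\mathcal F_n$ is a legal move for Two, so since $\sigma$ is winning we get $\bigcup_{n}\mathcal F_n \notin \mathcal O_X$. Because One is only ever required to play non-trivial open covers, we have $X \notin \mathscr U_n$ for every $n$, hence no member of any $\mathcal F_n$ equals $X$; therefore the failure of $\bigcup_n \mathcal F_n$ to be a cover must come from $\bigcup(\bigcup_n \mathcal F_n) \neq X$. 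Since $\bigcup_n U_n = \bigcup_n \bigcup \mathcal F_n = \bigcup(\bigcup_n \mathcal F_n)$, there is a point $p \in X \setminus \bigcup_n U_n$, so $\{U_n : n \in \omega\}$ is not an open cover, i.e. $\{U_n : n \in \omega\} \in \neg \mathcal O_X$, and Two wins.

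The construction is essentially a translation between the two games, so I do not expect a single hard obstacle. The only point that genuinely requires care is the interaction with the convention that covers are non-trivial: one must confirm that ``$\bigcup_n \mathcal F_n$ is not a cover'' yields an actually uncovered point of $X$ (rather than merely having $X$ as a member), since it is precisely that uncovered point which witnesses Two's win in the compact--open game. Excluding $X$ from the $\mathscr U_n$, as the non-triviality convention guarantees, is what closes this gap.
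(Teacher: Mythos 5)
Your proposal is correct and follows essentially the same route as the paper: run the Menger game in the background with One playing $\sigma$, answer One's compact set $K_n$ by a finite subcover $\mathcal F_n$ of the current cover $\mathscr U_n$, have Two play $\bigcup\mathcal F_n$, and conclude from $\sigma$ being winning that $\bigcup_n\mathcal F_n$ is not a cover, so the $U_n$ miss a point. Your extra remark disposing of the degenerate case $X\in\bigcup_n\mathcal F_n$ via the non-triviality convention is a small point the paper leaves implicit, not a different argument.
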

\begin{proof}
	Suppose \(\text{I} \uparrow G_{\text{fin}}(\mathcal O_X,\mathcal O_X)\) and let \(\sigma\) be a winning strategy.
	It suffices to assume that that One is playing compact sets in \(G_1(\mathscr N[K(X)],\neg \mathcal O_X)\).
	Let \(\mathscr U_0 = \sigma(\emptyset)\) and \(K_0 \subseteq X\) be compact.
	Since \(K_0\) is compact, we let
	\[
		\tau(K_0) = \mathcal F_0 \in [\mathscr U_0]^{<\omega}
	\]
	be so that \(K_0 \subseteq \bigcup \mathcal F_0\).

	Now, for \(n \in \omega\), suppose we have open covers \(\mathscr U_0 , \mathscr U_1 , \ldots , \mathscr U_{n+1}\), compact sets \(K_0 , K_1 , \ldots , K_n\), and \(\mathcal F_j \in [\mathscr U_j]^{<\omega}\) for each \(j \leq n\) so that
	\[
		\langle \mathscr U_0 , \mathcal F_0 , \mathscr U_1 , \mathcal F_1 , \ldots , \mathscr U_n , \mathcal F_n , \mathscr U_{n+1} \rangle \in \sigma
	\]
	and
	\[
		\left\langle K_0 , \bigcup \mathcal F_0 , K_1 , \bigcup \mathcal F_1 , \ldots , K_n , \bigcup \mathcal F_n  \right\rangle \in \tau.
	\]
	Now, for any compact \(K_{n+1} \subseteq X\), we can let \(\mathcal F_{n+1} \in [\mathscr U_{n+1}]^{<\omega}\) be so that \(K_{n+1} \subseteq \bigcup \mathcal F_{n+1}\) and define
	\[
		\tau\left( K_0 , \bigcup \mathcal F_0 , K_1 , \bigcup \mathcal F_1 , \ldots , K_n , \bigcup \mathcal F_n , K_{n+1} \right) = \bigcup \mathcal F_{n+1}
	\]
	This completes the definition of \(\tau\).

	Now, since One wins \(G_{\text{fin}}(\mathcal O_X,\mathcal O_X)\) with \(\sigma\), it must be the case that \(\bigcup\{\mathcal F_n : n \in \omega\}\) fails to be a cover of \(X\).
	That is, if Two plays \(G_1(\mathscr N[K(X)], \neg\mathcal O_X)\) according to \(\tau\), Two wins.
\end{proof}

In \cite{AurichiDias}, it is shown that it is consistent with ZFC that, for a space \(X\), \(S_{\text{fin}}(\mathcal O_X,\mathcal O_X)\) holds but \(S_1(\mathscr N[K(X)], \neg\mathcal O_X)\) fails.
From this example, they ask if it is consistent with ZFC that the games \(G_1(\mathscr N[K(X)], \neg \mathcal O_X)\) and \(G_{\text{fin}}(\mathcal O_X,\mathcal O_X)\) are dual.
This is still open.

\subsection{Covering Properties}

The following is a generalization of a result from Telg{\'{a}}rsky \cite{Telgarsky1975}.

\begin{thm}
    \label{thm:GdeltaCovering}
    Let \(\mathcal A\) be a collection of \(G_\delta\) subsets of \(X\).
    Then the following are equivalent:
    \begin{enumerate}[label=(\alph*)]
        \item \label{GdeltaCoveringGame}
        \(\text{I} \uparrow G_1(\mathscr N[\mathcal A], \neg \mathcal O_X)\)
        \item \label{GdeltaCoveringSequence}
        there is a sequence \(\{A_n : n \in \omega\} \subseteq \mathcal A\) so that \(X = \bigcup_n A_n\)
        \item \label{GdeltaCoveringpreGame}
        \(\text{I} \underset{\text{pre}}{\uparrow} G_1(\mathscr N[\mathcal A], \neg \mathcal O_X)\).
    \end{enumerate}
\end{thm}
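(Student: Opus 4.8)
The plan is to establish the cycle \ref{GdeltaCoveringSequence} \(\Rightarrow\) \ref{GdeltaCoveringpreGame} \(\Rightarrow\) \ref{GdeltaCoveringGame} \(\Rightarrow\) \ref{GdeltaCoveringSequence}, so that all of the substance is concentrated in the last implication and the first two are routine. For \ref{GdeltaCoveringSequence} \(\Rightarrow\) \ref{GdeltaCoveringpreGame}, given \(\{A_n : n \in \omega\} \subseteq \mathcal A\) with \(X = \bigcup_n A_n\), I would have One play \(\mathscr N(A_n)\) on round \(n\); this ignores Two's moves and so is a predetermined strategy, and any legal response \(U_n \supseteq A_n\) by Two yields \(\bigcup_n U_n \supseteq \bigcup_n A_n = X\), so the open sets played by Two cover \(X\) and One wins. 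Thus \(\text{I} \prewin G_1(\mathscr N[\mathcal A], \neg \mathcal O_X)\). The implication \ref{GdeltaCoveringpreGame} \(\Rightarrow\) \ref{GdeltaCoveringGame} is immediate, since a predetermined winning strategy is in particular a winning strategy.

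The heart of the matter is \ref{GdeltaCoveringGame} \(\Rightarrow\) \ref{GdeltaCoveringSequence}, and this is where the \(G_\delta\) hypothesis enters. Fix a winning strategy \(\sigma\) for One. For each \(A \in \mathcal A\), use that \(A\) is \(G_\delta\) to fix open sets \(\{W_m(A) : m \in \omega\}\) with \(A = \bigcap_m W_m(A)\); the key observation is that each \(W_m(A)\) is a legal response for Two to the move \(\mathscr N(A)\), and these countably many responses shrink down to \(A\). I would then run \(\sigma\) against all of these responses at once, building a tree of partial plays indexed by \(\omega^{<\omega}\): set \(A_{\langle\rangle} = \sigma(\emptyset)\), and, having defined One's move \(A_s\) at node \(s\), let Two answer with \(W_m(A_s)\) for each \(m \in \omega\) and let \(A_{s {}^\frown m}\) be the move dictated by \(\sigma\) in the play where Two has answered \(A_{s \restriction j}\) with \(W_{s(j)}(A_{s \restriction j})\) for \(j < |s|\) and then \(A_s\) with \(W_m(A_s)\). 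Because the tree branches only countably at each node, the family \(\{A_s : s \in \omega^{<\omega}\}\) is a countable subfamily of \(\mathcal A\), and I claim it covers \(X\), which is exactly \ref{GdeltaCoveringSequence}.

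To verify the claim I argue by contradiction: if some \(x \in X\) lies in no \(A_s\), I build a branch of the tree along which Two defeats \(\sigma\). Having reached \(s = b \restriction k\), we have \(x \notin A_s = \bigcap_m W_m(A_s)\), so there is some \(m_k\) with \(x \notin W_{m_k}(A_s)\); I set \(b(k) = m_k\) and let Two play \(W_{m_k}(A_s)\). Along the resulting branch \(b\), every open set played by Two omits \(x\), so their union is not all of \(X\); hence this play lies in \(\neg \mathcal O_X\) and Two wins against \(\sigma\), contradicting that \(\sigma\) is winning for One. Therefore no such \(x\) exists and \(\{A_s : s \in \omega^{<\omega}\}\) covers \(X\).

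I expect the main obstacle to be exactly this last implication, and the decisive use of the \(G_\delta\) assumption is what resolves it: it supplies, for each set One could play, a \emph{countable} family of legal neighborhood-responses whose intersection is that set, so the tree of plays generated by \(\sigma\) is countably branching and One therefore makes only countably many distinct moves across the whole tree. Without a countable approximating family the analogous tree could branch uncountably, and the extracted subfamily of \(\mathcal A\) would fail to be countable, so the conclusion \ref{GdeltaCoveringSequence} would break down; this is precisely the point at which the generalization of Telg{\'{a}}rsky's result is anchored to the \(G_\delta\) structure.
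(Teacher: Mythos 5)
Your proposal is correct and follows essentially the same route as the paper: the two easy implications are handled identically, and your countably-branching tree of plays indexed by \(\omega^{<\omega}\), with the diagonal branch defeating \(\sigma\) if some point were uncovered, is exactly the paper's construction of the sets \(T_n\) of partial plays and the subsequent contradiction. No gaps.
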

\begin{proof}
    \ref{GdeltaCoveringGame} \(\Rightarrow\) \ref{GdeltaCoveringSequence}:
	For each \(A \in \mathcal A\), let \(\mathcal U_A\) be a countable family of descending open sets so that \(A = \bigcap \mathcal U_A\).
	Let \(\sigma\) be a winning strategy for One.
	We will recursively define collections partial plays of \(G_1(\mathscr N[\mathcal A],\neg \mathcal O_X)\) which are being played according to \(\sigma\).
	\begin{itemize}
		\item
		\(T_0 = \emptyset\).
		\item
		Given \(n \in \omega\), define
		\[
			T_{n+1} = \{ w \overset{\frown}{} \langle \sigma(w), U \rangle : w \in T_n \text{ and } U \in \mathcal U_{\sigma(w)} \}.
		\]
	\end{itemize}
	Notice that each \(T_n\) is countable so
	\[
		W := \bigcup_{n\in\omega} \bigcup_{w \in T_n} \sigma(w)
	\]
	is a countable union of sets from \(\mathcal A\).

	To finish, we show that \(X = W\).
	Toward this end, suppose that, by way of contradiction, there exists \(x \in X \setminus W\).
	Then
	\begin{itemize}
		\item
		\(x \not\in A_0 := \sigma(\emptyset)\) and we can find \(U_0 \in \mathcal U_{A_0}\) so that \(x \not\in U_0\).
		\item
		Suppose we have
		\[
			w = \langle A_0, U_0, A_1, U_1, \ldots, A_n, U_n \rangle \in T_{n+1}
		\]
		defined.
		Then \(x \not\in A_{n+1} := \sigma(w)\) whence we can find \(U_{n+1} \in \mathcal U_{A_{n+1}}\) so that \(x \not\in U_{n+1}\).
	\end{itemize}
	Continuing in this way, we produce a run of the game which contradicts that \(\sigma\) is winning for One.
	Therefore, \(X = W\).

	\ref{GdeltaCoveringSequence} \(\Rightarrow\) \ref{GdeltaCoveringpreGame}:
	Suppose \(X = \bigcup_n A_n\).
	Define \(\sigma\) so that at round \(n\), \(\sigma\) will play \(\mathscr N(A_n)\).
	This is obviously a pre-determined winning strategy.

    \ref{GdeltaCoveringpreGame} \(\Rightarrow\) \ref{GdeltaCoveringGame}:
    Obvious.
\end{proof}

\begin{cor}
	\label{cor:PointOpenCountable}
	Suppose \(X\) is so that each singleton set is a \(G_\delta\).
	Then \(\text{I} \uparrow G_1(\mathscr N[X], \neg\mathcal O_X)\) (the point-open game) if and only if \(X\) is countable.
\end{cor}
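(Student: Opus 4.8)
The plan is to derive this corollary directly from Theorem~\ref{thm:GdeltaCovering} by taking the collection $\mathcal A$ to be the singletons of $X$. Since each singleton is assumed to be a $G_\delta$ set, the hypothesis of Theorem~\ref{thm:GdeltaCovering} is satisfied with $\mathcal A = \{\{x\} : x \in X\}$. Under the standing identification in the paper, $\mathscr N[\mathcal A] = \mathscr N[X]$, so the game $G_1(\mathscr N[\mathcal A], \neg \mathcal O_X)$ is precisely the point-open game $G_1(\mathscr N[X], \neg \mathcal O_X)$.

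With this substitution, Theorem~\ref{thm:GdeltaCovering} gives that $\text{I} \uparrow G_1(\mathscr N[X], \neg \mathcal O_X)$ holds if and only if there is a sequence $\{A_n : n \in \omega\} \subseteq \mathcal A$ with $X = \bigcup_n A_n$. The key observation is that a covering of $X$ by countably many singletons is exactly an enumeration of $X$ (possibly with repetitions), which exists if and only if $X$ is countable. So I would state this equivalence explicitly: condition~\ref{GdeltaCoveringSequence} of the theorem, specialized to singletons, is the literal assertion that $X$ is countable.

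Concretely, I would write that if $X = \{x_n : n \in \omega\}$ is countable, then $A_n = \{x_n\}$ furnishes the required sequence; conversely, if $\{A_n : n \in \omega\}$ covers $X$ with each $A_n$ a singleton, then $X$ is the union of countably many singletons and hence countable. Combining this with the theorem's equivalence of~\ref{GdeltaCoveringGame} and~\ref{GdeltaCoveringSequence} yields the corollary.

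I do not anticipate a genuine obstacle here, since the corollary is essentially a specialization of the preceding theorem. The only points requiring care are the bookkeeping ones: confirming that singletons indeed form an admissible collection $\mathcal A$ (guaranteed by the $G_\delta$ hypothesis), and correctly invoking the identification of $\mathscr N[\{x\}] = \mathscr N_x$ with the point-open game, so that the game named in the corollary matches the game appearing in the theorem. The remaining content is just recognizing that ``covered by countably many singletons'' is synonymous with ``countable.''
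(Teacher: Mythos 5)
Your proposal is correct and is exactly the intended derivation: the paper states this corollary without proof immediately after Theorem~\ref{thm:GdeltaCovering}, precisely because it is the specialization of that theorem to $\mathcal A = \{\{x\} : x \in X\}$, with the observation that a countable cover by singletons is the same as an enumeration of $X$. Your bookkeeping on $\mathscr N[\mathcal A] = \mathscr N[X]$ and the $G_\delta$ hypothesis is also right.
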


\begin{cor}
	\label{cor:CompactOpenSigma}
	Suppose \(X\) is so that each compact set is a \(G_\delta\).
	Then \(\text{I} \uparrow G_1(\mathscr N[K(X)], \neg \mathcal O_X)\) (the compact-open game) if and only if \(X\) is \(\sigma\)-compact.
\end{cor}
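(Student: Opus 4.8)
The plan is to recognise this as an immediate specialisation of Theorem \ref{thm:GdeltaCovering}. I would take $\mathcal A = K(X)$, the family of all non-empty compact subsets of $X$. By hypothesis every compact set is a $G_\delta$, so $\mathcal A$ is a collection of $G_\delta$ subsets of $X$ and the theorem applies verbatim. No new construction is needed; the work is entirely in reading off what the three equivalent conditions say under this choice of $\mathcal A$.

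With $\mathcal A = K(X)$, condition \ref{GdeltaCoveringGame} of Theorem \ref{thm:GdeltaCovering} becomes $\text{I} \uparrow G_1(\mathscr N[K(X)], \neg \mathcal O_X)$, which is exactly the assertion that One has a winning strategy in the compact-open game, by the earlier identification of that game. Condition \ref{GdeltaCoveringSequence} becomes: there is a sequence $\{K_n : n \in \omega\} \subseteq K(X)$ with $X = \bigcup_n K_n$. Thus the only content of the corollary is to observe that condition \ref{GdeltaCoveringSequence} is precisely the definition of $X$ being $\sigma$-compact, and then to invoke the equivalence \ref{GdeltaCoveringGame} $\Leftrightarrow$ \ref{GdeltaCoveringSequence} supplied by the theorem.

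The one point worth a moment's care is the bookkeeping around the non-emptiness convention: $K(X)$ contains only non-empty compact sets, whereas $\sigma$-compactness merely asks for a countable cover by compact (possibly empty) sets. Since deleting empty members of a cover changes neither its union nor the property, and since any non-empty space already admits non-empty compact subsets (for instance singletons), the two formulations coincide. Consequently there is no substantive obstacle here: once $\mathcal A = K(X)$ is substituted into Theorem \ref{thm:GdeltaCovering}, the result follows directly, and the proof reduces to citing that theorem.
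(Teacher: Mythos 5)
Your proposal is correct and matches the paper's intent exactly: the corollary is stated without proof precisely because it is the specialisation of Theorem \ref{thm:GdeltaCovering} to $\mathcal A = K(X)$, with condition \ref{GdeltaCoveringSequence} read off as $\sigma$-compactness. Your remark about the non-emptiness convention is a harmless and accurate piece of bookkeeping.
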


\begin{thm}
    \label{thm:GdeltaSuperCovering}
    Let \(\mathcal A\) be a collection of \(G_\delta\) subsets of \(X\).
    Then the following are equivalent:
    \begin{enumerate}[label=(\alph*)]
        \item \label{GdeltaSuperCoveringGame}
        \(\text{I} \uparrow G_1(\mathscr N[\mathcal A], \neg \mathcal O(X, \mathcal A))\)
        \item \label{GdeltaSuperCoveringSequence}
        there is a sequence \(\{A_n : n \in \omega\} \subseteq \mathcal A\) with the property that for each \(A \in \mathcal A\), there exists an \(n\) so that \(A \subseteq A_n\) and \(X = \bigcup_n A_n\)
        \item \label{GdeltaSuperCoveringpreGame}
        \(\text{I} \underset{\text{pre}}{\uparrow} G_1(\mathscr N[\mathcal A], \neg \mathcal O(X, \mathcal A))\)
    \end{enumerate}
\end{thm}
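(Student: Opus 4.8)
The plan is to mirror the proof of Theorem \ref{thm:GdeltaCovering} almost verbatim, inserting one new verification to account for the stronger winning condition \(\mathcal O(X,\mathcal A)\) in place of \(\mathcal O_X\). I would establish the cycle \ref{GdeltaSuperCoveringGame} \(\Rightarrow\) \ref{GdeltaSuperCoveringSequence} \(\Rightarrow\) \ref{GdeltaSuperCoveringpreGame} \(\Rightarrow\) \ref{GdeltaSuperCoveringGame}, where the last implication is immediate since a predetermined winning strategy is in particular a winning strategy.

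For \ref{GdeltaSuperCoveringGame} \(\Rightarrow\) \ref{GdeltaSuperCoveringSequence}, I fix a winning strategy \(\sigma\) for One and, exactly as before, for each \(A \in \mathcal A\) I choose a descending countable family \(\mathcal U_A\) of open sets with \(A = \bigcap \mathcal U_A\), so that every member of \(\mathcal U_A\) lies in \(\mathscr N(A)\). I build the countable tree \(T_n\) of partial \(\sigma\)-plays in which Two is restricted to answering One's move \(A = \sigma(w)\) by some \(U \in \mathcal U_{A}\), and I let \(\{A_n : n \in \omega\}\) enumerate all the sets One is made to play along this tree; this is a countable subfamily of \(\mathcal A\). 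It then remains to check the two clauses of \ref{GdeltaSuperCoveringSequence}.

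The clause \(X = \bigcup_n A_n\) is proved exactly as in Theorem \ref{thm:GdeltaCovering}: if some \(x\) were omitted, then at every node One's move \(A = \sigma(w)\) satisfies \(x \notin A = \bigcap \mathcal U_A\), so Two can descend through \(\mathcal U_A\) to an open set avoiding \(x\); following that branch yields a \(\sigma\)-play whose open sets miss \(x\), hence do not cover \(X\) and so are not in \(\mathcal O(X,\mathcal A)\), contradicting that \(\sigma\) wins. The new clause — that every \(A^{*} \in \mathcal A\) is contained in some \(A_n\) — is the crux, and I handle it by the same branch-following device applied to a \emph{witness point} rather than to an omitted point. If \(A^{*} \not\subseteq A_n\) for all \(n\), then along any branch, whenever One plays \(A = \sigma(w)\) we have \(A^{*} \not\subseteq A\), so I may pick \(y \in A^{*} \setminus A\); since \(y \notin A = \bigcap \mathcal U_A\), some \(U \in \mathcal U_A\) omits \(y\), and then \(A^{*} \not\subseteq U\) because \(y \in A^{*} \setminus U\). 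Following the branch on which Two always makes such a choice produces a \(\sigma\)-play in which no open set played by Two contains \(A^{*}\), so the resulting sequence lies outside \(\mathcal O(X,\mathcal A)\) and Two wins — again contradicting that \(\sigma\) is winning. Hence both clauses hold.

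For \ref{GdeltaSuperCoveringSequence} \(\Rightarrow\) \ref{GdeltaSuperCoveringpreGame}, I let \(\sigma\) be the predetermined strategy that plays \(\mathscr N(A_n)\) in round \(n\). Any response sequence \(\{U_n\}\) of Two satisfies \(A_n \subseteq U_n\), so \(X = \bigcup_n A_n \subseteq \bigcup_n U_n\), and for any \(A \in \mathcal A\), choosing \(n\) with \(A \subseteq A_n \subseteq U_n\) shows some member of \(\{U_n\}\) contains \(A\); thus \(\{U_n\} \in \mathcal O(X,\mathcal A)\) and One wins. I expect the only genuine obstacle to be the cofinality clause in \ref{GdeltaSuperCoveringSequence}; the key insight is that the \(G_\delta\) presentation \(A = \bigcap \mathcal U_A\) separates from \(A\) not only points omitted by \(\bigcup_n A_n\) but also any witness \(y \in A^{*} \setminus A\), which is precisely what converts the failure ``\(A^{*}\) is below no \(A_n\)'' into a Two-winning branch. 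Everything else is a transcription of the argument for Theorem \ref{thm:GdeltaCovering}.
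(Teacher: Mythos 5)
Your proposal is correct and follows essentially the same route as the paper: the same countable tree of partial \(\sigma\)-plays in which Two is restricted to the descending \(G_\delta\) presentations \(\mathcal U_A\), with the cofinality clause obtained by descending a branch along witnesses \(y \in A^{*}\setminus \sigma(w)\) exactly as in the paper's argument. If anything you are slightly more complete, since you explicitly verify the clause \(X = \bigcup_n A_n\), which the paper's proof of Theorem \ref{thm:GdeltaSuperCovering} leaves to the reader as a repetition of the argument for Theorem \ref{thm:GdeltaCovering}.
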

\begin{proof}
    \ref{GdeltaSuperCoveringGame} \(\Rightarrow\) \ref{GdeltaSuperCoveringSequence}:
	For each \(A \in \mathcal A\), let \(\mathcal U_A\) be a countable family of descending open sets so that \(A = \bigcap \mathcal U_A\).
	Let \(\sigma\) be a winning strategy for One and, without loss of generality, suppose \(\sigma\) is producing elements of \(\mathcal A\).
	We will recursively define collections partial plays of \(G_1(\mathscr N[\mathcal A], \neg\mathcal O(X, \mathcal A))\) which are being played according to \(\sigma\).
	\begin{itemize}
		\item
		\(T_0 = \emptyset\).
		\item
		Given \(n \in \omega\), define
		\[
			T_{n+1} = \{ w \overset{\frown}{} \langle \sigma(w), U \rangle : w \in T_n \text{ and } U \in \mathcal U_{\sigma(w)} \}.
		\]
	\end{itemize}
	Notice that each \(T_n\) is countable so
	\[
		\mathscr F := \bigcup_{n\in\omega} \{ \sigma(w) : w \in T_n \}
	\]
	is a countable family of sets from \(\mathcal A\).

	Suppose, toward a contradiction, that \(A \in \mathcal A\) is so that
	\[
		(\forall F \in \mathscr F)(A \not\subseteq F).
	\]
	Then
	\begin{itemize}
		\item
		for \(A_0 = \sigma(\emptyset)\), we can find \(x_0 \in A \setminus A_0\) and \(U_0 \in \mathcal U_{K_0}\) so that \(x_0 \not\in U_0\).
		\item
		Suppose we have
		\[
			w = \langle A_0, U_0, A_1, U_1, \ldots, A_n, U_n \rangle \in T_{n+1}
		\]
		defined.
		Then, for \(A_{n+1} = \sigma(w)\), we can find \(x_{n+1} \in A \setminus A_{n+1}\) and \(U_{n+1} \in \mathcal U_{K_{n+1}}\) so that \(x_{n+1} \not\in U_{n+1}\).
	\end{itemize}
	Continuing in this way, we obtain a full play
	\[
		A_0 , U_0 , A_1 , U_1 , \ldots
	\]
	of \(G_1(\mathscr N[K(X)], \neg\mathcal K_X)\) according to \(\sigma\).
	That is, there must be some \(n \in \omega\) so that \(A \subseteq U_n\), contradicting \(x_n \not\in U_n\).

	\ref{GdeltaSuperCoveringSequence} \(\Rightarrow\) \ref{GdeltaSuperCoveringpreGame}:
	Suppose \(\{A_n : n \in \omega\} \subseteq \mathcal A\) has the property that for each \(A \in \mathcal A\), there exists an \(n\) so that \(A \subseteq A_n\) and \(X = \bigcup_n A_n\).
	Define \(\sigma\) so that at round \(n\), \(\sigma\) will play \(\mathscr N(A_n)\).
	This is obviously a pre-determined winning strategy.

	\ref{GdeltaSuperCoveringpreGame} \(\Rightarrow\) \ref{GdeltaSuperCoveringGame}:
	Obvious.
\end{proof}

\begin{cor}
	\label{cor:kCompactOpenHemi}
	Suppose \(X\) is so that each compact set is a \(G_\delta\).
	Then \(\text{I} \uparrow G_1(\mathscr N[K(X)], \neg\mathcal K_X)\) if and only if \(X\) is hemicompact.
\end{cor}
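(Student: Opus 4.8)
The plan is to obtain Corollary~\ref{cor:kCompactOpenHemi} as the instance of Theorem~\ref{thm:GdeltaSuperCovering} in which one takes \(\mathcal A = K(X)\), the family of all non-empty compact subsets of \(X\). The standing hypothesis that every compact set is a \(G_\delta\) is exactly what is needed for \(K(X)\) to be a collection of \(G_\delta\) subsets of \(X\), so the theorem applies to this \(\mathcal A\) without modification. Thus the whole argument reduces to translating the three equivalent conditions of the theorem into the language of the corollary.

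The single identification required is that \(\mathcal O(X, K(X)) = \mathcal K_X\). This is pure definition-unwinding: by definition an open cover \(\mathscr U\) lies in \(\mathcal O(X, K(X))\) precisely when for every compact \(K \subseteq X\) there is some \(U \in \mathscr U\) with \(K \subseteq U\), and that is verbatim the defining property of a \(k\)-cover. Hence \(\neg\,\mathcal O(X, K(X)) = \neg\,\mathcal K_X\), and the game in item~\ref{GdeltaSuperCoveringGame} of Theorem~\ref{thm:GdeltaSuperCovering} becomes exactly \(G_1(\mathscr N[K(X)], \neg\mathcal K_X)\).

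With this substitution, Theorem~\ref{thm:GdeltaSuperCovering} says that \(\text{I} \uparrow G_1(\mathscr N[K(X)], \neg\mathcal K_X)\) holds if and only if the condition in item~\ref{GdeltaSuperCoveringSequence} holds, namely that there is a sequence \(\{K_n : n \in \omega\}\) of compact sets such that every compact \(K \subseteq X\) satisfies \(K \subseteq K_n\) for some \(n\), and moreover \(X = \bigcup_n K_n\). The cofinality clause is literally the definition of hemicompactness, so it remains only to see that the coverage clause \(X = \bigcup_n K_n\) adds nothing: for any \(x \in X\) the singleton \(\{x\}\) is compact, hence contained in some \(K_n\), which forces \(x \in K_n\) and therefore \(X = \bigcup_n K_n\). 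Thus item~\ref{GdeltaSuperCoveringSequence} collapses precisely to ``\(X\) is hemicompact,'' and the corollary follows. I expect no real obstacle here; the only point deserving a moment's attention is this last observation that the extra requirement \(X = \bigcup_n K_n\) does not strengthen hemicompactness, which holds exactly because singletons are compact.
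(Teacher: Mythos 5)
Your proposal is correct and is exactly the derivation the paper intends: the corollary is stated immediately after Theorem~\ref{thm:GdeltaSuperCovering} with no separate proof, precisely because it is the instance \(\mathcal A = K(X)\) together with the identification \(\mathcal O(X,K(X)) = \mathcal K_X\). Your added remark that the clause \(X = \bigcup_n K_n\) is subsumed by cofinality (since singletons are compact) is a correct and worthwhile detail that the paper leaves implicit.
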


\begin{example}
    Corollaries \ref{cor:CompactOpenSigma} and \ref{cor:kCompactOpenHemi} demonstrate that \(G_1(\mathscr N[K(X)],\neg \mathcal O_X)\) and \(G_1(\mathscr N[K(X)],\neg \mathcal K_X)\) are different games since \(\mathbb Q\) is \(\sigma\)-compact but not hemicompact.
    Explicitly, \(\text{I} \uparrow G_1(\mathscr N[K(\mathbb Q)] ,\neg \mathcal O_{\mathbb Q})\) but \(\text{I} \not\uparrow G_1(\mathscr N[K(\mathbb Q)],\neg \mathcal K_{\mathbb Q})\).
\end{example}

The following is a generalization of V.416 from \cite[p. 460]{TkachukFE}.

\begin{lemma}
	\label{lem:CompactOpenCofinal}
	Let \(\mathcal A\) be a family of sets closed under finite unions.
	Then \(\text{I} \uparrow G_1(\mathscr N[\mathcal A], \neg\mathcal O(X,\mathcal A))\) if and only if \(\text{I} \uparrow G_1(\mathscr N[\mathcal A],\neg\Gamma(X,\mathcal A))\).
\end{lemma}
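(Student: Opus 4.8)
The plan is to prove the two implications separately, with only the converse using closure under finite unions.

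For the direction \(\text{I} \uparrow G_1(\mathscr N[\mathcal A],\neg\Gamma(X,\mathcal A)) \Rightarrow \text{I} \uparrow G_1(\mathscr N[\mathcal A],\neg\mathcal O(X,\mathcal A))\), I would simply observe that \(\Gamma(X,\mathcal A) \subseteq \mathcal O(X,\mathcal A)\): a \(\Gamma(X,\mathcal A)\)-cover is infinite, so for each \(A \in \mathcal A\) all but finitely many of its members contain \(A\), and in particular at least one does. Hence \(\neg\mathcal O(X,\mathcal A) \subseteq \neg\Gamma(X,\mathcal A)\), and any strategy for One that forces Two's selections into \(\Gamma(X,\mathcal A)\) a fortiori forces them into \(\mathcal O(X,\mathcal A)\). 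This direction needs no hypothesis on \(\mathcal A\).

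For the converse, I would transform a winning strategy \(\sigma\) for One in the \(\mathcal O\)-game into a winning strategy for One in the \(\Gamma\)-game. The device is to let One maintain an internal simulation of \(\sigma\) and, externally, play the running finite union of the moves \(\sigma\) has suggested so far. Closure under finite unions keeps these external moves inside \(\mathcal A\) (so they are legal), and since any open set containing a finite union contains each summand, Two's external responses can be fed back into the simulation as legal responses to \(\sigma\)'s individual moves; thus the internal record remains a genuine play according to \(\sigma\), and its responses form an element of \(\mathcal O(X,\mathcal A)\). The payoff of making One's external moves increasing is that coverage becomes monotone: once a set \(A \in \mathcal A\) is contained in one of One's external moves, it is contained in every later one, hence in every later response. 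So if One's increasing moves are cofinal in \((\mathcal A,\subseteq)\), then every \(A\) is contained in cofinitely many responses and the responses form a \(\gamma\)-cover, i.e.\ lie in \(\Gamma(X,\mathcal A)\).

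The step I expect to be the main obstacle is precisely guaranteeing that One's increasing moves become cofinal, i.e.\ upgrading the \(\mathcal O\)-game guarantee ``each \(A\) is covered at least once'' to ``each \(A\) is eventually always covered''. Running \(\sigma\) in countably many parallel copies only boosts the former to ``each \(A\) is covered infinitely often'' --- a \(\Lambda(X,\mathcal A)\)-cover --- which is still weaker than a \(\gamma\)-cover, so the passage from infinitely-often to cofinitely-often is the crux, and it is here that closure under finite unions is indispensable: it is what lets One arrange an increasing (rather than merely cofinal) sequence of moves along which coverage persists. To secure cofinality I would argue from \(\sigma\) being winning: if some \(A^\star\) remained outside every move One could ever produce, an adversarial Two could always respond with an open set avoiding a point of \(A^\star\), so \(A^\star\) would never be covered, contradicting that \(\sigma\) forces a member of \(\mathcal O(X,\mathcal A)\). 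As an alternative packaging, Corollary \ref{cor:GeneralGalvin} identifies these two One-games with the player-Two selection games \(G_1(\mathcal O(X,\mathcal A),\mathcal O(X,\mathcal A))\) and \(G_1(\mathcal O(X,\mathcal A),\Gamma(X,\mathcal A))\); proving the equivalence of Two's winning strategies there is a Scheepers-type statement provable by the same finite-union fusion, with \(\Lambda(X,\mathcal A)\) serving as a natural intermediate class.
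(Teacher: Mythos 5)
Your easy direction is correct: $\Gamma(X,\mathcal A)\subseteq\mathcal O(X,\mathcal A)$, so $\neg\mathcal O(X,\mathcal A)\subseteq\neg\Gamma(X,\mathcal A)$ and the implication is immediate. The hard direction, however, has a genuine gap, and it sits exactly where you predicted. Playing the running union $B_n=\bigcup_{i\leq n}A_i$ of a single internal simulation of the given strategy $s$ only guarantees that each $A\in\mathcal A$ lies in \emph{some} response $V_n$; your proposed upgrade --- arranging that One's increasing moves be cofinal in $(\mathcal A,\subseteq)$ --- cannot work. First, the adversarial-Two argument you give to secure cofinality requires Two to find an open $V_n\supseteq B_n$ missing a point of $A^\star\setminus B_n$; the lemma carries no separation hypothesis (contrast Theorem \ref{thm:StrongPreCovering}, which assumes exactly this), and even granting separation the argument produces a different run for each $A^\star$ rather than cofinality within a single run. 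Second, cofinality of One's moves is simply false in the intended applications: for $X=L(\omega_1)$ and $\mathcal A=K(X)$ (where every compact set is finite), the union of One's countably many moves is countable, so they can never be cofinal in $K(X)$, yet $\text{I}\uparrow G_1(\mathscr N[K(X)],\neg\mathcal K_X)$ holds there and the lemma must apply.

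The missing idea in the paper's proof is that One need not absorb each $A$ into his own moves; he only needs every \emph{subsequence} of Two's responses to be a legal run against the original strategy $s$. Concretely, at stage $n+1$ One plays the union, over all increasing tuples $0\leq j_0<\cdots<j_k\leq n$, of the moves $s$ would make against the partial play $U_{j_0},\dots,U_{j_k}$ (together with the sets already associated to that partial play) --- this is precisely where closure under finite unions is used, and it makes every finite subsequence of Two's responses ``adequate.'' Then any infinite subsequence $\{U_{j_n}:n\in\omega\}$ of Two's responses is itself a full run against $s$, hence lies in $\mathcal O(X,\mathcal A)$; if some $A$ were omitted from infinitely many $U_m$, the subsequence of those $U_m$ would be an $\mathcal O(X,\mathcal A)$-cover omitting $A$, a contradiction. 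Your parallel-copies idea points in the right direction but diagonalizes over too small a family: countably many independent copies of $s$ only yields $\Lambda(X,\mathcal A)$, as you note, whereas the diagonalization has to range over all finite subsequences of the evolving play.
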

\begin{proof}
	Let \(s\) be a winning strategy for One in \(G_1(\mathscr N[\mathcal A], \neg\mathcal O(X,\mathcal A))\).
	We call a finite tuple \(\langle U_0 , U_1 , \ldots , U_n \rangle\) of open subsets of \(X\) \emph{adequate} if there exist \(A_j \in \mathcal A\), \(0 \leq j \leq n\), so that
	\[
		\langle A_0 , U_0 , A_1 , U_1 , \ldots , A_n \rangle \in s
	\]
	and \(A_n \subseteq U_n\).
	Also, we will call a sequence \(\{ U_n : n \in \omega \}\) of open sets \emph{adequate} if, for each \(n \in \omega\), \(\langle U_0 , U_1 , \ldots , U_n \rangle\) is adequate.

	Now, suppose \(\{ U_n : n \in \omega \}\) is adequate and let \(n \in \omega\).
	Then, let \(A_0 , A_1 , \ldots , A_n \in \mathcal A\) so that
	\[
		\langle A_0 , U_0 , A_1 , U_1 , \ldots , A_n \rangle \in s
	\]
	and \(A_n \subseteq U_n\).
	Notice that \(A_0 = s(\emptyset)\).
	Consider
	\[
		A_{n+1} := s(A_0 , U_0 , A_1 , U_1 , \ldots , A_n , U_n)
	\]
	and let \(A'_0 , A'_1 , \ldots , A'_{n+1} \in \mathcal A\) be so that
	\[
		\langle A'_0 , U_0 , A'_1 , U_1 , \ldots , A'_{n+1} \rangle \in s
	\]
	and \(A'_{n+1} \subseteq U_{n+1}\).
	Again, observe that \(A'_0 = s(\emptyset) = A_0\).

	To see that \(A_j = A'_j\) for all \(j \leq n+1\), fix \(\ell \geq 0\) and suppose that we've shown \(A_j = A'_j\) for all \(j \leq \ell\).
	Then, since we have been playing according to \(s\), we see that
	\[
		\begin{array}{rcl}
			A_{\ell+1}
			&=& s(A_0 , U_0 , A_1 , U_1 , \ldots , A_\ell , U_\ell)\\
			&=& s(A'_0 , U_0 , A'_1 , U_1 , \ldots , A'_\ell , U_\ell)\\
			&=& A'_{\ell+1}.
		\end{array}
	\]
	It follows that \(\{U_n:n\in\omega\}\) arises as the sequence of Two's plays in a full run of the game \(G_1(\mathscr N[\mathcal A], \neg\mathcal O(X,\mathcal A))\) according to \(s\).
	In particular, if \(\{U_n : n \in \omega\}\) is an adequate family, \(\{U_n : n \in \omega\} \in \mathcal A\).

	For any adequate sequence \(\langle U_0 , U_1 , \ldots , U_n\rangle\), let \(w(U_0 , U_1 , \ldots , U_n) = \langle A_0 , A_1 , \ldots , A_n \rangle\) be so that
	\[
		\langle A_0 , U_0 , \ldots , A_n , U_n \rangle
	\]
	is a play according to \(s\) and define
	\[
		\gamma(U_0 , U_1 , \ldots , U_n) = s( A_0 , U_0 , \ldots , A_n , U_n ) \cup \bigcup_{j=0}^n A_j.
	\]
	Observe that \(\gamma(U_0 , U_1 , \ldots , U_n) \in \mathcal A\) as it is a finite union of sets from \(\mathcal A\).

	Now we will define a new strategy \(\sigma\) and we start with \(\sigma(\emptyset) = A_0 = s(\emptyset)\).
	Suppose that we have defined
	\[
		\langle A_0 , U_0 , A_1 , U_1 , \ldots , A_{n-1} , U_{n-1} , A_n \rangle \in \sigma
	\]
	for \(n \geq 0\) so that, for a fixed open set \(U_n\) with \(A_n \subseteq U_n\), we have
	\begin{enumerate}[label=(\roman*)]
		\item \label{PieceTogetherA}
		for any \(0 \leq j_0 < j_1 < \cdots < j_k \leq n\), \(\langle U_{j_0} , U_{j_1} , \ldots , U_{j_k} \rangle\) is an adequate sequence and
		\item \label{PieceTogetherB}
		for any \(0 \leq j_0 < j_1 < \cdots < j_k \leq \ell < n\), \(\gamma( U_{j_0} , U_{j_1} , \ldots , U_{j_k} ) \subseteq A_{\ell+1}\).
	\end{enumerate}
	Define
	\[
		A_{n+1} = \bigcup \{ \gamma(U_{j_0} , U_{j_1} , \ldots , U_{j_k}) : 0 \leq j_0 < j_1 < \cdots < j_k \leq n \}
	\]
	which is a set in \(\mathcal A\), let
	\[
		\langle A_0 , U_0 , A_1 , U_1 , \ldots , A_{n-1} , U_{n-1} , A_n , U_n , A_{n+1} \rangle \in \sigma,
	\]
	and fix an open set \(U_{n+1}\) with \(A_{n+1} \subseteq U_{n+1}\).

	To address \ref{PieceTogetherA}, let \(0 \leq j_0 < j_1 < \cdots < j_{k-1} < j_k = n + 1\).
	Notice that \(\langle U_{j_0} , U_{j_1} , \ldots , U_{j_{k-1}} \rangle\) is adequate by the inductive hypothesis so let
	\[
		\langle B_{j_0} , B_{j_1} , \ldots , B_{j_{k-1}} \rangle = w(U_{j_0} , U_{j_1} , \ldots , U_{j_{k-1}})
	\]
	and
	\[
		B_{j_k} = s(B_{j_0} , U_{j_0} , B_{j_1} , U_{j_1} , \ldots , B_{j_{k-1}} , U_{j_{k-1}}).
	\]
	It follows that
	\[
		B_{j_k} \subseteq \gamma(U_{j_0} , U_{j_1} , \ldots , U_{j_{k-1}}) \subseteq A_{n+1} \subseteq U_{n+1}.
	\]
	Hence,
	\[
		B_{j_0} , U_{j_0} , B_{j_1} , U_{j_1} , \ldots , B_{j_{k-1}} , U_{j_{k-1}} , B_{j_k} , U_{j_k}
	\]
	is a play according to \(s\); i.e., \(\langle U_{j_0} , U_{j_1} , \ldots , U_{j_k} \rangle\) is adequate.

	The fact that \ref{PieceTogetherB} holds for \(0 \leq j_0 < j_1 < \cdots < j_k \leq \ell < n + 1\) follows immediately from the definition of \(A_{n+1}\).
	So this finishes the construction of \(\sigma\).

	Suppose \(A_0 , U_0 , A_1 , U_1 , \ldots\) is a full run of the game \(G_1(\mathscr N[\mathcal A], \neg\mathcal O(X, \mathcal A))\) played according to \(\sigma\) and suppose, by way of contradiction, that there is \(A \in \mathcal A\) so that
	\[
		(\forall n \in \omega)(\exists m \geq n)(A \not\subseteq U_m).
	\]
	Then we can build a co-final sequence \(j_0 < j_1 < \cdots\) so that \(A \not\subseteq U_{j_n}\) for each \(n \in \omega\).

	By construction of \(\sigma\), \(\langle U_{j_0} , U_{j_1} , \ldots , U_{j_n} \rangle\) is adequate for each \(n \in \omega\) which means that \(\{ U_{j_n} : n \in \omega \}\) is adequate.
	By above, \(\{ U_{j_n} : n \in \omega\} \in \mathcal O(X,\mathcal A)\).
	In particular, there must be some \(n \in \omega\) so that \(A \subseteq U_{j_n}\), a contradiction.
	Therefore, \(\{U_n :n \in \omega\} \in \Gamma(X,\mathcal A)\), and \(\sigma\) is a winning strategy for One in \(G_1(\mathscr N[\mathcal A],\neg\Gamma(X,\mathcal A))\).
\end{proof}

\begin{cor}
    \label{cor:CompactOpenCofinal}
    We have the following strengthenings of strategies.
    \begin{enumerate}[label=(\roman*)]
        \item \label{Immediate}
        \(\text{I} \uparrow G_1(\mathscr N[K(X)], \neg \mathcal K_X)\) if and only if \(\text{I} \uparrow G_1(\mathscr N[K(X)] ,\neg \Gamma_k(X))\).
        \item \label{ImmediateTwo}
        \(\text{II} \uparrow G_1(\mathcal K_X, \mathcal K_X)\) if and only if \(\text{II} \uparrow G_1(\mathcal K_X, \Gamma_k(X))\).
        \item \label{ImmediatePre}
        \(\text{I} \underset{\text{pre}}{\uparrow} G_1(\mathscr N[K(X)],\neg \mathcal K_X)\) if and only if \(\text{I} \underset{\text{pre}}{\uparrow} G_1(\mathscr N[K(X)] , \neg\Gamma_k(X))\).
        \item \label{ImmediateMark}
        \(\text{II} \underset{\text{mark}}{\uparrow} G_1(\mathcal K_X, \mathcal K_X)\) if and only if \(\text{II} \underset{\text{mark}}{\uparrow} G_1(\mathcal K_X, \Gamma_k(X))\).
    \end{enumerate}
\end{cor}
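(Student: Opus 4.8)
The plan is to derive all four equivalences from the single-strategy version already established in Lemma~\ref{lem:CompactOpenCofinal}, together with the duality machinery of Corollaries~\ref{cor:kKODual} and~\ref{cor:GeneralGalvin}. The first observation is that \(K(X)\) is closed under finite unions and that, unwinding the definitions, \(\mathcal O(X, K(X)) = \mathcal K_X\) and \(\Gamma(X, K(X)) = \Gamma_k(X)\). Hence part~\ref{Immediate} is nothing more than Lemma~\ref{lem:CompactOpenCofinal} applied with \(\mathcal A = K(X)\).

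For part~\ref{ImmediatePre} I would argue the predetermined strengthening directly, which is more transparent than tracing through the full-information construction. The implication forcing a \(\gamma_k\)-cover to force a \(k\)-cover is immediate from \(\Gamma_k(X) \subseteq \mathcal K_X\). Conversely, suppose \(\langle K_n : n\in\omega\rangle\) is a predetermined winning strategy for One in \(G_1(\mathscr N[K(X)], \neg\mathcal K_X)\); that is, every choice of open \(V_n \supseteq K_n\) yields a \(k\)-cover \(\{V_n : n\in\omega\}\). Put \(L_n = \bigcup_{j\le n} K_j\), which is again compact. Given any response \(W_n \supseteq L_n\), I claim \(\{W_n : n\in\omega\}\) is a \(\gamma_k\)-cover: if some compact \(K\) failed to be contained in \(W_n\) for infinitely many \(n\), say along \(n_0 < n_1 < \cdots\), then setting \(V_m := W_{n_i}\) for the least \(i\) with \(n_i \ge m\) produces a legal play of the original game (since \(W_{n_i} \supseteq L_{n_i} \supseteq K_m\)) whose range omits covering \(K\), contradicting that \(\langle K_n\rangle\) forces a \(k\)-cover. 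Thus \(\langle L_n\rangle\) is a predetermined winning strategy in \(G_1(\mathscr N[K(X)], \neg\Gamma_k(X))\). This choice \(L_n = \bigcup_{j\le n}K_j\) is precisely what the construction in Lemma~\ref{lem:CompactOpenCofinal} collapses to when its input strategy is predetermined.

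Parts~\ref{ImmediateTwo} and~\ref{ImmediateMark} then follow by transporting parts~\ref{Immediate} and~\ref{ImmediatePre} across duality. By Corollary~\ref{cor:kKODual} the games \(G_1(\mathscr N[K(X)], \neg\mathcal K_X)\) and \(G_1(\mathcal K_X, \mathcal K_X)\) are dual, and by Corollary~\ref{cor:GeneralGalvin} (with \(\mathcal A = K(X)\) and \(\mathcal B = \Gamma_k(X)\), noting that \(\Gamma_k(X)\) is a family of open covers) the games \(G_1(\mathscr N[K(X)], \neg\Gamma_k(X))\) and \(G_1(\mathcal K_X, \Gamma_k(X))\) are dual. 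For part~\ref{ImmediateTwo} I chain the strategic-duality halves:
\[
\begin{aligned}
\text{II} \uparrow G_1(\mathcal K_X, \mathcal K_X)
&\iff \text{I} \uparrow G_1(\mathscr N[K(X)], \neg\mathcal K_X)\\
&\iff \text{I} \uparrow G_1(\mathscr N[K(X)], \neg\Gamma_k(X))\\
&\iff \text{II} \uparrow G_1(\mathcal K_X, \Gamma_k(X)),
\end{aligned}
\]
where the middle equivalence is part~\ref{Immediate}. For part~\ref{ImmediateMark} I run the identical chain with the Markov-duality halves (pairing \(\text{I} \underset{\text{pre}}{\uparrow}\) with \(\text{II} \underset{\text{mark}}{\uparrow}\)) and part~\ref{ImmediatePre} supplying the middle link.

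The only genuine content beyond bookkeeping is the forward direction of part~\ref{ImmediatePre}; the point to watch there is the usual non-triviality and infinitude caveat on the resulting \(\gamma_k\)-cover, which is handled exactly as in Lemma~\ref{lem:CompactOpenCofinal}. Everything else is a mechanical application of the lemma and the two duality corollaries.
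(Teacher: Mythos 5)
Your proposal is correct and follows essentially the same route as the paper: part \ref{Immediate} is read off from Lemma \ref{lem:CompactOpenCofinal} with \(\mathcal A = K(X)\), part \ref{ImmediatePre} is the union trick \(L_n = \bigcup_{j\le n}K_j\) (your explicit ``choose \(V_m = W_{n_i}\) for the least \(n_i \ge m\)'' contradiction is just a more careful writing of the paper's ``unraveling'' sketch), and parts \ref{ImmediateTwo} and \ref{ImmediateMark} are obtained by chaining Corollaries \ref{cor:kKODual} and \ref{cor:GeneralGalvin} through the corresponding One-player statements.
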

\begin{proof}
    \ref{Immediate} This is immediate from Lemma \ref{lem:CompactOpenCofinal}.

    \ref{ImmediateTwo} Apply \ref{Immediate} and Corollaries \ref{cor:GeneralGalvin} and \ref{cor:kKODual}.

    \ref{ImmediatePre} Note that if \(s\) is a pre-determined winning strategy for One in \(G_1(\mathscr N[K(X)], \neg\mathcal K_X)\), then
    \[
        \sigma(n) = \bigcup_{i \leq n}s(i)
    \]
    is a pre-determined winning strategy for One in \(G_1(\mathscr N[K(X)] ,\neg \Gamma_k(X))\).
    To see this, note that any cofinal play of the game according to \(\sigma\) can be unraveled into a play of the game according to \(s\) by having player One play each \(s(i)\) and having player Two play their response to \(\sigma(n)\) repeatedly against the \(s(i)\).

    \ref{ImmediateMark}
    Apply \ref{ImmediatePre} and Corollary \ref{cor:GeneralGalvin}.
\end{proof}

\begin{lemma}
	\(\text{I} \uparrow G_{\text{fin}}(\mathcal O_X,\mathcal O_X) \implies \text{I} \uparrow G_1(\mathcal K_X, \mathcal K_X)\).
\end{lemma}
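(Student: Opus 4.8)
The plan is to route this implication through the compact-open game and its dual rather than constructing a strategy for One in \(G_1(\mathcal K_X, \mathcal K_X)\) directly. The combinatorial heavy lifting has already been carried out: the Lemma immediately preceding this one converts a winning strategy for One in the Menger game \(G_{\text{fin}}(\mathcal O_X,\mathcal O_X)\) into a winning strategy for Two in \(G_1(\mathscr N[K(X)],\neg \mathcal O_X)\). I would reuse that output verbatim and then transfer it, first to the \(k\)-cover payoff and then across the duality of Corollary \ref{cor:kKODual}.

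First I would apply the preceding Lemma: assuming \(\text{I} \uparrow G_{\text{fin}}(\mathcal O_X,\mathcal O_X)\), we obtain a strategy \(\tau\) witnessing \(\text{II} \uparrow G_1(\mathscr N[K(X)],\neg \mathcal O_X)\). Under \(\tau\), Two answers One's compact set \(K_n\) with an open set \(U_n \supseteq K_n\) and forces the resulting family \(\{U_n : n \in \omega\}\) to fail to be an open cover of \(X\).

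The next step is a monotonicity observation. By definition every \(k\)-cover is an open cover, so \(\mathcal K_X \subseteq \mathcal O_X\), and therefore the payoff set \(\neg \mathcal O_X\) for Two is contained in the payoff set \(\neg \mathcal K_X\): a family that misses some point \(x\) of \(X\) in particular fails to contain the compact set \(\{x\}\) in any of its members, so it is not a \(k\)-cover. Consequently every run that \(\tau\) wins in \(G_1(\mathscr N[K(X)],\neg \mathcal O_X)\) is also won by Two as a run of \(G_1(\mathscr N[K(X)],\neg \mathcal K_X)\); the identical strategy \(\tau\) thus witnesses \(\text{II} \uparrow G_1(\mathscr N[K(X)],\neg \mathcal K_X)\).

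Finally I would invoke Corollary \ref{cor:kKODual}, which gives that \(G_1(\mathscr N[K(X)],\neg \mathcal K_X)\) and \(G_1(\mathcal K_X, \mathcal K_X)\) are dual; its strategic duality says in particular that \(\text{II} \uparrow G_1(\mathscr N[K(X)],\neg \mathcal K_X)\) is equivalent to \(\text{I} \uparrow G_1(\mathcal K_X, \mathcal K_X)\), which is exactly the desired conclusion. There is no deep obstacle here once the pieces are assembled; the one point demanding care is verifying that the inclusion \(\neg \mathcal O_X \subseteq \neg \mathcal K_X\) runs in the direction that \emph{preserves a win for Two} (not for One), which it does precisely because the \(k\)-covers form a subclass of the open covers.
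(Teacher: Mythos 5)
Your proof is correct, but it follows a genuinely different route from the paper's. The paper argues directly: from a winning strategy \(s\) for One in \(G_{\text{fin}}(\mathcal O_X,\mathcal O_X)\) it builds a strategy for One in \(G_1(\mathcal K_X,\mathcal K_X)\) by playing the \(k\)-covers \(\mathscr U^+ = \{\bigcup\mathcal F : \mathcal F \in [\mathscr U]^{<\omega}\}\) obtained from \(s\)'s open covers, decoding Two's chosen finite union back into a finite selection to feed to \(s\); since \(s\) wins, the selections never form a cover, so Two's sets are not even an open cover, let alone a \(k\)-cover. You instead chain three facts: the earlier lemma giving \(\text{II} \uparrow G_1(\mathscr N[K(X)],\neg\mathcal O_X)\), the payoff monotonicity \(\neg\mathcal O_X \subseteq \neg\mathcal K_X\) (valid, since under the paper's conventions every \(k\)-cover is an open cover, so the same strategy for Two transfers), and the duality of Corollary \ref{cor:kKODual}, whose second clause indeed converts \(\text{II} \uparrow G_1(\mathscr N[K(X)],\neg\mathcal K_X)\) into \(\text{I} \uparrow G_1(\mathcal K_X,\mathcal K_X)\). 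Each link is sound and there is no circularity, since neither the earlier lemma nor the duality corollary depends on this one. What the paper's version buys is a self-contained, explicit strategy built from elementary finite-subcover reasoning; what yours buys is brevity and modularity, at the cost of leaning on the black-box Clontz duality theorem and producing One's winning strategy only implicitly. One cosmetic slip: the lemma you invoke is not the one immediately preceding this statement in the paper (several covering-property results intervene), though it is clear which lemma you mean.
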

\begin{proof}
	Let \(s\) be a winning strategy for One in \(G_{\text{fin}}(\mathcal O_X,\mathcal O_X)\).
	For any open cover \(\mathscr U\), let
	\[
		\mathscr U^+ = \left\{ \bigcup \mathcal F : \mathcal F \in [\mathscr U]^{<\omega} \right\}.
	\]

	Now, we define \(\sigma\):
	Let \(\sigma(\emptyset) = \mathscr U_0 := s(\emptyset)^+\).
	Since \(s(\emptyset)\) is an open cover of \(X\), \(\mathscr U_0\) is a \(k\)-cover of \(X\).

	For \(n \in \omega\), suppose we have
	\[
		\langle \mathscr V_0 , \mathcal F_0 , \ldots, \mathscr V_{n-1} , \mathcal F_{n-1} , \mathscr V_n \rangle \in s
	\]
	and
	\[
		\langle \mathscr U_0 , U_0 , \ldots, \mathscr U_{n-1} , U_{n-1} , \mathscr U_n \rangle \in \sigma
	\]
	where \(\mathscr U_j = \mathscr V_j^+\) for each \(j \leq n\) and \(U_j = \bigcup \mathcal F_j\) for each \(j < n\).
	For any choice \(U_n \in \mathscr U_n\), there exists \(\mathcal F_n \in [\mathscr V_n]^{<\omega}\) so that \(U_n = \bigcup \mathcal F_n\).
	Then we obtain some open cover \(\mathscr V_{n+1}\) with
	\[
		\langle \mathscr V_0 , \mathcal F_0 , \ldots, \mathscr V_{n-1} , \mathcal F_{n-1} , \mathscr V_n , \mathcal F_n , \mathscr V_{n+1} \rangle \in s
	\]
	and we let \(\mathscr U_{n+1} = \mathscr V_{n+1}^+\) and
	\[
		\langle \mathscr U_0 , U_0 , \ldots, \mathscr U_{n-1} , U_{n-1} , \mathscr U_n , U_{n} , \mathscr U_{n+1} \rangle \in \sigma.
	\]
	This finishes the definition of \(\sigma\).

	Since \(s\) is winning for One in \(G_{\text{fin}}(\mathcal O_X,\mathcal O_X)\), \(\bigcup\{\mathcal F_n : n \in \omega\}\) is not a cover of \(X\).
	It follows that \(\{U_n : n \in \omega\}\) is not a cover of \(X\) and, therefore, \(\sigma\) is a winning strategy for One in \(G_1(\mathcal K_X, \mathcal K_X)\).
\end{proof}

\subsection{Topological Characterizations and Equivalent Games}

\begin{thm}
    Let \(\mathcal A\) be a collection of closed subsets of \(X\) so that each point of \(X\) can be separated from each \(A\in \mathcal A\); i.e., for each \(A\in \mathcal A\) and \(x\in X\setminus A\), there exists an open set \(U\) so that \(A \subseteq U\) and \(x \not\in U\).
    Then \(\text{I} \underset{\text{pre}}{\uparrow} G_1(\mathscr N[\mathcal A], \neg\mathcal O_X)\) if and only if there is a sequence \(\{A_n : n \in \omega\} \subseteq \mathcal A\) so that \(X = \bigcup_n A_n\).
\end{thm}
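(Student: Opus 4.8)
The plan is to prove the two implications of the biconditional separately and to observe that the point-separation hypothesis is needed only in one of them. Throughout I use the identification of \(A \in \mathcal A\) with \(\mathscr N(A)\), so that a round of \(G_1(\mathscr N[\mathcal A], \neg \mathcal O_X)\) consists of One naming a set \(A \in \mathcal A\) and Two responding with an open \(U \supseteq A\), with One winning a run exactly when Two's chosen open sets cover \(X\).

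The direction that produces a strategy from a covering sequence is immediate and uses neither closedness nor separation. Suppose \(\{A_n : n \in \omega\} \subseteq \mathcal A\) satisfies \(X = \bigcup_n A_n\), and let One follow the predetermined strategy \(\sigma(n) = \mathscr N(A_n)\). Any legal response \(U_n\) of Two satisfies \(A_n \subseteq U_n\), so the open sets Two plays have union \(\bigcup_n U_n \supseteq \bigcup_n A_n = X\); hence Two's plays form an open cover and One wins every run. Thus \(\text{I} \underset{\text{pre}}{\uparrow} G_1(\mathscr N[\mathcal A], \neg \mathcal O_X)\).

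For the converse, I would start from a predetermined winning strategy \(\sigma\) for One and write \(\sigma(n) = \mathscr N(A_n)\) with \(A_n \in \mathcal A\). The claim is that \(\{A_n : n \in \omega\}\) already covers \(X\), and I would prove this by contradiction. If some \(x \in X \setminus \bigcup_n A_n\) existed, then for each \(n\) we would have \(x \in X \setminus A_n\) with \(A_n\) closed, so the hypothesis furnishes an open set \(U_n\) with \(A_n \subseteq U_n\) and \(x \notin U_n\). Since \(U_n \in \mathscr N(A_n)\), these are legal responses of Two against \(\sigma\); as \(\sigma\) is predetermined, One's moves do not depend on Two's, so letting Two play the \(U_n\) yields a genuine run of the game. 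But then \(x \notin \bigcup_n U_n\), so Two's plays do not cover \(X\) and Two wins, contradicting that \(\sigma\) is winning for One. Hence \(X = \bigcup_n A_n\).

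The converse direction carries all of the content, and the separation property is precisely the tool that makes it work: it is what lets Two convert a point \(x\) that is missed by \(\{A_n\}\) into neighborhoods of the \(A_n\) that still miss \(x\), thereby defeating One's predetermined play. I expect no serious obstacle beyond correctly invoking this hypothesis; in particular, unlike Theorem \ref{thm:GdeltaCovering}, this argument does not upgrade to arbitrary (non-predetermined) winning strategies, since without a \(G_\delta\) structure on the members of \(\mathcal A\) there is no distinguished countable reservoir of neighborhoods from which to read off Two's responses.
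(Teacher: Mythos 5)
Your proof is correct and follows essentially the same route as the paper's: the covering sequence gives an obvious predetermined strategy, and conversely a predetermined strategy $\sigma(n)=\mathscr N(A_n)$ must have $\bigcup_n A_n = X$, since otherwise the separation hypothesis lets Two respond with neighborhoods $U_n\supseteq A_n$ all missing a fixed point $x$, defeating $\sigma$. Your closing remarks on where the separation hypothesis enters and why the argument does not upgrade to arbitrary strategies are accurate but not needed for the proof itself.
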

\begin{proof}
	(\(\Rightarrow\))
	Let \(\{A_n : n \in \omega\} \subseteq \mathcal A\) so that \(X = \bigcup \{ A_n : n \in \omega \}\).
	Then let One play \(A_n\) in the \(n^{\text{th}}\) inning.

	(\(\Leftarrow\))
	Suppose One has a predetermined winning strategy \(\sigma\) and let \(A_n = \sigma(n)\) for each \(n \in \omega\).
	By way of contradiction, suppose there is some \(x \in X \setminus \bigcup\{A_n:n\in\omega\}\).
	For each \(n \in \omega\), we can find an open set \(U_n\) so that \(A_n \subseteq U_n\) and \(x \not\in U_n\).
	Now, we have a contradiction to the fact that \(\sigma\) is a winning predetermined strategy since \(x \not\in \bigcup\{U_n:n\in\omega\}\).
\end{proof}

\begin{cor}
	A \(T_1\) space \(X\) is countable if and only if \(\text{I} \underset{\text{pre}}{\uparrow} G_1(\mathscr N[X],\neg \mathcal O_X)\).
\end{cor}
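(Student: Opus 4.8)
The plan is to obtain this corollary as an immediate specialisation of the preceding theorem, taking \(\mathcal A\) to be the family of all singletons, \(\mathcal A = \{\{x\} : x \in X\}\). With this choice, \(\mathscr N[\mathcal A]\) is precisely \(\mathscr N[X]\) (since \(\mathscr N(\{x\}) = \mathscr N_x\)), so that \(G_1(\mathscr N[\mathcal A], \neg\mathcal O_X)\) is exactly the point-open game \(G_1(\mathscr N[X], \neg\mathcal O_X)\) appearing in the statement.

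It remains to verify that \(\mathcal A\) satisfies the two hypotheses of the theorem. First, because \(X\) is \(T_1\), every singleton \(\{x\}\) is closed, so \(\mathcal A\) is a collection of closed sets. Second, I would check the separation property: given a singleton \(\{y\}\) and a point \(x \in X \setminus \{y\}\), i.e. \(x \neq y\), take \(U = X \setminus \{x\}\). Since \(X\) is \(T_1\), \(\{x\}\) is closed, hence \(U\) is open; plainly \(y \in U\) (as \(y \neq x\)) while \(x \notin U\), which is exactly the required separation of \(x\) from \(\{y\}\). Thus both hypotheses hold and the theorem applies to \(\mathcal A\).

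Finally, I would translate the theorem's conclusion. It gives that \(\text{I} \underset{\text{pre}}{\uparrow} G_1(\mathscr N[X], \neg\mathcal O_X)\) holds if and only if there is a sequence \(\{A_n : n \in \omega\} \subseteq \mathcal A\) with \(X = \bigcup_n A_n\). As each \(A_n\) is a singleton, such a sequence exists exactly when \(X\) is a countable union of singletons, which is precisely the assertion that \(X\) is countable (conversely, enumerating a countable \(X\) as \(\{x_n : n \in \omega\}\) and setting \(A_n = \{x_n\}\) produces such a sequence). This yields the claimed equivalence. There is no substantive obstacle; the only point needing care is to observe that the single assumption that \(X\) is \(T_1\) supplies both the closedness of singletons and the point/closed-set separation demanded by the theorem.
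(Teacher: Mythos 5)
Your proposal is correct and is exactly the intended derivation: the paper states this corollary without proof as an immediate specialisation of Theorem \ref{thm:StrongPreCovering}'s companion result to \(\mathcal A = \{\{x\} : x \in X\}\), and your verification that the \(T_1\) hypothesis supplies both the closedness of singletons and the required point/closed-set separation (via \(U = X \setminus \{x\}\)) is precisely what is needed.
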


\begin{cor}
	A Hausdorff space \(X\) is \(\sigma\)-compact if and only if \(\text{I} \underset{\text{pre}}{\uparrow} G_1(\mathscr N[K(X)], \neg\mathcal O_X)\).
\end{cor}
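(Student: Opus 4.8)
The plan is to derive this as a direct application of the preceding theorem, taking \(\mathcal A = K(X)\) to be the family of all non-empty compact subsets of \(X\). With this choice, the game \(G_1(\mathscr N[\mathcal A], \neg\mathcal O_X)\) becomes exactly the compact-open game \(G_1(\mathscr N[K(X)], \neg\mathcal O_X)\), and the condition in the theorem that there be a sequence \(\{A_n : n \in \omega\} \subseteq \mathcal A\) with \(X = \bigcup_n A_n\) becomes precisely the assertion that \(X\) is a countable union of compact sets, i.e. that \(X\) is \(\sigma\)-compact. So once the hypotheses of the theorem are verified for \(\mathcal A = K(X)\), the equivalence follows immediately.

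Thus the only real work is to check that \(K(X)\) meets the two standing hypotheses of the theorem when \(X\) is Hausdorff. First I would recall that in a Hausdorff space every compact set is closed, so \(K(X)\) is indeed a family of closed subsets of \(X\). Second, I would verify the separation hypothesis: given \(K \in K(X)\) and \(x \in X \setminus K\), I need an open \(U\) with \(K \subseteq U\) and \(x \not\in U\). This is the standard point-versus-compact separation argument in Hausdorff spaces: for each \(y \in K\) choose disjoint open sets \(U_y \ni y\) and \(V_y \ni x\); extract a finite subcover \(U_{y_1}, \ldots, U_{y_n}\) of \(K\) by compactness; and set \(U = \bigcup_{i=1}^n U_{y_i}\), which contains \(K\) and misses \(x\), since \(x\) lies in the open set \(\bigcap_{i=1}^n V_{y_i}\) that is disjoint from \(U\).

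With both hypotheses confirmed, the preceding theorem yields \(\text{I} \underset{\text{pre}}{\uparrow} G_1(\mathscr N[K(X)], \neg\mathcal O_X)\) if and only if \(X = \bigcup_n K_n\) for some sequence of compact sets \(K_n\), which is the definition of \(\sigma\)-compactness. I expect no genuine obstacle here beyond correctly invoking the theorem; the one point requiring care is the Hausdorff separation of a point from a compact set, which is exactly where the Hausdorff assumption (rather than the mere \(T_1\) assumption of the companion corollary) gets used.
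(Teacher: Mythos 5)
Your proposal is correct and is exactly the paper's intended derivation: the corollary is stated without proof as an immediate consequence of the preceding theorem, applied with \(\mathcal A = K(X)\), and your verification that compact subsets of a Hausdorff space are closed and can be separated from outside points by open sets is precisely the routine check being left to the reader. Nothing further is needed.
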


\begin{thm}
    \label{thm:StrongPreCovering}
    Let \(\mathcal A\) be a collection of closed subsets of \(X\) so that each point of \(X\) can be separated from each \(A\in \mathcal A\); i.e., for each \(A\in \mathcal A\) and \(x\in X\setminus A\), there exists an open set \(U\) so that \(A \subseteq U\) and \(x \not\in U\).
    Then \(\text{I} \underset{\text{pre}}{\uparrow} G_1(\mathscr N[\mathcal A],\neg \mathcal O(X,\mathcal A))\) if and only if there is a sequence \(\{A_n : n \in \omega\} \subseteq \mathcal A\) so that \(X = \bigcup_n A_n\) and for each \(A\in \mathcal A\), there exists an \(n\) so that \(A \subseteq A_n\).
\end{thm}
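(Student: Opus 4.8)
The plan is to follow the template of the preceding theorem, exploiting the fact that a predetermined strategy for One fixes One's entire sequence of moves in advance, so that Two---knowing every move of One ahead of time---may choose all of its neighbourhoods to sabotage One simultaneously. Writing \(A_n\) for the element of \(\mathcal A\) that One plays in round \(n\) (under the identification of \(\mathscr N(A_n)\) with \(A_n\)), the whole argument reduces to analysing when Two can defeat the fixed sequence \(\{A_n : n \in \omega\}\) using the separation hypothesis.

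For the direction assuming the sequence exists, I would let One's predetermined strategy be \(\sigma(n) = \mathscr N(A_n)\), i.e. One simply plays \(A_n\) in the \(n\)th inning. Given any responses \(U_n \supseteq A_n\) of Two, the equality \(X = \bigcup_n A_n\) gives \(X = \bigcup_n A_n \subseteq \bigcup_n U_n\), so \(\{U_n : n \in \omega\}\) is an open cover; and for each \(A \in \mathcal A\) one picks \(n\) with \(A \subseteq A_n \subseteq U_n\), which witnesses \(\{U_n : n \in \omega\} \in \mathcal O(X, \mathcal A)\). Hence every run is a win for One and \(\sigma\) is a winning predetermined strategy.

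For the converse, suppose \(\sigma\) is a winning predetermined strategy for One and set \(A_n = \sigma(n)\). I would argue by contradiction in two steps. First, if some \(x \in X \setminus \bigcup_n A_n\), then \(x \notin A_n\) for every \(n\), so by the separation hypothesis Two may choose open \(U_n \supseteq A_n\) with \(x \notin U_n\); then \(x \notin \bigcup_n U_n\), so \(\{U_n : n \in \omega\}\) is not a cover and Two wins, contradicting that \(\sigma\) is winning. Hence \(X = \bigcup_n A_n\). Second---and this is the step requiring the most care---suppose some \(A \in \mathcal A\) satisfies \(A \not\subseteq A_n\) for all \(n\). For each \(n\) choose a witness \(x_n \in A \setminus A_n\), and again use separation to obtain open \(U_n \supseteq A_n\) with \(x_n \notin U_n\). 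Having Two play these \(U_n\), we get \(A \not\subseteq U_n\) for every \(n\) (as \(x_n \in A \setminus U_n\)), so \(\{U_n : n \in \omega\} \notin \mathcal O(X, \mathcal A)\) and Two wins---again a contradiction. Thus \(\{A_n : n \in \omega\}\) has both required properties.

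The first direction and the covering part of the converse are essentially identical to the previous theorem; the genuinely new point, and the only place where the \(\mathcal O(X,\mathcal A)\) (as opposed to \(\mathcal O_X\)) win condition is used, is the second step of the converse. The key observation there is that Two need not destroy the cover at all: to beat One it suffices to keep the single set \(A\) uncaptured, which Two accomplishes by separating a point \(x_n\) of \(A\) from each \(A_n\) round by round. The predetermined hypothesis is exactly what makes this simultaneous sabotage legitimate, since One's moves \(A_n\) do not depend on Two's earlier choices.
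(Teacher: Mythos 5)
Your proposal is correct and follows essentially the same argument as the paper: play the sequence \(A_n\) directly for the forward direction, and for the converse use the separation hypothesis to let Two choose \(U_n \supseteq A_n\) avoiding a witness point, contradicting that \(\sigma\) is winning. In fact you are slightly more complete than the paper, which only writes out the contradiction for the condition ``every \(A \in \mathcal A\) is absorbed by some \(A_n\)'' and leaves the \(X = \bigcup_n A_n\) step implicit (it is the same argument as in the preceding theorem), whereas you spell out both.
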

\begin{proof}
    (\(\Rightarrow\))
    Let \(\{A_n : n \in \omega\} \subseteq \mathcal A\) be so that \(X = \bigcup \{ A_n : n \in \omega \}\) and, for every \(A \in \mathcal A\), there exists \(n \in \omega\) so that \(A \subseteq A_n\).
	Then let One play \(A_n\) in the \(n^{\text{th}}\) inning.

	(\(\Leftarrow\))
	Suppose One has a predetermined winning strategy \(\sigma\) and let \(A_n = \sigma(n)\) for each \(n \in \omega\).
	By way of contradiction, suppose there is some \(A \in \mathcal A\) so that, for each \(n \in \omega\), \(A \not\subseteq A_n\).
	Then, let \(x_n \in A \setminus A_n\) for each \(n \in \omega\).
	For each \(n \in \omega\), we can find an open set \(U_n\) so that \(A_n \subseteq U_n\) and \(x_n \not\in U_n\).
	But then \(A_0 , U_0 , A_1 , U_1 , \ldots\) is a full run of \(G_1(\mathscr N[K(X)],\neg \mathcal O(X, \mathcal A))\) played according to \(\sigma\) so there must be some \(n \in \omega\) for which \(x_n \in A \subseteq U_n\), a contradiction.
\end{proof}

In Theorem \ref{thm:Hemicompact}, the implications
\begin{center}
    \ref{hemi} \(\Rightarrow\) \ref{metri} \(\Rightarrow\) \ref{firstly} \(\Rightarrow\) \ref{hemi}
\end{center}
are adapted from \cite{Arens} and we have included the proof for the sake of clarity.

\begin{thm}
    \label{thm:Hemicompact}
	For a Tychonoff space \(X\), the following are equivalent.
	\begin{enumerate}[label=(\alph*)]
	    \item \label{hemi}
	    \(X\) is hemicompact
	    \item \label{VietorisHemi}
	    \(\mathbb K(X)\) is hemicompact
	    \item \label{VietorisSigmaCompact}
	    \(\mathbb K(X)\) is \(\sigma\)-compact
	    \item \label{metri}
	    \(C_k(X)\) is metrizable
	    \item \label{firstly}
	    \(C_k(X)\) is first-countable
	    \item \label{prekKO}
	    \(\text{I} \underset{\text{pre}}{\uparrow} G_1(\mathscr N[K(X)], \neg\mathcal K_X)\), a variant of the compact-open game
	    \item \label{markKRoth}
	    \(\text{II} \underset{\text{mark}}{\uparrow} G_1(\mathcal K_X, \mathcal K_X)\), a variant of the Rothberger game
	    \item \label{preGru}
	    \(\text{I} \underset{\text{pre}}{\uparrow} G_1(\mathscr N_{\mathbf 0}, \neg \Gamma_{C_k(X),\mathbf 0})\), Gruenhage's W-game
	    \item \label{preCL}
	    \(\text{I} \underset{\text{pre}}{\uparrow} G_1(\mathscr N_{\mathbf 0}, \neg \Omega_{C_k(X),\mathbf 0})\), the closure game
	    \item \label{preCD}
	    \(\text{I} \underset{\text{pre}}{\uparrow} G_1(\mathscr T_{C_k(x)}, CD_{C_k(X)})\), the closed discrete selection game
        \item \label{markCkCFT}
        \(\text{II} \underset{\text{mark}}{\uparrow} G_1(\Omega_{C_k(X), \mathbf 0}, \Omega_{C_k(X), \mathbf 0})\), the countable fan tightness game
        \item \label{markCkCDFT}
        \(\text{II} \underset{\text{mark}}{\uparrow} G_1(\mathcal D_{C_k(X)}, \Omega_{C_k(X), \mathbf 0})\), the countable dense fan tightness game
	 \end{enumerate}
\end{thm}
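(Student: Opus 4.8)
The plan is to treat hemicompactness \ref{hemi} (equivalently first-countability of \(C_k(X)\), \ref{firstly}) as a hub and to connect everything to it through three channels: the classical Arens chain among \ref{hemi}, \ref{metri}, \ref{firstly}; the compact-open covering games \ref{prekKO}, \ref{markKRoth} together with the hyperspace reformulations \ref{VietorisHemi}, \ref{VietorisSigmaCompact}; and the pointed games on \(C_k(X)\) at \(\mathbf 0\), namely \ref{preGru}--\ref{markCkCDFT}. Throughout I would use that \(C_k(X)\) is a topological vector space, hence homogeneous, so first-countability at \(\mathbf 0\) is equivalent to first-countability everywhere.

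For the covering channel I would first apply Theorem \ref{thm:StrongPreCovering} with \(\mathcal A = K(X)\): in a Tychonoff space each compact set is closed and can be separated from any exterior point, and \(\mathcal O(X,K(X)) = \mathcal K_X\), so \ref{prekKO} is equivalent to the existence of a countable family of compacta cofinal under inclusion; since singletons are compact this family also covers \(X\), which is exactly \ref{hemi}. Then \ref{prekKO} \(\Leftrightarrow\) \ref{markKRoth} is the Markov half of the duality in Corollary \ref{cor:GeneralGalvin} (equivalently Corollary \ref{cor:kKODual}) applied to \(\mathcal A = K(X)\), \(\mathcal B = \mathcal K_X\). The hyperspace equivalences rest on two facts: the Vietoris hyperspace of a compact space is compact, and if \(\mathcal C \subseteq \mathbb K(X)\) is compact then \(\bigcup \mathcal C\) is compact in \(X\) (cover \(\bigcup\mathcal C\), shrink to a finite subcover on each member, and play the resulting \(\text{below}(\cdot)\) neighborhoods against compactness of \(\mathcal C\)). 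From the first fact, for a cofinal sequence \(\{K_n\}\) of compacta in \(X\) each \(\mathbb K(K_n) \subseteq \mathbb K(X)\) is compact, and the second fact makes \(\{\mathbb K(K_n)\}\) cofinal in \(\mathbb K(X)\) (a compact \(\mathcal C \subseteq \mathbb K(X)\) has \(\bigcup\mathcal C\) inside some \(K_n\)), giving \ref{hemi} \(\Rightarrow\) \ref{VietorisHemi}; then \ref{VietorisHemi} \(\Rightarrow\) \ref{VietorisSigmaCompact} since hemicompact \(\Rightarrow\) \(\sigma\)-compact in \(T_1\) spaces; and \ref{VietorisSigmaCompact} \(\Rightarrow\) \ref{hemi} by taking unions of a \(\sigma\)-compact decomposition.

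For the function-space channel I would cite Arens \cite{Arens} for \ref{hemi} \(\Leftrightarrow\) \ref{metri} \(\Leftrightarrow\) \ref{firstly}. The games \ref{preGru}, \ref{preCL} reduce directly to first-countability at \(\mathbf 0\): One has a predetermined winning strategy in \(G_1(\mathscr N_{\mathbf 0}, \neg\Gamma_{C_k(X),\mathbf 0})\) exactly when One can play neighborhoods \(U_n\) of \(\mathbf 0\) along which every selection converges, which forces \(\{U_n\}\) to be a neighborhood base; and for \ref{preCL} the winning condition ``every selection has \(\mathbf 0\) in its closure'' forces every neighborhood of \(\mathbf 0\) to contain infinitely many \(U_n\), in particular some \(U_n\), again a base; a decreasing base yields such strategies conversely. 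The same decreasing base \(\{W_n\}\) drives the forward implications \ref{firstly} \(\Rightarrow\) \ref{preCD}, \ref{markCkCFT}, \ref{markCkCDFT}: One plays the punctured neighborhoods \(W_n \setminus \{\mathbf 0\}\) to force a selection converging to \(\mathbf 0\) yet avoiding it, hence not closed discrete; and Two's Markov strategy, meeting each blade or dense set inside \(W_n\), forces a selection converging to \(\mathbf 0\), so lying in \(\Omega_{C_k(X),\mathbf 0}\).

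The main obstacle is the reverse passage from the Markov-level function-space games \ref{preCD}, \ref{markCkCFT}, \ref{markCkCDFT} back to hemicompactness. Here I would invoke and adapt the Scheepers--Ko\v{c}inac correspondence \cite{Scheepers1997, Kocinac} between the local structure of \(C_k(X)\) at \(\mathbf 0\) and \(k\)-covers of \(X\): a set \(A\) with \(\mathbf 0 \in \text{cl}_{C_k(X)}(A)\) produces, via the sublevel sets \(\{x : |f(x)| < \varepsilon\}\) for \(f \in A\), members of \(\mathcal K_X\), and every dense set likewise generates \(k\)-covers. This correspondence should be arranged so that it carries Markov strategies for Two in \(G_1(\Omega_{C_k(X),\mathbf 0}, \Omega_{C_k(X),\mathbf 0})\) and \(G_1(\mathcal D_{C_k(X)}, \Omega_{C_k(X),\mathbf 0})\) to Markov strategies for Two in \(G_1(\mathcal K_X, \mathcal K_X)\), identifying \ref{markCkCFT} and \ref{markCkCDFT} with \ref{markKRoth}; and for \ref{preCD} I would localize the closed discrete game to \(\mathbf 0\) by homogeneity, in the spirit of Tkachuk \cite{TkachukGame} and Clontz--Holshouser \cite{ClontzHolshouser}, reducing it to the closure game \ref{preCL}. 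The delicate points will be handling the \(\varepsilon\)-parameter uniformly, so that a single strategy on covers is produced rather than one per \(\varepsilon\), and ensuring the translation respects the turn-by-turn, memoryless nature of Markov and predetermined strategies; these are precisely the modifications of the classical selection-principle correspondences that the Markov setting demands.
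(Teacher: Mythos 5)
Most of your architecture matches the paper's and is sound: the covering channel via Theorem \ref{thm:StrongPreCovering} and Corollary \ref{cor:kKODual}, the hyperspace equivalences, the Arens chain, the characterization of \ref{preGru} and \ref{preCL} as first-countability at \(\mathbf 0\), and the cycle \ref{firstly} \(\Rightarrow\) \ref{markCkCFT} \(\Rightarrow\) \ref{markCkCDFT} \(\Rightarrow\) \ref{markKRoth} (your direct route into \ref{markCkCFT} from a decreasing base is even a little cleaner than the paper's, which goes through \ref{markKRoth} and needs the \(\Gamma_k\) upgrade of Corollary \ref{cor:CompactOpenCofinal} to handle the \(\varepsilon\)-parameter; your return leg \ref{markCkCDFT} \(\Rightarrow\) \ref{markKRoth} via the dense sets \(D(\mathscr U)\) works with the single value \(\varepsilon = 1/2\), so the uniformity worry you flag dissolves there).

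The genuine gap is the return path from \ref{preCD}. You propose to ``localize the closed discrete game to \(\mathbf 0\) by homogeneity, reducing it to the closure game \ref{preCL},'' but homogeneity cannot do this. A predetermined winning strategy for One in \(G_1(\mathscr T_{C_k(X)}, CD_{C_k(X)})\) only guarantees that every selection has \emph{some} limit point, and that limit point varies with Two's run; translating One's plays by a homeomorphism moves the plays, not the run-dependent cluster point, so you do not obtain a strategy forcing every selection to cluster at \(\mathbf 0\). (Indeed \ref{preCL} \(\Rightarrow\) \ref{preCD} is the easy direction; the reverse is exactly what needs work.) The paper closes this leg by a different, concrete argument: inside each open set \(U_n = \sigma(n)\) played by One's predetermined strategy, fix a basic set \([f_n;K_n,\varepsilon_n]\) and call \(K_n\) its support; if \(\{K_n : n \in \omega\}\) failed to be cofinal in \(K(X)\), a compact \(K\) with \(K \not\subseteq K_n\) for all \(n\) would let Two choose \(g_n \in U_n\) agreeing with \(f_n\) on \(K_n\) but with \(g_n(x_n) = n\) at some \(x_n \in K \setminus K_n\); such a selection is closed discrete (any candidate limit function is bounded on \(K\), while the \(g_n\) blow up there), contradicting that \(\sigma\) is winning. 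Hence \(\{K_n\}\) witnesses \ref{hemi} directly. You need this (or an equivalent support-extraction argument) to close the cycle; without it the equivalence of \ref{preCD} with the rest is not established.
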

\begin{proof}
    \ref{hemi} \(\Rightarrow\) \ref{VietorisHemi}:
    Let \(\{K_n:n\in\omega\}\) be a collection of compact sets so that, for every compact \(K \subseteq X\), there exists \(n\in\omega\) so that \(K \subseteq K_n\).
    Notice that
    \[
        \mathscr F_n := \{ K \in \mathbb K(X) : K \subseteq K_n \}
    \]
    is compact for each \(n\in\omega\) and that
    \[
        \mathbb K(X) = \bigcup_{n\in\omega} \mathscr F_n.
    \]
    To see that \(\mathbb K(X)\) is actually hemicompact, let \(\mathscr F \subseteq \mathbb K(X)\) be compact.
    Then \(\bigcup \mathscr F \subseteq X\) is compact which means there exists \(n\in\omega\) so that \(\bigcup \mathscr F \subseteq K_n\).
    It follows that \(\mathscr F \subseteq \mathscr F_n\), establishing the hemicompactness of \(\mathbb K(X)\).

    \ref{VietorisHemi} \(\Rightarrow\) \ref{VietorisSigmaCompact}:
    Obvious.

    \ref{VietorisSigmaCompact} \(\Rightarrow\) \ref{hemi}:
    Let \(\{\mathscr F_n:n\in\omega\}\) be a collection of compact subsets of \(\mathbb K(X)\) so that
    \[
        \mathbb K(X) = \bigcup_{n\in\omega}\mathscr F_n.
    \]
    Then notice that
    \[
        K_n := \bigcup \mathscr F_n
    \]
    is compact in \(X\).
    Let \(K \subseteq X\) be compact.
    Then there exists \(n \in \omega\) so that \(K \in \mathscr F_n\) which provides that \(K \subseteq K_n\).

    \ref{hemi} \(\Rightarrow\) \ref{metri}:
    Let \(\{ K_n : n \in \omega \}\) be an ascending sequence of compact subsets of \(X\) so that, for each compact \(K \subseteq X\), there exists \(n \in \omega\) so that \(K \subseteq K_n\).
    For each \(n \in \omega\) and \(f , g \in C_k(X)\), define
    \[
        \| f - g \|_n = \min\{ \sup\{ |f(x) - g(x) | : x \in K_n \} , 1 \}
    \]
    and define \(d : C_k(X)^2 \to [0,1]\) by
    \[
        d(f,g) = \sum_{n = 0}^\infty \frac{\|f - g\|_n}{2^{n+1}}.
    \]
    It is straight forward to verify that \(d\) is indeed a metric.

    To see that this metric is compatible with the topology on \(C_k(X)\), first consider a basic neighborhood \([f;K,\varepsilon]\) about \(f\).
    Then choose \(n \in \omega\) so large that \(2^{-(n+1)} < \varepsilon\) and \(K \subseteq K_n\).
    Let \(g \in B_d(f,\delta)\) where
    \[
        \delta = \frac{\varepsilon - 2^{-(n+1)}}{n+1} > 0.
    \]
    Behold that
    \[
        \frac{\|f-g\|_n}{2^{n+1}}
        \leq d(f,g)
        < \frac{\varepsilon - 2^{-(n+1)}}{n+1}
    \]
    which provides
    \[
        (n+1) \cdot \frac{\|f-g\|_n}{2^{n+1}} < \varepsilon - 2^{-(n+1)}
        \implies
        \sum_{j=0}^n \frac{\|f-g\|_n}{2^{j+1}}
        < \varepsilon - 2^{-(n+1)}.
    \]
    Moreover,
    \begin{eqnarray*}
        \|f-g\|_n
        &=& \sum_{j=0}^n \frac{\|f-g\|_n}{2^{j+1}} + \sum_{j = n+1}^\infty \frac{\|f-g\|_n}{2^{j+1}}\\
        &<& \varepsilon - 2^{-(n+1)} + \sum_{j = n+1}^\infty \frac{\|f-g\|_n}{2^{j+1}}\\
        &\leq& \varepsilon - 2^{-(n+1)} + \sum_{j = n+1}^\infty \frac{1}{2^{j+1}}\\
        &=& \varepsilon.
    \end{eqnarray*}
    In particular \(|f(x) - g(x)| < \varepsilon\) for every \(x \in K_n\) and, since \(K \subseteq K_n\), we see that this necessitates
    \[
        B_d(f;\delta) \subseteq [f;K,\varepsilon].
    \]

    Now, consider a basic \(d\)-ball \(B_d(f,\varepsilon)\) centered at \(f\) and choose \(n\in\omega\) large enough so that
    \[
        \sum_{j = n+1}^\infty \frac{1}{2^{j+1}} < \frac{\varepsilon}{2}.
    \]
    Let \(g \in [f;K_n,\varepsilon/2]\) and notice that, since the \(\{K_n :n \in\omega\}\) were chosen to be ascending, \(\|f-g\|_n \leq \|f-g\|_{n+1}\) for all \(n \in \omega\).
    It follows that
    \begin{eqnarray*}
        d(f,g)
        &=& \sum_{j = 0}^\infty \frac{\|f-g\|_j}{2^{j+1}}\\
        &=& \sum_{j = 0}^n \frac{\|f-g\|_j}{2^{j+1}} + \sum_{j = n+1}^\infty \frac{\|f-g\|_j}{2^{j+1}}\\
        &\leq& \sum_{j = 0}^n \frac{\|f-g\|_n}{2^{j+1}} + \sum_{j = n+1}^\infty \frac{1}{2^{j+1}}\\
        &<& \|f-g\|_n + \frac{\varepsilon}{2}\\
        &\leq& \frac{\varepsilon}{2} + \frac{\varepsilon}{2} = \varepsilon.
    \end{eqnarray*}
    It follows that \([f;K_n,\varepsilon/2] \subseteq B_d(f;\varepsilon)\).

    \ref{metri} \(\Rightarrow\) \ref{firstly}:
    Obvious.

    \ref{firstly} \(\Rightarrow\) \ref{hemi}:
    Suppose \(C_k(X)\) is first-countable.
	Let \(\mathcal B = \{ U_n : n \in \omega \}\) be a descending neighborhood basis at \(\mathbf 0\).
	Then, for each \(n \in \omega\) let \(K_n \subseteq X\) be compact and \(q_n > 0\) be a rational number so that
	\[
		[\mathbf 0; K_n, q_n] \subseteq U_n.
	\]
	Suppose, toward a contradiction, that there is some compact \(K \subseteq X\) so that, for each \(n \in \omega\), \(K \not\subseteq K_n\).
	Then, for \(n \in \omega\), we can choose \(x_n \in K \setminus K_n\) and define \(f_n : X \to [0,1]\) so that \(f_n(x_n)=1\) and \(f_n[K_n] \equiv 0\).
	Since \(f_n \in [\mathbf 0; K_n , q_n]\) for each \(n\in\omega\), \(f_n \to \mathbf 0\).
	In particular, for any \(\varepsilon \in (0,1)\), there must be some \(n \in \omega\) so that
	\[
	    f_n \in [\mathbf 0; K, \varepsilon] \implies 1 = f_n(x_n) < \varepsilon,
    \]
    an absurdity.
    Hence, \(X\) is hemicompact.

    \ref{hemi} \(\Leftrightarrow\) \ref{prekKO}:
    This follows from Theorem \ref{thm:StrongPreCovering}.

    \ref{prekKO} \(\Leftrightarrow\) \ref{markKRoth}:
    This follows from Corollary \ref{cor:kKODual}.

    \ref{markKRoth} \(\Rightarrow\) \ref{markCkCFT}:
    Let \(t\) be a winning mark for Two in \(G_1(\mathcal K_X , \mathcal K_X)\).
    By Corollary \ref{cor:CompactOpenCofinal}, we can assume \(t\) is a winning mark for Two in \(G_1(\mathcal K_X , \Gamma_k(X))\)
    For any \(A \in \Omega_{C_k(X), \mathbf 0}\), let
    \[
        U_{A,n} = t(\mathscr U(A,n) , n)
    \]
    where
    \[
        \mathscr U(A,n) := \{ f^{-1}[(-2^{-n},2^{-n})] : f \in A \}.
    \]
    Note that \(\mathscr U(A,n)\) is a \(k\)-cover, so this definition makes sense. Say \(U_{A,n} = f_{A,n}^{-1}[(-2^{-n},2^{-n})]\) and set
    \[
        \tau(A,n) = f_{A,n}.
    \]
    We claim that \(\tau\) is a winning mark for Two in \(G_1(\Omega_{C_k(X), \mathbf 0}, \Omega_{C_k(X), \mathbf 0})\).
    Suppose \(A_0,f_{A_0,0},\cdots\) is a play of this game according to \(\tau\).
    Set \(f_n = f_{A_n,n}\) and \(U_n = U_{A_n,n}\).
    We need to show that \(\mathbf 0 \in \overline{\{f_n : n \in \omega\}}\).
    Let \(K \subseteq X\) be compact and \(\varepsilon > 0\).
    Then \(K \subseteq U_n\) for some \(n\) with \(2^{-n} < \varepsilon\).
    So \(f_n[K] \subseteq (-2^{-n},2^{-n}) \subseteq (-\varepsilon,\varepsilon)\) and thus \(f_n \in [\mathbf 0; K, \varepsilon]\).
    Therefore \(\mathbf 0 \in \overline{\{f_n : n \in \omega\}}\) and \(\tau\) is a winning mark for Two in \(G_1(\Omega_{C_k(X), \mathbf 0}, \Omega_{C_k(X), \mathbf 0})\).

    \ref{markCkCFT} \(\Rightarrow\) \ref{markCkCDFT}:
    This follows immediately from the fact that \(\mathcal D_{C_k(X)} \subseteq \Omega_{C_k(X), \mathbf 0}\).

    \ref{markCkCDFT} \(\Rightarrow\) \ref{markKRoth}:
    Let \(t\) be a winning mark for Two in \(G_1(\mathcal D_{C_k(X)}, \Omega_{C_k(X), \mathbf 0})\).
    Let
    \[
        D(\mathscr U) = \{g \in C_k(X) : (\exists U \in \mathscr U)(g[X \setminus U] \equiv 1)\}.
    \]
    We claim that \(D(\mathscr U)\) is dense in \(C_k(X)\).
    Indeed, let \(f \in C_k(X)\), \(K \in K(X)\), and \(\varepsilon > 0\).
    Choose \(U \in \mathscr U\) so that \(K \subseteq U\) and then let \(g \in C_k(X)\) be so that \(g|_K \equiv f|_K\) and \(g[X \setminus U] \equiv 1\).
    Notice that \(g \in D(\mathscr U) \cap [f; K , \varepsilon]\).
    Consequently, \(D(\mathscr U)\) is dense in \(C_k(X)\).

    Define \(\tau : \mathcal K_X \times \omega \to \mathscr T_X\) by letting \(\tau(\mathscr U, n)\) be \(U \in \mathscr U\) so that \(f[X\setminus U] \equiv 1\) where \(f = t(D(\mathscr U), n)\).
    Let \(\mathscr U_1 , \mathscr U_2\) be \(k\)-covers played by One and
    \[
        U_n = \tau(\mathscr U_n , n).
    \]
    Let \(K\) be compact.
    Then there is some \(n \in \omega\) so that \(f_n \in [\mathbf 0; K, 1/2]\).
    Since \(f_n[X \smallsetminus U_n] \equiv 1\), it must be the case that \(K \cap (X \smallsetminus U_n) = \emptyset\).
    Thus \(K \subseteq U_n\).

    \ref{firstly} \(\Rightarrow\) \ref{preGru}:
    Let \(\{U_n : n \in \omega\}\) be a descending sequence of open sets which form a base for the topology of \(C_k(X)\) at \(\mathbf 0\).
    Then let One play \(U_n\) in the \(n^{\text{th}}\) inning and notice that this defines a predetermined winning strategy for One in \(G_1(\mathscr N_{\mathbf 0}, \neg \Gamma_{C_k(X),\mathbf 0})\).

    \ref{preGru} \(\Rightarrow\) \ref{preCL} \(\Rightarrow\) \ref{preCD}:
    Obvious

    \ref{preCD} \(\Rightarrow\) \ref{hemi}:
    Let \(U_n = \sigma(n)\).
    Say \(K_n\) is the support of \(U_n\).
    We claim that \(\{K_n : n \in \omega\}\) witnesses that \(X\) is hemicompact.
    Towards a contradiction, suppose otherwise.
    Then there is a compact \(K \subseteq X\) so that \(K \not \subseteq K_n\) for any \(n\).
    Define \(f_n \in U_n\) with the property that \(f(x_n) = n\) where \(x_n \in K \setminus K_n\).
    Then \(U_0,f_0,\cdots\) is a play of \(G_1(\mathscr T_{C_k(x)}, CD_{C_k(X)})\) according to \(\sigma\), but \(\{f_n : n \in \omega\}\) has no limit points.
    Therefore Two has won the game, which is a contradiction.
\end{proof}

\begin{thm}
    For a Tychonoff space \(X\), the following are equivalent.
    \begin{enumerate}[label=(\alph*)]
	    \item \label{fullStratkKO}
	    \(\text{I} \uparrow G_1(\mathscr N[K(X)], \neg\mathcal K_X)\), a variant of the compact-open game
	    \item \label{Gruen}
	    \(\text{I} \uparrow G_1(\mathscr N_{\mathbf 0}, \neg \Gamma_{C_k(X),\mathbf 0})\), Gruenhage's W-game
	    \item \label{CL}
	    \(\text{I} \uparrow G_1(\mathscr N_{\mathbf 0}, \neg \Omega_{C_k(X),\mathbf 0})\), the closure game
	    \item \label{CD}
	    \(\text{I} \uparrow G_1(\mathscr T_{C_k(x)}, CD_{C_k(X)})\), the closed discrete selection game
	    \item \label{TwoSelection}
        \(\text{II} \uparrow G_1(\mathcal K_X, \mathcal K_X)\), a variant of the Rothberger game
        \item \label{TwoBladeSelect}
        \(\text{II} \uparrow G_1(\Omega_{C_k(X), \mathbf 0}, \Omega_{C_k(X), \mathbf 0})\), the countable fan tightness game
        \item \label{TwoDenseSelectBlade}
        \(\text{II} \uparrow G_1(\mathcal D_{C_k(X)}, \Omega_{C_k(X), \mathbf 0})\), the countable dense fan tightness game
	 \end{enumerate}
\end{thm}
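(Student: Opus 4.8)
The plan is to establish the perfect-information analogue of Theorem~\ref{thm:Hemicompact} by closing the cycle of One-sided games \ref{fullStratkKO} $\Rightarrow$ \ref{Gruen} $\Rightarrow$ \ref{CL} $\Rightarrow$ \ref{CD} $\Rightarrow$ \ref{fullStratkKO}, and then attaching the Two-sided games \ref{TwoSelection}, \ref{TwoBladeSelect}, \ref{TwoDenseSelectBlade} to it. The equivalence \ref{fullStratkKO} $\Leftrightarrow$ \ref{TwoSelection} is immediate from the strategic duality in Corollary~\ref{cor:kKODual}. The three Two-sided games are then tied together exactly as their Markov counterparts are in the proof of Theorem~\ref{thm:Hemicompact}: \ref{TwoBladeSelect} $\Rightarrow$ \ref{TwoDenseSelectBlade} is the trivial restriction coming from $\mathcal D_{C_k(X)} \subseteq \Omega_{C_k(X),\mathbf 0}$, while \ref{TwoSelection} $\Rightarrow$ \ref{TwoBladeSelect} and \ref{TwoDenseSelectBlade} $\Rightarrow$ \ref{TwoSelection} reuse the maps $\mathscr U(A,n) = \{f^{-1}[(-2^{-n},2^{-n})] : f \in A\}$ and $D(\mathscr U)$ from that proof, now with Two consulting the full history of One's covers rather than only the current one. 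I would first upgrade a winning Two-strategy in $G_1(\mathcal K_X,\mathcal K_X)$ to one targeting $\Gamma_k(X)$ via Corollary~\ref{cor:CompactOpenCofinal}, so that the selected functions genuinely converge to $\mathbf 0$.

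For \ref{fullStratkKO} $\Rightarrow$ \ref{Gruen} I would again pass to the $\gamma_k$-form using Corollary~\ref{cor:CompactOpenCofinal}, fixing a One-strategy that forces Two's open sets to constitute a $\gamma_k$-cover. One then plays Gruenhage's game by presenting the basic neighbourhood $[\mathbf 0; K_n, 2^{-n}]$, where $K_n$ is delivered by the compact-open strategy; reading Two's response $f_n$ as the open set $U_n = f_n^{-1}[(-2^{-n},2^{-n})] \supseteq K_n$ and feeding $U_n$ back to that strategy. Since $\{U_n\}$ is a $\gamma_k$-cover, every compact $K$ lies in $U_n$ for all large $n$, so $|f_n| < 2^{-n}$ on $K$ eventually and $f_n \to \mathbf 0$ in $C_k(X)$. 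The implications \ref{Gruen} $\Rightarrow$ \ref{CL} $\Rightarrow$ \ref{CD} are then essentially free: convergence to $\mathbf 0$ forces $\mathbf 0$ into the closure of Two's selection, and, after deleting $\mathbf 0$ from One's sets so that a prudent Two never plays $\mathbf 0$, having $\mathbf 0$ in the closure of a $\mathbf 0$-free set exhibits an accumulation point and defeats the closed discrete requirement.

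The decisive step is \ref{CD} $\Rightarrow$ \ref{fullStratkKO}, the paper's headline equivalence between the closed discrete selection game on $C_k(X)$ and the compact-open variant on $X$. The template is clean when One plays basic neighbourhoods: given a winning closure strategy feeding out $[\mathbf 0; K_n, \varepsilon_n]$, read Two's open $U_n$ as a function through a Urysohn map $f_n$ with $f_n \equiv 0$ on $K_n$ and $f_n \equiv 1$ on $X \setminus U_n$ (available since $X$ is Tychonoff and $K_n$ is compact), so that $f_n \in [\mathbf 0; K_n, \varepsilon_n]$; because the strategy forces $\mathbf 0 \in \overline{\{f_n : n \in \omega\}}$, for every compact $K$ some $f_n$ is below $1$ on $K$, which forces $K \subseteq U_n$ and exhibits $\{U_n\}$ as a $k$-cover. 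I would use this verbatim to obtain the clean chain \ref{CL} $\Rightarrow$ \ref{fullStratkKO}, so that \ref{fullStratkKO}, \ref{Gruen}, \ref{CL} are already mutually equivalent; note that this also uses the reduction, legitimate in the closure game, that One may shrink each neighbourhood it plays to a basic one without losing the win.

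The remaining obstacle is that in \ref{CD} One plays arbitrary members of $\mathscr T_{C_k(X)}$, so there is no compact support $K_n$ to read off and no guarantee that the Urysohn function $f_n$ lies in One's set; I expect this to be the hard part. My plan is to reduce an arbitrary winning closed discrete strategy to one playing basic neighbourhoods of $\mathbf 0$, exploiting that $C_k(X)$ is a homogeneous topological vector space on which translation is a homeomorphism preserving $CD_{C_k(X)}$; intuitively, a strategy forcing accumulation must eventually funnel Two's selections into shrinking neighbourhoods of the accumulation point, which homogeneity allows us to recentre at $\mathbf 0$. This is precisely the perfect-information generalisation of the predetermined argument \ref{preCD} $\Rightarrow$ \ref{hemi} in Theorem~\ref{thm:Hemicompact}, where a compact $K$ escaping all the supports $K_n$ lets One build functions $f_n$ with $f_n(x_n) = n$ at points $x_n \in K \setminus K_n$, producing an unbounded and hence closed discrete selection that contradicts the win. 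Making this recentring work uniformly against a reactive Two, rather than against a fixed predetermined sequence, is the principal difficulty I anticipate.
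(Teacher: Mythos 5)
Most of your architecture matches the paper's: (a) $\Leftrightarrow$ (e) by Corollary \ref{cor:kKODual}; (e) $\Rightarrow$ (f) $\Rightarrow$ (g) $\Rightarrow$ (e) by the $\mathscr U(A,n)$ and $D(\mathscr U)$ translations run with full history; (a) $\Rightarrow$ (b) via the $\Gamma_k(X)$ upgrade of Lemma \ref{lem:CompactOpenCofinal}; (b) $\Rightarrow$ (c) $\Rightarrow$ (d) trivially; and your direct Urysohn argument for (c) $\Rightarrow$ (a) is sound. The genuine gap is exactly where you flag it: closing the cycle from (d). Your proposed reduction --- recentring an arbitrary winning strategy for One in $G_1(\mathscr T_{C_k(X)}, CD_{C_k(X)})$ to one playing basic neighbourhoods of $\mathbf 0$ via homogeneity --- does not work as stated. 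A winning strategy only guarantees that each completed play of Two's selections has \emph{some} accumulation point, and that point depends on the play; there is no single function to recentre at, One's open sets need not be translates of neighbourhoods of anything, and they need not shrink. The heuristic that such a strategy ``must eventually funnel Two's selections into shrinking neighbourhoods of the accumulation point'' is unsupported and is precisely what you would have to prove.

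The paper's proof of (d) $\Rightarrow$ (a) avoids any normalisation of One's sets. For each open $W_n$ produced by the winning strategy $s$, fix an arbitrary basic box $[\phi(W_n); K_n, \varepsilon_n] \subseteq W_n$ with $\varepsilon_n < 1$, and have One play $K_n$ in $G_1(\mathscr N[K(X)], \neg\mathcal K_X)$. Given Two's reply $U_n \supseteq K_n$, take a Urysohn function $g_n$ with $g_n \equiv 1$ on $K_n$ and $g_n \equiv 0$ off $U_n$, and feed
\[
f_n = \phi(W_n)\cdot g_n + (1-g_n)\cdot n
\]
back to $s$; then $f_n \in [\phi(W_n);K_n,\varepsilon_n] \subseteq W_n$ while $f_n \equiv n$ on $X \setminus U_n$. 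This forced divergence off $U_n$ is the device your proposal is missing: any accumulation point $f$ of $\{f_n : n \in \omega\}$ is bounded on each compact $K$, so if some $K$ satisfied $K \not\subseteq U_m$ for every $m$, then $|f_m(x) - f(x)| \geq 1$ at some $x \in K \setminus U_m$ for all large $m$, whence $[f;K,1]$ contains only finitely many $f_m$, contradicting accumulation. Hence Two's replies form a $k$-cover and One wins. The argument works for an \emph{arbitrary} accumulation point, which is exactly why no recentring at $\mathbf 0$ is needed; the generalisation of the predetermined case is carried by the ``$\equiv n$ off $U_n$'' construction, not by homogeneity.
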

\begin{proof}
    \ref{fullStratkKO} \(\Rightarrow\) \ref{Gruen}:
	Suppose One wins \(G_1(\mathscr N[K(X)], \neg \mathcal K_X)\).
	By Lemma \ref{lem:CompactOpenCofinal}, we know that One wins \(G_1(\mathscr N[K(X)], \neg \Gamma_k(X))\) so let \(s\) be a winning strategy for One in \(G_1(\mathscr N[K(X)], \neg \Gamma_k(X))\) and let \(K_0 = s(\emptyset)\).
	We define
	\[
		\sigma(\emptyset) = U_0 = [\mathbf 0; K_0, 1].
	\]
	For Two's response \(f_0 \in U_0\), let \(V_0 = f_0^{-1}[(-1,1)]\) and notice that \(K_0 \subseteq V_0\).

	For \(n \in \omega\), suppose we have a partial run
	\[
		\langle K_0 , V_0 , \ldots , K_n , V_n \rangle
	\]
	of \(G_1(\mathscr N[K(X)], \neg \mathcal K_X)\) and a partial run
	\[
		\langle U_0 , f_0 , \ldots , U_n , f_n \rangle
	\]
	of \(G_1(\mathscr N_{\mathbf 0}, \neg \Gamma_{C_k(X),\mathbf 0})\) and let
	\[
		K_{n+1} = s(K_0 , V_0 , \ldots , K_n , V_n).
	\]
	Define
	\[
		\sigma(U_0 , f_0 , \ldots , U_n , f_n) = U_{n+1} = \left[ \mathbf 0; \bigcup_{j=0}^{n+1} K_j , \frac{1}{2^{n}} \right].
	\]
	For Two's choice of \(f_{n+1} \in U_{n+1}\), define \(V_{n+1} = f_{n+1}^{-1}\left[ \left( -2^{-n} , 2^{-n} \right) \right]\) and notice that \(K_{n+1} \subseteq V_{n+1}\).
	This completes the definition of \(\sigma\).

	Now, we will show that \(f_n \to \mathbf 0\).
	Let \(K \subseteq X\) be an arbitrary compact set, \(\varepsilon > 0\), and consider \([\mathbf 0 ; K , \varepsilon]\).
	Choose \(n_0 \in \omega\) so that \(2^{-n_0} < \varepsilon\).
	By our choice of strategy, \(\{ V_n : n \geq n_0 \}\) is a \(k\)-cover of \(X\) so there must be some \(n_1 \geq n_0\) so that \(K \subseteq V_{n}\) for \(n \geq n_1\).
	That is, for any \(n \geq n_1\),
	\[
			K \subseteq V_{n+1} \implies (\forall x \in K)\left(|f_{n+1}(x)| < \frac{1}{2^n} \leq \frac{1}{2^{n_0}} < \varepsilon \right)
			\implies f_{n+1} \in [\mathbf 0 ; K,\varepsilon].
	\]
	Therefore \(f_n \to \mathbf 0\).

	\ref{Gruen} \(\Rightarrow\) \ref{CL} \(\Rightarrow\) \ref{CD}:
	Obvious.

	\ref{CD} \(\Rightarrow\) \ref{fullStratkKO}:
	Let \(s\) be a winning strategy for One in \(G_1(\mathscr T_{C_k(x)}, CD_{C_k(X)})\).
	For every non-empty open subset \(W \subseteq C_k(X)\), pick \(f \in W\), a compact \(K \subseteq X\), and \(\varepsilon \in (0,1)\) so that \([f;K,\varepsilon] \subseteq W\) and define \(\phi(W) = f\), \(\text{supp}(W) = K\), and \(\gamma(W) = [f;K,\varepsilon]\).

	Now we define a strategy \(\sigma\) for One as follows.
	To start, let \(W_0 = s(\emptyset)\) and \(\sigma(\emptyset) = K_0 = \text{supp}(W_0)\).
	For \(n \in \omega\), suppose we have
	\[
	    \langle K_0 , U_0 , K_1 , U_1 , \ldots , K_n \rangle \in \sigma
	\]
	defined along with
	\[
	    \langle W_0 , f_0 , W_1, f_1, \ldots , W_n \rangle \in s
	\]
	so that
	\begin{itemize}
	    \item
	    \((\forall j \leq n)(\text{supp}(W_j) = K_j)\),
	    \item
	    \((\forall j < n)(f_j \in \gamma(W_j))\), and
	    \item
	    \((\forall j < n)(f_j[X \setminus U_j] \equiv j)\).
	\end{itemize}
	Let \(U_n \subseteq X\) be open so that \(K_n \subseteq U_n\).
	As \(K_n\) is compact, we can find \(g_n : X \to [0,1]\) continuous so that \(g_n[K_n] \equiv 1\) and \(g_n[X\setminus U_n] \equiv 0\).
	Then, let
	\[
	    f_n = \phi(W_n) \cdot g_n + (1-g_n) \cdot n
	\]
	and observe that
	\begin{itemize}
	    \item
	    \(f_n|_{K_n} \equiv \phi(W_n)|_{K_n} \implies f_n \in \gamma(W_n)\) and
	    \item
	    \(f_n[X \setminus U_n] \equiv n\).
	\end{itemize}
	Then let
	\[
	    W_{n+1} = s(W_0 , f_0 , W_1, f_1, \ldots , W_n , f_n)
	\]
	and
	\[
	    \sigma(K_0 , U_0 , K_1 , U_1 , \ldots , K_n , U_n) = K_{n+1} := \text{supp}(W_{n+1}).
	\]
	This completes the definition of \(\sigma\).

	Now, we will show that \(\sigma\) is a winning strategy for One in \(G_1(\mathscr N[K(X)], \neg \mathcal K_X)\).
	Toward this end, suppose \(K_0 , U_0 , K_1 , U_1 , \ldots\) is a play of \(G_1(\mathscr N[K(X)], \neg \mathcal K_X)\) according to \(\sigma\) along with the corresponding play \(W_0 , f_0 , W_1 , f_1 , \ldots\) of \(G_1(\mathscr T_{C_k(x)}, CD_{C_k(X)})\) according to \(s\).
	Since \(s\) is winning in \(G_1(\mathscr T_{C_k(x)}, CD_{C_k(X)})\), let \(f \in C_k(X)\) be an accumulation point of \(\{f_n : n \in \omega\}\).

	By way of contradiction, assume there is a compact \(K \subseteq X\) so that \(K \not\subseteq U_n\) for all \(n \in \omega\) and consider the neighborhood \([f;K,1]\) of \(f\).
	As \(f\) is continuous, \(f[K]\) is bounded so choose \(n \in \omega\) so that \(|f(x)| < n\) for all \(x \in K\).
	Now, for any \(m > n\), we can find \(x \in K \setminus U_m\) which necessitates
	\[
	    1 \leq m - n = f_m(x) - n < f_m(x) - f(x) = |f_m(x) - f(x)|.
	\]
	In other words, \(f_m \not\in [f;K,1]\) for all \(m > n\), which is a contradiction.
	Therefore, One wins \(G_1(\mathscr N[K(X)], \neg \mathcal K_X)\).

	\ref{fullStratkKO} \(\Leftrightarrow\) \ref{TwoSelection}:
    This follows from Corollary \ref{cor:kKODual}.

    \ref{TwoSelection} \(\Rightarrow\) \ref{TwoBladeSelect}:
    For \(A \in \Omega_{C_k(X), \mathbf 0}\), recall that
    \[
        \mathscr U(A,n) := \{ f^{-1}[(-2^{-n},2^{-n})] : f \in A \}
    \]
    is a \(k\)-cover of \(X\).
    From Lemma \ref{lem:CompactOpenCofinal}, we know that \(\text{I} \uparrow G_1(\mathscr N[K(X)], \neg\mathcal K_X)\) if and only if \(\text{I} \uparrow G_1(\mathscr N[K(X)], \neg\Gamma_k(X))\).
    By Corollary \ref{cor:GeneralGalvin}, the games \(G_1(\mathscr N[K(X)], \neg\Gamma_k(X))\) and \(G_1(\mathcal K_X, \Gamma_k(X))\) are dual.
    Hence, we let \(t\) be a winning strategy for Two in \(G_1(\mathcal K_X, \Gamma_k(X))\).

    We now define \(\tau\).
    Given \(A_0 \in \Omega_{C_k(X), \mathbf 0}\), let \(U_0 = t(\mathscr U(A_0, 0))\).
    Choose \(f_0 \in A_0\) so that \(U_0 = f_0^{-1}[(-1,1)]\) and let \(\tau(A_0) = f_0\).

    Suppose we have
    \begin{itemize}
        \item
        \(A_0, A_1, \ldots, A_n \in \Omega_{C_k(X), \mathbf 0}\),
        \item
        \(U_{j+1} = t(\mathscr U(A_0, 0) , U_0, \mathscr U(A_1, 1) , U_1, \ldots, \mathscr U(A_{j}, j))\), and
        \item \(f_j \in A_j\) so that \(U_j = f_j^{-1}[(-2^{-j},2^{-j})]\).
    \end{itemize}
    Further suppose that \(\tau(A_0,\cdots,A_j) = f_j\).
    Given \(A_{n+1} \in \Omega_{C_k(X),\mathbf 0}\), let
    \[
        U_{n+1} = t(\mathscr U(A_0,0) , U_0 , \mathscr U(A_1, 1) ,  U_1 , \ldots , \mathscr U(A_{n}, n) ,  U_{n} , \mathscr U(A_{n+1},n+1))
    \]
    Then choose \(f_{n+1} \in A_{n+1}\) so that \(U_{n+1} = f_{n+1}^{-1}[(-2^{-(n+1)} , 2^{-(n+1)})]\) and let
    \[
        \tau(A_0 , f_0 , A_1 , f_1 , \ldots A_n , f_n , A_{n+1}) = f_{n+1}.
    \]
    This finishes the definition of \(\tau\).

    To see that \(\tau\) is winning for Two, consider a full run
    \[
        A_0 , f_0 , A_1 , f_1 , \ldots
    \]
    of the game \(G_1(\Omega_{C_k(X), \mathbf 0}, \Omega_{C_k(X), \mathbf 0})\) played according to \(\tau\) and consider the corresponding run
    \[
        \mathscr U(A_0,0) , U_0 , \mathscr U(A_1,1) , U_1 , \ldots
    \]
    of the game \(G_1(\mathcal K_X, \Gamma_k(X))\) played according to \(t\).
    We have that \(\{ U_n : n \in \omega\} \in \Gamma_k(X)\).

    We wish to show that \(\{ f_n : n \in \omega \} \in \Omega_{C_k(X) , \mathbf 0}\).
    So let \(K \in K(X)\) and \(\varepsilon > 0\) be arbitrary.
    Then pick \(n \in \omega\) so that \(2^{-n} < \varepsilon\).
    Observe that there exists \(m \geq n\) so that
    \[
        K \subseteq U_m = f_{m}^{-1}[(-2^{-m},2^{-m})].
    \]
    Then \(f_m \in [\mathbf 0; K , \varepsilon]\) establishing that \(\{ f_n : n \in \omega \} \in \Omega_{C_k(X) , \mathbf 0}\).

    \ref{TwoBladeSelect} \(\Rightarrow\) \ref{TwoDenseSelectBlade}:
    This follows from the fact that \(\mathcal D_{C_k(X)} \subseteq \Omega_{C_k(X) , \mathbf 0}\).

    \ref{TwoDenseSelectBlade} \(\Rightarrow\) \ref{TwoSelection}:
    Let \(t\) be a winning strategy for Two in \(G_1(\mathcal D_{C_k(X)}, \Omega_{C_k(X), \mathbf 0})\).
    For \(\mathscr U \in \mathcal K_X\), recall that
    \[
        D(\mathscr U) = \{g \in C_k(X) : (\exists U \in \mathscr U)(g[X \setminus U] \equiv 1)\}
    \]
    is dense in \(C_k(X)\).

    We define \(\tau\).
    Given \(\mathscr U_0 \in \mathcal K_X\), let \(f_0 = t(D(\mathscr U_0))\).
    Then choose \(U_0 \in \mathscr U_0\) so that \(f_0[X \setminus U_0] \equiv 1\) and define \(\tau(\mathscr U_0) = U_0\).
    Inductively, given \(\mathscr U_{n+1} \in \mathcal K_X\), let
    \[
        f_{n+1} = t(D(\mathscr U_0), f_0, D(\mathscr U_1) , f_1, \ldots, D(\mathscr U_n))
    \]
    and choose \(U_{n+1} \in \mathscr U_{n+1}\) so that \(f_{n+1}[X \setminus U_{n+1}] \equiv 1\).
    Set
    \[
        \tau(\mathscr U_0, U_0, \mathscr U_1, U_1, \ldots, \mathscr U_{n+1}) = U_{n+1}.
    \]

    To see that \(\{U_n : n \in \omega\}\) is a \(k\)-cover of \(X\), let \(K \in K(X)\).
    Then there must exist \(n \in \omega\) so that \(f_n \in [\mathbf 0; K , 1/2]\).
    Since \(f_n[X \setminus U_n] \equiv 1\), it must be the case that \(K \cap (X \setminus U_n) = \emptyset\); i.e. \(K \subseteq U_n\).
    Therefore \(\tau\) is a winning strategy.
\end{proof}

\begin{example}
    Give \(\omega_1\) the discrete topology and let \(L(\omega_1)\) be the one-point Lindel{\"{o}}fication of \(\omega_1\).
    Then \(L(\omega_1)\) is not countable, and so player One does not have a pre-determined strategy in the finite-open game on \(L(\omega_1)\).
    However, player One does have a winning strategy in the finite-open game on \(L(\omega_1)\) for, let One play the point at infinity first.
    Then Two plays a co-countable open set and then One need only play an enumeration of that set.
    Since every compact subset of \(L(\omega_1)\) must be finite, \(L(\omega_1)\) is an example of a space where \(\text{I} \uparrow G_1(\mathscr N[K(X)], \neg\mathcal K_X)\), but \(\text{I} \not\underset{\text{pre}}{\uparrow} G_1(\mathscr N[K(X)], \neg\mathcal K_X)\).

    We have not yet been able to find a space \(X\) where
    \begin{itemize}
        \item \(\text{I} \not\uparrow G_1(\mathscr N[X^{<\omega}],\neg \mathcal O_X)\) (the finite-open game),
        \item \(\text{I} \underset{\text{pre}}{\not\uparrow} G_1(\mathscr N[K(X)], \neg \mathcal K_X)\), and
        \item \(\text{I} \uparrow G_1(\mathscr N[K(X)], \neg\mathcal K_X)\).
    \end{itemize}
\end{example}

\subsection{Selection Principles}

Propositions \ref{prop:Pawlikowski1} and \ref{prop:Pawlikowski2} are generalizations of Pawlikowski's results \cite{Pawlikowski1994}.

Recall that a collection \(\mathcal A\) is a \emph{refinement} of a collection \(\mathcal B\), denoted \(\mathcal A \preceq \mathcal B\), if \((\forall A \in \mathcal A)(\exists B \in \mathcal B)[A \subseteq B]\).

\begin{prop}
    \label{prop:Pawlikowski1}
    {\color{red}
    Suppose \(\mathcal A \preceq \mathcal B\) and \(S_{\text{fin}}(\mathcal O(X,\mathcal A), \mathcal O(X, \mathcal B))\).
    Then \(\text{I} \not\uparrow G_{\text{fin}}(\mathcal O(X,\mathcal A), \Lambda(X, \mathcal B))\).
    Thus,
    \[
    \text{I} \underset{\text{pre}}{\uparrow} G_{\text{fin}}(\mathcal O(X,\mathcal A), \mathcal O(X, \mathcal B)) \Leftrightarrow \text{I} \uparrow G_{\text{fin}}(\mathcal O(X,\mathcal A), \mathcal O(X, \mathcal B)).
    \]
    }
    Correction:
    \[
    \text{I} \underset{\text{pre}}{\uparrow} G_{\text{fin}}(\mathcal K_X, \mathcal K_X) \Leftrightarrow \text{I} \uparrow G_{\text{fin}}(\mathcal K_X, \mathcal K_X).
    \]
    The more general statement may be true, but we do not have a proof of it at this time.
    For a proof of the statement for \(k\)-covers, see \href{https://arxiv.org/abs/2102.00296}{arXiv:2102.00296}.
\end{prop}
\begin{proof}
    {\color{red} We leave the original proof here for the record and note the exact place it goes wrong in red.}
    Suppose \(S_{\text{fin}}(\mathcal O(X,\mathcal A),\mathcal O(X,\mathcal B))\) and that One plays according to a fixed strategy.
    By \(S_{\text{fin}}(\mathcal O(X,\mathcal A),\mathcal O(X,\mathcal B))\), any \(\mathscr U \in \mathcal O(X,\mathcal A)\) contains a countable subcover \(\{U_n:n\in\omega\} \in \mathcal O(X,\mathcal B)\).
    Since Two gets to choose finitely many open sets, Two can respond to One's move with an initial segment of \(\{U_n : n \in \omega\}\).
    {\color{red}Hence, without loss of generality, we may assume that One is playing countable covers and, by taking unions, that One's plays consist of ascending open sets; i.e., that \(U_n \subseteq U_{n+1}\).
    Because we are talking about covering sets and not points, we cannot replace the open sets in the cover with finite unions.
    For example, \[\{B(q_1,1) \cup \cdots \cup B(q_n,1) : q_1 , \ldots , q_n \in \mathbb Q\}\] is a \(k\)-cover of \(\mathbb R\) but \(\{B(q,1) : q \in \mathbb Q\}\) is not a \(k\)-cover of \(\mathbb R\).}
    Then, Two's response can be coded by a natural number.
    Thus, we can identify a strategy for One with a family \(\{U_s : s \in \omega^{<\omega} \}\) satisfying
    \begin{itemize}
        \item
        for each \(s\in \omega^{<\omega}\), \(\{ U_{s \concat k} : k \in \omega\} \in \mathcal O(X,\mathcal B)\) and
        \item
        for each \(s\in \omega^{<\omega}\) and \(j \in \omega\), \(U_{s \concat j} \subseteq U_{s \concat (j+1)}\).
    \end{itemize}
    We can also identify the response by Two as a function from \(\omega\) to \(\omega\).

    For integers \(j > 0\) and \(k,m \geq 0\), define
    \[
        V_k(m,j) = \bigcap \{ U_{s \concat k} : s \in j^m \}.
    \]
    We claim that, for all \(m \geq 0\) and \(j > 0\), \(\{ V_k(m,j) : k \in \omega \} \in \mathcal O(X, \mathcal B)\) and \(V_{k}(m,j) \subseteq V_{k+1}(m,j)\).
    Fix \(m \geq 0\), \(j > 0\), and \(B \in \mathcal B\).
    For \(s \in j^m\), \(\{U_{s \concat k} : k \in \omega \} \in \mathcal O(X,\mathcal B)\) so let \(k_s \in \omega\) be so that \(B \subseteq U_{s \concat k_s}\).
    Set \(k^\ast = \max\{ k_s : s \in j^m \}\).
    By monotonicity, \(B \subseteq U_{s \concat k^\ast}\).
    Since \(s\) was arbitrary, we see that \(B \subseteq U_{s \concat k^\ast}\) for all \(s \in j^m\) and, therefore, \(B \subseteq V_{k^\ast}(m,j)\).

    For integers \(j > 0\), \(k \geq j\), and \(m,n \geq 0\), define
    \[
        W_k^n(m,j) = \bigcup \left\{ \bigcap_{i = 0}^n V_{k_{i+1}}(m+i,k_i) : j = k_0 \leq k_1 \leq \cdots \leq k_{n+1} = k \right\}
    \]
    We claim that, for all \(m, n \geq 0\), and \(j > 0\), \(\{ W_k^n(m,j) : k \geq j\} \in \mathcal O(X,\mathcal B)\) and that \(W_k^n(m,j) \subseteq W_{k+1}^n(m,j)\).
    So fix \(j = k_0 > 0\), \(m,n \geq 0\), and \(B \in \mathcal B\).
    Let \(k_1 \geq j\) be so that \(B \subseteq V_{k_1}(m,j)\).
    For \(i \geq 1\), let \(k_{i+1} \geq k_i\) be so that \(B \subseteq V_{k_{i+1}}(m+i,k_i)\).
    Then, let \(k = k_{n+1}\) and observe that
    \[
        B \subseteq \bigcap_{i=0}^n V_{k_{i+1}}(m+i,k_i) \subseteq W_k^n(m,j).
    \]
    That is, \(\{ W_k^n(m,j) : k \geq j\} \in \mathcal O(X,\mathcal B)\).

    Let \(\{A_\ell:\ell\in\omega\}\) be a partition of \(\omega\) consisting of infinite sets.
    Then, we can apply \(S_{\text{fin}}(\mathcal O(X,\mathcal A),\mathcal O(X,\mathcal B))\) to \( \{\{ W_k^n(m,j) : k \geq j \} : n \in A_\ell\}\) to obtain \(f_{m,j,\ell} : A_\ell \to \omega\) so that \(\{ W_{f_{m,j,\ell}(n)}^n(m,j) : n \in A_\ell \} \in \mathcal O(X,\mathcal B)\).
    Then let \(f_{m,j} = \bigcup_{\ell \in n} f_{m,j,\ell}\) and observe that \(f_{m,j}\) has the property that, for any \(B \in \mathcal B\), \(B \subseteq W_{f_{m,j}(n)}^n(m,j)\) for infinitely many \(n \in \omega\).

    Now we define \(f : \omega \to \omega\).
    Let \(f(0) = \max\{f_{0,1}(0),1\}\).
    Fix \(n \geq 1\) and suppose \(f(i)\) has been defined for all \(i < n\).
    Let
    \[
        f(n) = 1 + \max\left( \{f(i) : i < n\} \cup \{ f_{m,j}(i) : m,j,i \leq n \} \right).
    \]
    Let \(m \geq 0\), \(j > 0\) and \(n \geq \max\{m,j\}\).
    Then observe that \(f(m+n) \geq f(n) \geq f_{m,j}(n)\).
    For \(B \in \mathcal B\), notice that
    \[
        B \subseteq W_{f_{m,j}(n)}^n(m,j) \subseteq W_{f(m+n)}^n(m,j).
    \]

    Toward a contradiction, suppose there is \(B\in\mathcal B\) and \(m \in \omega\) so that \(B \not\subseteq U_{f\restriction_{m+n}}\) for all \(n \in \omega\).
    Notice that \(f(m) > 0\) so there must be some \(n \geq \max\{m,f(m)\}\) so that
    \[
        B \subseteq W_{f_{m,f(m)}(n)}^n(m,f(m)) \subseteq W_{f(m+n)}^n(m,f(m)).
    \]
    Now, there must be \(f(m) = k_0 \leq k_1 \leq \cdots \leq k_{n+1} = f(m+n)\) so that
    \[
        B \subseteq \bigcap_{i=0}^n V_{k_{i+1}}(m+i,k_i).
    \]
    Note that \(f\restriction_m \in k_0^m\), \(B \subseteq V_{k_1}(m,k_0)\), and \(B \not\subseteq U_{f\restriction_{m+1}}\).
    Similarly, \(f \restriction_{m+1} \in k_1^{m+1}\), \(B \subseteq V_{k_2}(m+1,k_1)\), and \(B \not\subseteq U_{f\restriction_{m+2}} = U_{f\restriction_{m+1} \concat f(m+1)}\).
    \[
        B \subseteq V_{k_2}(m+1, k_1) = \bigcap \{ U_{s \concat k_2} : s \in k_1^{m+1} \} \subseteq U_{f\restriction_{m+1} \concat k_2} \subseteq U_{f\restriction_{m+1} \concat \ell}
    \]
    where \(\ell \geq k_2\).
    Since \(B \not\subseteq U_{f\restriction_{m+1} \concat f(m+1)}\), it must be the case that \(f(m+1) < k_2\).
    Continue this way to see that \(f(m+n) < k_{n+1}\), a contradiction.
\end{proof}

\begin{cor}
    Suppose \(S_{\text{fin}}(\mathcal K_X, \mathcal K_X)\).
    Then One has no winning strategy in \(G_{\text{fin}}(\mathcal K_X, \mathcal K_X)\).
    Also, \(\text{I} \uparrow G_{\text{fin}}(\mathcal K_X, \mathcal K_X)\) if and only if \(\text{I} \underset{\text{pre}}{\uparrow} G_{\text{fin}}(\mathcal K_X, \mathcal K_X)\).
\end{cor}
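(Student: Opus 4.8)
The plan is to apply Proposition \ref{prop:Pawlikowski1} with the specialization \(\mathcal A = \mathcal B = K(X)\), the family of all non-empty compact subsets of \(X\). First I would verify the hypotheses of that proposition. The refinement requirement \(\mathcal A \preceq \mathcal B\) holds trivially, since every compact set is contained in itself, so \(K(X) \preceq K(X)\). Next, unwinding the definition of \(\mathcal O(X,\mathcal A)\), I observe that \(\mathcal O(X,K(X))\) is exactly the collection of open covers in which every compact set is contained in a single member; that is, \(\mathcal O(X,K(X)) = \mathcal K_X\). Consequently the selection hypothesis \(S_{\text{fin}}(\mathcal O(X,K(X)),\mathcal O(X,K(X)))\) is precisely the assumed \(S_{\text{fin}}(\mathcal K_X,\mathcal K_X)\).

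With these identifications in hand, the ``Thus'' clause of Proposition \ref{prop:Pawlikowski1} reads
\[
    \text{I} \underset{\text{pre}}{\uparrow} G_{\text{fin}}(\mathcal K_X, \mathcal K_X) \Leftrightarrow \text{I} \uparrow G_{\text{fin}}(\mathcal K_X, \mathcal K_X),
\]
which is verbatim the second assertion of the corollary, so this part requires no further work.

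For the first assertion I would use the other conclusion of the proposition, namely \(\text{I} \not\uparrow G_{\text{fin}}(\mathcal K_X, \Lambda(X,K(X)))\), together with a monotonicity observation on the target family. Comparing definitions, any cover in which every compact set is contained in \emph{infinitely many} members is in particular one in which every compact set is contained in \emph{some} member, so \(\Lambda(X,K(X)) \subseteq \mathcal O(X,K(X)) = \mathcal K_X\). Because shrinking the target set can only make it easier for One to force the union to miss it, a hypothetical winning strategy for One in \(G_{\text{fin}}(\mathcal K_X, \mathcal K_X)\) would also be winning in \(G_{\text{fin}}(\mathcal K_X, \Lambda(X,K(X)))\): any run whose union avoids \(\mathcal K_X\) automatically avoids the subset \(\Lambda(X,K(X))\). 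Taking the contrapositive, \(\text{I} \not\uparrow G_{\text{fin}}(\mathcal K_X, \Lambda(X,K(X)))\) yields \(\text{I} \not\uparrow G_{\text{fin}}(\mathcal K_X, \mathcal K_X)\), which is the desired conclusion.

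Since everything reduces to a direct specialization of Proposition \ref{prop:Pawlikowski1} plus an elementary inclusion, I do not anticipate a genuine obstacle. The only point deserving care is the correct direction of the monotonicity argument, i.e. confirming that passing from the target \(\mathcal K_X\) to the smaller target \(\Lambda(X,K(X))\) \emph{preserves} rather than reverses the non-existence of a winning strategy for One.
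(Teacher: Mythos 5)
Your proposal is correct and is exactly the argument the paper intends: the corollary is stated as an immediate specialization of Proposition \ref{prop:Pawlikowski1} with \(\mathcal A = \mathcal B = K(X)\), using the identification \(\mathcal O(X,K(X)) = \mathcal K_X\). Your added care about the monotonicity direction --- that \(\Lambda(X,K(X)) \subseteq \mathcal K_X\) makes the \(\Lambda\)-target game \emph{easier} for One, so \(\text{I} \not\uparrow G_{\text{fin}}(\mathcal K_X, \Lambda(X,K(X)))\) does imply \(\text{I} \not\uparrow G_{\text{fin}}(\mathcal K_X, \mathcal K_X)\) --- is precisely the step the authors leave implicit, and you have it right.
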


\begin{prop}
    \label{prop:Pawlikowski2}
    {\color{red}
    Suppose \(\mathcal A \preceq \mathcal B\) and \(S_1(\mathcal O(X, \mathcal A), \mathcal O(X, \mathcal B))\).
    Then \(\text{I} \not\uparrow G_1(\mathcal O(X, \mathcal A), \Lambda (X, \mathcal B))\).
    Thus
    \[
    \text{I} \underset{\text{pre}}{\uparrow} G_1(\mathcal O(X,\mathcal A), \mathcal O(X, \mathcal B)) \Leftrightarrow \text{I} \uparrow G_1(\mathcal O(X,\mathcal A), \mathcal O(X, \mathcal B)).
    \]
    }
    Correction:
    \[
    \text{I} \underset{\text{pre}}{\uparrow} G_{1}(\mathcal K_X, \mathcal K_X) \Leftrightarrow \text{I} \uparrow G_{1}(\mathcal K_X, \mathcal K_X).
    \]
    The more general statement may be true, but we do not have a proof of it at this time.
    For a proof of the statement for \(k\)-covers, see \href{https://arxiv.org/abs/2102.00296}{arXiv:2102.00296}.
\end{prop}
\begin{proof}
    {\color{red} We leave the original proof here for the record and note the exact place it goes wrong in red.}
    Suppose \(S_{1}(\mathcal O(X,\mathcal A),\mathcal O(X,\mathcal B))\) and that One is playing according to a fixed strategy.
    By the selection principle, every \(\mathscr U \in \mathcal O(X,\mathcal A)\) has a countable sub-cover \(\mathscr V \in \mathcal O(X,\mathcal B)\).
    So we can code One's strategy with \(\{ U_s : s \in \omega^{<\omega} \}\) which as the property that, for any \(s\in \omega^{<\omega}\), \(\{ U_{s \concat k} : k \in \omega \} \in \mathcal O(X,\mathcal B) \subseteq \mathcal O(X,\mathcal A)\) by \(\mathcal A \preceq \mathcal B\).
    A play by Two can be represented with \(f : \omega \to \omega\).
    We will show the existence of \(f : \omega \to \omega\) so that, for any \(B \in \mathcal B\), there are infinitely many \(n \in \omega\) for which \(B \subseteq U_{f \restriction_n}\).

    For \(m \geq 0\), \(j > 0\), and \(s : |j^m| \to \omega\), define
    \[
        U_s(m,j) = \bigcap \left\{ \bigcup \{ U_{t \concat (s \restriction_k)} : 0 < k \leq |j^m| \} : t \in j^m \right\}.
    \]
    If Two plays numbers \(< j\) on all rounds prior to the \(m^{\text{th}}\) round, and play according to \(s\) afterwards, then they are guaranteed to cover \(U_{t \concat (s\restriction_k)}\) at round \(m + |j^m|\).

    We claim that, for any integers \(m \geq 0\) and \(j > 0\),
    \[
        \{ U_s(m,j) : s \in \omega^{|j^m|} \} \in \mathcal O(X, \mathcal B).
    \]
    So let \(m \geq 0\), \(j > 0\), and \(B \in \mathcal B\).
    Enumerate \(j^m\) as \(\{ t_k : k < |j^m| \}\).
    We recursively define \(s : |j^m| \to \omega\) so that \(B \subseteq U_s(m,j)\).
    Choose \(s(0)\) so that \(B \subseteq U_{t_0 \concat s(0)}\).
    For \(n \geq 0\), suppose we have \(s(\ell)\) defined for all \(\ell \leq n\) so that
    \[
        B \subseteq U_{t_\ell \concat s(0) \concat s(1) \concat \cdots \concat s(\ell)}.
    \]
    Now define \(s(n+1)\) to be so that
    \[
        B \subseteq U_{t_{n+1} \concat s(0) \concat s(1) \concat \cdots \concat s(n+1)}.
    \]
    This defines \(s : |j^m| \to \omega\).
    Note that \(B \subseteq U_s(m,j)\).

    Next we claim that there are increasing functions \(g,h : \omega \to \omega\) so that whenever \(B \in \mathcal B\), there are infinitely many \(n \in \omega\) for which there exists \(s : (g(n+1) - g(n)) \to h(n+1)\) so that \(B \subseteq U_s(g(n), h(n))\).
    In symbols,
    \[
        (\forall B \in \mathcal B)(\exists^\infty n \in \omega)(\exists s : (g(n+1) - g(n)) \to h(n+1))[ B \subseteq U_s(g(n), h(n)) ].
    \]
    To do this, we define a strategy for One in \(G_{\text{fin}}(\mathcal O(X,\mathcal A), \Lambda (X,\mathcal B))\) so that Two's responses produce monotonically functions \(\omega \to \omega\).
    Define
    \[
        \mathscr U_{m,j} = \{ U_s(m,j) : s \in \omega^{|j^{m}|} \} \in \mathcal O(X,\mathcal B) \subseteq \mathcal O(X, \mathcal A)
    \]
    for \(m \geq 0\) and \(j > 1\).
    Also, define
    \[
        \mathscr F_{m,j,p} := \left\{ U_s(m,j) : s \in p^{|j^m|}\right\} \in \left[ \left\{ U_s(m,j) : s \in \omega^{|j^m|}\right\} \right]^{<\omega}
    \]
    where \(m \geq 0\), \(j > 0\), and \(p > 0\).
    Observe that, if \(p \leq q\),
    \[
        \mathscr F_{m,j,p} \subseteq \mathscr F_{m,j,q}.
    \]
    In fact, we can define
    \[
        p_{m,j} : \left[ \mathscr U_{m,j} \right]^{<\omega} \to \omega
    \]
    in the following way.
    For
    \[
        \mathcal E \in \left[ \mathscr U_{m,j} \right]^{<\omega},
    \]
    let \(I \in \left[ \omega^{|j^m|} \right]^{<\omega}\) be so that
    \[
        \mathcal E = \{ U_{s}(m,j) : s \in I \}
    \]
    and define
    \[
        p_{m,j}(\mathcal E) = 1 + \max\{ s(\ell) : s \in I \}
    \]
    We see that
    \[
        \mathcal E \subseteq \mathscr F_{m,j,p_{m,j}(\mathcal E)}.
    \]
    Set \(j_0 = 1\), \(m_0 = 0\), and
    \[
        \sigma(\emptyset) = \mathscr U_{m_0,j_0}.
    \]

    Now, for \(n \geq 0\), suppose \(\mathcal E_0 , \ldots , \mathcal E_{n-1}\), \(j_0 , j_1 , \ldots, j_n\), and \(m_0, m_1 , \ldots , m_n\) have been defined.
    Let \(m_{n+1} = m_n + |j_n^{m_n}|\).

    Then, for any
    \[
        \mathcal E_n \in \left[ \mathscr U_{m_n,j_n} \right]^{<\omega},
    \]
    let \(j_{n+1} = \max\{j_n, p_{m_n,j_n}(\mathcal E_n)\}\), and set
    \[
        \left\langle \mathscr U_{m_0,j_0} , \mathcal E_0 , \ldots , \mathscr U_{m_n,j_n} , \mathcal E_n , \mathscr U_{m_{n+1},j_{n+1}}  \right\rangle \in \sigma.
    \]
    This finishes the definition of \(\sigma\).
    {\color{red}
    Since One doesn't have any winning strategies in \(G_{\text{fin}}(\mathcal O(X,\mathcal A), \Lambda (X,\mathcal B))\) by Proposition \ref{prop:Pawlikowski1} (We have not established this fact at this point)}, there must be some counterplay by Two which results in \(m,j : \omega \to \omega\) so that
    \[
        \left \{ \mathscr F_{m_n,j_n,j_{n+1}} : n \in \omega \right \} \in \Lambda (X,\mathcal B).
    \]
    Then \(g = m\) and \(h = j\) are as desired.

    For any \(k_1 < \cdots < k_n\) and \(s_i : (g(k_{i} + 1) - g(k_i)) \to h(k_{i} + 1)\) define
    \[
        W_n(k_1,\ldots,k_n;s_1,\ldots,s_n) = \bigcap_{i=1}^n U_{s_i}(g(k_i),h(k_i)).
    \]
    Observe that \(g\) and \(h\) have been chosen so that, for any \(n \in \omega\),
    \[
        \left\{ W_n(k_1,\ldots,k_n;s_1,\ldots,s_n) : (k_1 < \cdots < k_n){\&}(\forall 1 \leq i \leq n)\left[ s_i \in h(k_{i}+1)^{(g(k_{i}+1) - g(k_i))} \right] \right\}
    \]
    is an element of \(\mathcal O(X,\mathcal B)\) and hence, an element of \(\mathcal O(X,\mathcal A)\).
    So we apply \(S_1(\mathcal O(X,\mathcal A),\mathcal O(X,\mathcal B))\) to find \(k_{i,n}\) and \(s_{i,n}\) for \(n\in\omega\) and \(i \leq n\) so that
    \[
        \{W_{n}(k_{1,n}, \ldots , k_{n,n}; s_{1,n}, \ldots , s_{n,n}) : n \in \omega \} \in \mathcal O(X,\mathcal B)
    \]
    and, moreover, there are infinitely many \(n\) so that
    \[
        B \subseteq W_{n}(k_{1,n}, \ldots , k_{n,n}; s_{1,n}, \ldots , s_{n,n})
    \]
    for any \(B \in \mathcal B\).

    Now take \(\ell_n \in \{k_{1,n} , \ldots , k_{n,n} \} \setminus \{ \ell_j : j < n \}\), say \(\ell_n = k_{i,n}\) and set \(t_n = s_{i,n}\).
    Notice that the \(\ell_n\) are all distinct and that, whenever \(B\in \mathcal B\), there are infinitely many \(n\) so that
    \[
        B \subseteq U_{t_n}(g(\ell_n) , h(\ell_n)).
    \]
    Define \(f : \omega \to \omega\) by
    \[
        f(g(\ell_n) + i) = t_n(i)
    \]
    for \(n \in \omega\) and \(i \in \text{dom}(t_n)\).
    Notice that the sets \(\{g(\ell_n) + i : i \in \text{dom}(t_n) \}\) are pairwise disjoint.
    For other \(n\in \omega\), set \(f(n) = 0\).

    We claim that \(f\) is as desired.
    Let \(B\in \mathcal B\) and notice that, if \(B \subseteq U_{t_n}(g(\ell_n),h(\ell_n))\), then
    \[
        B \subseteq \bigcup \left\{ U_{(f\restriction_{g(\ell_n)}) \concat (t_n\restriction_i)} : 0 \leq i \leq g(\ell_n + 1) - g(\ell_n) \right \}
    \]
    as \(f\restriction_{g(\ell_n)} : g(\ell_n) \to h(\ell_n)\).
    Now, \((f\restriction_{g(\ell_n)}) \concat (t_n\restriction_i) = f\restriction_{g(\ell_n) + i}\).
    {\color{red}So \(B \subseteq U_{f\restriction_{g(\ell_n) + i}}\) for some \(i\) with \(0 \leq i \leq g(\ell_n + 1) - g(\ell_n)\).
    Note that \(B\) can be a subset of a union of open sets without being completely contained in one of the open sets constituting the union.}
    Since this happened for infinitely many \(n\), \(f\) is as desired and we are done.
\end{proof}

\begin{cor}\label{cor:fullPre}
    Suppose \(S_1(\mathcal K_X, \mathcal K_X)\).
    Then One has no winning strategy in \(G_1(\mathcal K_X, \mathcal K_X)\).
    Also, \(\text{I} \uparrow G_1(\mathcal K_X, \mathcal K_X)\) if and only if \(\text{I} \underset{\text{pre}}{\uparrow} G_1(\mathcal K_X, \mathcal K_X)\)
\end{cor}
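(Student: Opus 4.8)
The plan is to recognize this corollary as a direct instantiation of Proposition \ref{prop:Pawlikowski2}. The crucial bookkeeping observation is that the class of \(k\)-covers is itself of the form \(\mathcal O(X,\mathcal A)\): writing \(K(X)\) for the family of all non-empty compact subsets of \(X\), a cover \(\mathscr U\) is a \(k\)-cover precisely when every compact set is contained in some member of \(\mathscr U\), i.e. \(\mathcal K_X = \mathcal O(X, K(X))\). Accordingly I would apply Proposition \ref{prop:Pawlikowski2} with the substitution \(\mathcal A = \mathcal B = K(X)\).

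First I would check the hypotheses of that proposition under this choice. The refinement requirement \(\mathcal A \preceq \mathcal B\) is immediate since \(K(X) \preceq K(X)\) (each compact set contains itself), and the selection hypothesis \(S_1(\mathcal O(X,\mathcal A), \mathcal O(X,\mathcal B))\) becomes exactly \(S_1(\mathcal K_X, \mathcal K_X)\), which is assumed. Proposition \ref{prop:Pawlikowski2} then delivers \(\text{I} \not\uparrow G_1(\mathcal K_X, \Lambda(X, K(X)))\), and its concluding ``Thus'' clause, under the same substitution, gives \(\text{I} \underset{\text{pre}}{\uparrow} G_1(\mathcal K_X, \mathcal K_X) \Leftrightarrow \text{I} \uparrow G_1(\mathcal K_X, \mathcal K_X)\), which is the second assertion.

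It remains to upgrade the first conclusion to a statement about the target \(\mathcal K_X\) rather than \(\Lambda(X, K(X))\), and for this I would invoke target-monotonicity of the single-selection game. A cover in which every compact set lies in infinitely many members in particular lies in at least one, so \(\Lambda(X, K(X)) \subseteq \mathcal O(X, K(X)) = \mathcal K_X\). Enlarging Two's payoff set can only help Two, hence a winning strategy for One against the target \(\mathcal K_X\) would restrict to a winning strategy against the smaller target \(\Lambda(X, K(X))\); contrapositively, \(\text{I} \not\uparrow G_1(\mathcal K_X, \Lambda(X, K(X)))\) forces \(\text{I} \not\uparrow G_1(\mathcal K_X, \mathcal K_X)\), which says One has no winning strategy in \(G_1(\mathcal K_X, \mathcal K_X)\).

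Since everything reduces to substituting into an already-established proposition, I do not anticipate a genuine obstacle here; the only points needing care are the identity \(\mathcal K_X = \mathcal O(X, K(X))\) and the orientation of the inclusion \(\Lambda(X, K(X)) \subseteq \mathcal K_X\), so that the enlargement of Two's winning set is seen to push the non-existence of a winning strategy for One in the correct direction.
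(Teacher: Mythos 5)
Your proof is correct and is exactly the instantiation the paper intends: the corollary is stated without proof as an immediate consequence of Proposition \ref{prop:Pawlikowski2} with \(\mathcal A = \mathcal B = K(X)\), using the identification \(\mathcal K_X = \mathcal O(X, K(X))\). Your explicit monotonicity step passing from the target \(\Lambda(X, K(X))\) to the larger target \(\mathcal K_X\) correctly supplies the one detail the paper leaves tacit, and the orientation of that implication is handled properly.
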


\begin{thm} \label{thm:FinalBoss}
    The following are equivalent for all Tychonoff spaces.
    \begin{enumerate}[label=(\alph*)]
        \item \label{TwoWinkKO}
        \(\text{II} \uparrow G_1(\mathscr N[K(X)], \neg \mathcal K_X)\), a variation of the compact-open game
        \item \label{TwoMarkkKO}
        \(\text{II} \underset{\text{mark}}{\uparrow} G_1(\mathscr N[K(X)], \neg \mathcal K_X)\)
        \item \label{OneWinkRoth}
        \(\text{I} \uparrow G_1(\mathcal K_X, \mathcal K_X)\), a variation of the Rothberger game
        \item \label{OnePreWinkRoth}
        \(X\) fails \(S_1(\mathcal K_X, \mathcal K_X)\), that is \(\text{I} \underset{\text{pre}}{\uparrow} G_1(\mathcal K_X, \mathcal K_X)\)
        \item \label{OneWinOmega}
        \(\text{I} \uparrow G_1(\Omega_{C_k(X),\mathbf{0}},\Omega_{C_k(X),\mathbf{0}})\),  the countable fan tightness game
        \item \label{OnePreWinOmega}
        \(C_k(X)\) is not sCFT, that is \(\text{I} \underset{\text{pre}}{\uparrow} G_1(\Omega_{C_k(X),\mathbf{0}},\Omega_{C_k(X),\mathbf{0}})\)
        \item \label{OneWinD}
        \(\text{I} \uparrow G_1(\mathcal{D}_{C_k(X)},\Omega_{C_k(X),\mathbf{0}})\), the countable dense fan tightness game
        \item \label{OnePreWinD}
        \(C_k(X)\) is not sCDFT, that is \(\text{I} \underset{\text{pre}}{\uparrow} G_1(\mathcal{D}_{C_k(X)},\Omega_{C_k(X),\mathbf{0}})\)
        \item \label{TwoWinGru}
        \(\text{II} \uparrow G_1(\mathscr N_{\mathbf 0}, \Omega_{C_k(X) , \mathbf 0})\), Gruenhage's clustering game
        \item \label{TwoMarkGru}
        \(\text{II} \underset{\text{mark}}{\uparrow} G_1(\mathscr N_{\mathbf 0}, \Omega_{C_k(X) , \mathbf 0})\)
        \item \label{TwoWinCL}
        \(\text{II} \uparrow G_1(\mathscr N_{\mathbf 0}, \Omega_{C_k(X) , \mathbf 0})\), the closure game
        \item \label{TwoMarkCL}
        \(\text{II} \underset{\text{mark}}{\uparrow} G_1(\mathscr N_{\mathbf 0}, \Omega_{C_k(X) , \mathbf 0})\)
        \item \label{TwoWinCD}
        \(\text{II} \uparrow G_1(\mathscr T_{C_k(X)} , CD_{C_k(X)})\), the closed discrete selection game
        \item \label{TwoMarkCD}
        \(\text{II} \underset{\text{mark}}{\uparrow} G_1(\mathscr T_{C_k(X)} , CD_{C_k(X)})\)
    \end{enumerate}
\end{thm}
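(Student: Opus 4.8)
The plan is to treat the four cover/selection items \ref{TwoWinkKO}--\ref{OnePreWinkRoth} as a hub to which every other statement is reduced, exploiting the duality of \(G_1(\mathscr N[K(X)],\neg\mathcal K_X)\) and \(G_1(\mathcal K_X,\mathcal K_X)\) (Corollary \ref{cor:kKODual}) together with the Pawlikowski-type collapse (Corollary \ref{cor:fullPre}). Concretely, strategic duality gives \ref{TwoWinkKO} \(\Leftrightarrow\) \ref{OneWinkRoth}, Markov duality gives \ref{TwoMarkkKO} \(\Leftrightarrow\) \ref{OnePreWinkRoth}, and the collapse \(\text{I}\uparrow G_1(\mathcal K_X,\mathcal K_X)\Leftrightarrow\text{I}\underset{\text{pre}}{\uparrow}G_1(\mathcal K_X,\mathcal K_X)\) closes \ref{OneWinkRoth} \(\Leftrightarrow\) \ref{OnePreWinkRoth}. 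Thus \ref{TwoWinkKO}--\ref{OnePreWinkRoth} are mutually equivalent, and I would use \(\text{I}\uparrow G_1(\mathcal K_X,\mathcal K_X)\) and its failure-of-\(S_1(\mathcal K_X,\mathcal K_X)\) reformulation (via the Remark identifying \(\text{I}\underset{\text{pre}}{\uparrow}\) with \(\neg S_1\)) as the two reference points.

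Second, I would link the fan-tightness block \ref{OneWinOmega}--\ref{OnePreWinD} to the hub by the two transfer maps from the preceding theorems, now with the players exchanged: the blade \(A\mapsto\mathscr U(A,n)=\{f^{-1}[(-2^{-n},2^{-n})]:f\in A\}\) turns One's moves into \(k\)-covers, and \(\mathscr U\mapsto D(\mathscr U)\) turns a \(k\)-cover into a dense blade of \(\mathbf 0\). The easy directions are clean: from \(\text{I}\uparrow G_1(\mathcal K_X,\mathcal K_X)\) one plays \(D(\mathscr U)\) to obtain \ref{OneWinD}, and hence \ref{OneWinOmega} since \(\mathcal D_{C_k(X)}\subseteq\Omega_{C_k(X),\mathbf 0}\); the point is that if Two's functions cluster at \(\mathbf 0\) then the extracted sets form a \(k\)-cover, contradicting One's win. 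Performing the same construction round by round, with \(D(\mathscr U_n)\) depending only on \(n\), yields the predetermined versions \ref{OnePreWinOmega} and \ref{OnePreWinD}.

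The keystone is the reverse implication \ref{OneWinOmega} \(\Rightarrow\) \ref{OneWinkRoth}, and here the \(\Lambda\)-cover machinery is exactly what is needed. Translating a winning strategy for One in \(G_1(\Omega_{C_k(X),\mathbf 0},\Omega_{C_k(X),\mathbf 0})\) through \(\mathscr U(A,n)\) produces a strategy for One in \(G_1(\mathcal K_X,\Lambda(X,K(X)))\): if the resulting functions fail to cluster at \(\mathbf 0\), witnessed by some compact \(K\) and \(\varepsilon>0\), then for large \(n\) one has \(2^{-n}<\varepsilon\), so \(K\) can lie in only finitely many of Two's chosen sets, and those sets do not form a \(\lambda\)-cover. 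By Proposition \ref{prop:Pawlikowski2} with \(\mathcal A=\mathcal B=K(X)\), \(\text{I}\uparrow G_1(\mathcal K_X,\Lambda(X,K(X)))\) forces the failure of \(S_1(\mathcal K_X,\mathcal K_X)\), i.e.\ \ref{OnePreWinkRoth}, whence \ref{OneWinkRoth}. I expect this to be the main obstacle: a bare \(k\)-cover is insufficient because the covering index need not be large, and it is precisely the passage to \(\lambda\)-covers (where cofinally many indices cover each compact set) that dissolves the threshold mismatch. This is why the class of \(\Lambda\)-covers is introduced.

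Finally, I would attach the clustering, closure, and closed-discrete block \ref{TwoWinGru}--\ref{TwoMarkCD} by the reverse-role analogues of the transfer arguments in the preceding theorem, reading each game from Two's side. Source--target monotonicity (\(\mathscr N_{\mathbf 0}\subseteq\mathscr T_{C_k(X)}\), together with the target relations among \(\Omega_{C_k(X),\mathbf 0}\) and \(CD_{C_k(X)}\)) supplies the routine implications among the three games, while the genuinely new step is the reverse-role version of the construction \(f_n=\phi(W_n)\cdot g_n+(1-g_n)\cdot n\), which converts a winning Two-strategy in the closed-discrete selection game into a winning Two-strategy in \(G_1(\mathscr N[K(X)],\neg\mathcal K_X)\), landing back at \ref{TwoWinkKO}. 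The Markov refinements \ref{TwoMarkGru}, \ref{TwoMarkCL}, \ref{TwoMarkCD} I would obtain by combining the Markov duality of Corollary \ref{cor:kKODual} with the strengthening to \(\gamma_k\)-covers in Corollary \ref{cor:CompactOpenCofinal}, arranging the transfer maps to consult only the current move and the round index so that they carry Markov strategies to Markov strategies. Two points to watch throughout are that Tychonoff separation is used essentially in building the bump functions, and that the printed statements must be read with the correct sources (in particular the closure game with source \(\mathscr T_{C_k(X)}\)).
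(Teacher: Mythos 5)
Your hub (items \ref{TwoWinkKO}--\ref{OnePreWinkRoth} via Corollaries \ref{cor:kKODual} and \ref{cor:fullPre}), your use of the two transfer maps \(A\mapsto\mathscr U(A,n)\) and \(\mathscr U\mapsto D(\mathscr U)\) to tie \ref{OneWinOmega}--\ref{OnePreWinD} to the hub, and your keystone \ref{OneWinOmega} \(\Rightarrow\) \ref{OnePreWinkRoth} via \(\Lambda(X,K(X))\) and Proposition \ref{prop:Pawlikowski2} all essentially match the paper's argument; in particular you correctly identify that a bare \(k\)-cover supplies only one covering index per compact set and that passing to the \(\Lambda\)-target is what reconciles the shrinking thresholds \(2^{-n}\) with an arbitrary \(\varepsilon\).

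The gap is in your treatment of \ref{TwoWinGru}--\ref{TwoMarkCD}. First, ``source--target monotonicity'' cannot supply the implications among the clustering, closure, and closed-discrete games: \(CD_{C_k(X)}\) and \(\Omega_{C_k(X),\mathbf 0}\) are not nested --- a closed discrete set is essentially the opposite of a set clustering at \(\mathbf 0\) --- so none of these arrows is a containment of targets. The paper routes exactly these implications through Propositions 21 and 22 of \cite{ClontzHolshouser} (the chains \ref{OneWinD} \(\Rightarrow\) \ref{TwoWinCL} \(\Rightarrow\) \ref{TwoWinGru} \(\Rightarrow\) \ref{OneWinOmega} and the Markov analogue), which are substantive function-space arguments, and enters the closed-discrete game by the explicit construction \ref{TwoMarkkKO} \(\Rightarrow\) \ref{TwoMarkCD}; it never needs your arrow out of \ref{TwoWinCD} into \ref{TwoWinkKO}. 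Second, that proposed conversion of a winning Two-strategy in \(G_1(\mathscr T_{C_k(X)},CD_{C_k(X)})\) into one in \(G_1(\mathscr N[K(X)],\neg\mathcal K_X)\) fails for precisely the threshold reason you flagged in the keystone: closed-discreteness of the selected functions \(g_n\) gives (taking \(f=\mathbf 0\)) a compact \(K\) and \(\varepsilon>0\) with \(g_n\in[\mathbf 0;K,\varepsilon]\) for only finitely many \(n\), hence \(K\) lies in only finitely many of the derived sets \(U_n=g_n^{-1}[(-\varepsilon_n,\varepsilon_n)]\); that shows \(\{U_n:n\in\omega\}\) is not a \(\Lambda\)-cover, but it may still be a \(k\)-cover, so Two has not won \(G_1(\mathscr N[K(X)],\neg\mathcal K_X)\) and you do not land at \ref{TwoWinkKO}. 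The repair is to land instead at \(\text{II}\uparrow G_1(\mathscr N[K(X)],\neg\Lambda(X,K(X)))\), dualize via Corollary \ref{cor:GeneralGalvin} to \(\text{I}\uparrow G_1(\mathcal K_X,\Lambda(X,K(X)))\), and apply Proposition \ref{prop:Pawlikowski2} to reach \ref{OnePreWinkRoth}; either that detour or the paper's route through the cited propositions must be made explicit, since as written this block does not close.
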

\begin{proof}
    The equivalences \ref{TwoWinkKO} \(\Leftrightarrow\) \ref{OneWinkRoth} and \ref{TwoMarkkKO} \(\Leftrightarrow\) \ref{OnePreWinkRoth} are established by Corollary \ref{cor:kKODual}.
    Corollary \ref{cor:fullPre} establishes the equivalence of \ref{OnePreWinkRoth} and \ref{OneWinkRoth}.
    Thus \ref{TwoWinkKO}, \ref{OneWinkRoth}, \ref{TwoMarkkKO}, and \ref{OnePreWinkRoth} are equivalent.

    The equivalence of \ref{OnePreWinkRoth} and \ref{OnePreWinOmega} is Theorem 2.2 of \cite{Kocinac}.

    \ref{OnePreWinkRoth} \(\Rightarrow\) \ref{OnePreWinD}:
    This is an adaptation of \cite{ClontzRelatingGames} and is done by contrapositive.
    That is, suppose \(S_1(\mathcal D_{C_k(X),\mathbf{0}},\Omega_{C_k(X),\mathbf{0}})\). We will show \(S_1(\mathcal K_X, \mathcal K_X)\).
    For \(\mathscr U \in \mathcal K_X\), recall that
    \[
        D(\mathscr U) = \{g \in C_k(X) : (\exists U \in \mathscr U)(g[X \setminus U] \equiv 1)\}
    \]
    is dense in \(C_k(X)\) as seen in the proof of Theorem \ref{thm:Hemicompact}.

    Now, let \(\{ \mathscr U_n : n \in \omega \}\) be a sequence of \(k\)-covers of \(X\).
    It follows that \(\{ D(\mathscr U_n) : n \in \omega \}\) is a sequence of dense subsets of \(C_k(X)\) so we can apply \(S_1(\mathcal D_{C_k(X)}, \Omega_{C_k(X) , \mathbf 0})\) to obtain \(\{f_n : n \in \omega\} \in \Omega_{C_k(X) , \mathbf 0}\) so that \(f_n \in D(\mathscr U_n)\) for each \(n \in \omega\).
    For each \(n \in \omega\), let \(U_n \in \mathscr U_n\) be so that \(f_n[X \setminus U_n] \equiv 1\).

    We now show that \(\{U_n : n \in \omega\}\) is a \(k\)-cover of \(X\).
    Indeed, let \(K \in K(X)\) and let \(n \in \omega\) be so that
    \[
        f_n \in [ \mathbf 0; K , 1/2].
    \]
    Since \(f_n[ X\setminus U_n] \equiv 1\), \(K \cap (X\setminus U_n) = \emptyset\).
    Hence, \(K \subseteq U_n\), establishing that \(\{ U_n : n \in \omega \}\) is a \(k\)-cover of \(X\).

    \ref{OnePreWinD} \(\Rightarrow\) \ref{OnePreWinOmega}:
    This follows from \(\mathcal{D}_{C_k(X)} \subseteq \Omega_{C_k(X),\mathbf{0}}\).
    Thus \ref{OnePreWinkRoth}, \ref{OnePreWinOmega}, and \ref{OnePreWinD} are equivalent.

    \ref{OnePreWinOmega} \(\Rightarrow\) \ref{OneWinOmega}:
    Obvious

    \ref{OneWinOmega} \(\Rightarrow\) \ref{OnePreWinOmega}:
    We do this direction by the contrapositive.
    This is similar to the proof of Theorem 13 in \cite{Scheepers1997}.
    Suppose \(S_1(\Omega_{C_k(X),\mathbf{0}},\Omega_{C_k(X),\mathbf{0}})\) and let \(\sigma\) be a strategy for One in \(G_1(\Omega_{C_k(X),\mathbf{0}},\Omega_{C_k(X),\mathbf{0}})\).
    By \(S_1(\Omega_{C_k(X),\mathbf{0}},\Omega_{C_k(X),\mathbf{0}})\), any blade at \(\mathbf 0\) yields a countable subset which is also a blade at \(\mathbf 0\) so assume One is playing countable blades.
    If One ever plays a blade which contains \(\mathbf 0\), Two can choose \(\mathbf 0\) and thus win the game.
    So we assume that One only plays sets which don't contain \(\mathbf 0\).

    We will translate \(\sigma\) into a strategy \(\sigma^*\) for One in \(G_1(\mathcal K_X, \mathcal K_X)\).
    Suppose \(\sigma(\emptyset) = \{f_{\emptyset,n} : n \in \omega\}\).
    For each \(n\), let
    \[
        U_{\emptyset,n} = f_{\emptyset,n}^{-1}\left[\left(-\frac{1}{2},\frac{1}{2}\right)\right].
    \]
    Set \(\sigma^*(\emptyset) = \{U_{\emptyset,n} : n \in \omega\}\).
    We claim that this is a \(k\)-cover of \(X\).
    Indeed, let \(K \subseteq X\) be compact.
    Since the \(\{f_{\emptyset,n} : n \in \omega\}\) is a blade at \(\mathbf 0\), there exists \(N\) so that \(f_{\emptyset,N} \in [\mathbf 0;K,1/3]\).
    Hence, \(K \subseteq U_{\emptyset,N}\).

    Now Two will play some \(U_{\emptyset,n_0}\) in response, and we can translate this play back to \(C_k(X)\) by having Two play \(f_{\emptyset,n_0}\) in that game.
    Suppose \(\sigma(f_{\emptyset,n_0}) = \{f_{n_0,n} : n \in \omega\}\).
    Set
    \[
        U_{n_0,n} = f_{n_0,n}^{-1}\left[\left(-\frac{1}{2^2},\frac{1}{2^2}\right)\right].
    \]
    Set \(\sigma^*(U_{\emptyset,n_0}) = \{U_{n_0,n} : n \in \omega\}\).
    We can continue defining \(\sigma^*\) recursively in this way.

    We will show that \(\sigma\) is not a winning strategy for One.
    By \ref{OnePreWinkRoth} \(\Rightarrow\) \ref{OnePreWinOmega}, we have that
    \[
        S_1(\Omega_{C_k(X),\mathbf{0}},\Omega_{C_k(X),\mathbf{0}}) \implies S_1(\mathcal K_X,\mathcal K_X)
    \]
    which in turn implies that One does not have a winning strategy in \(G_1(\mathcal K_X, \mathcal K_X)\).
    Thus there is some play \(U_{\emptyset,n_0},U_{n_0,n_1},\cdots\) by Two which forms a \(k\)-cover.
    Consider the corresponding play \(f_{\emptyset,n_0}, f_{n_0,n_1}\) by Two against \(\sigma\).
    Set \(f_0 = f_{\emptyset,n_0}\), \(f_1 = f_{n_0,n_1}\) and so on, and likewise define \(U_n\).
    Let \(K \subseteq X\) be compact and \(\varepsilon > 0\).
    Then \(K \subseteq U_i\) for infinitely many \(i\).
    Thus
    \[
    K \subseteq f_{i}^{-1}\left[\left(-\frac{1}{2^{i+1}},\frac{1}{2^{i+1}}\right)\right]
    \]
    Therefore \(f_i \in [\mathbf 0; K,\varepsilon]\) for infinitely many \(i\).
    Since \(K\) and \(\varepsilon\) we arbitrary, this shows that \(\mathbf 0\) is in the closure of \(\{f_i : i \in \omega\}\).
    Thus Two has won this run of the game and \(\sigma\) is not a winning strategy.
    This shows that \ref{OneWinOmega} implies \ref{OnePreWinOmega}.

    \ref{OnePreWinD} \(\Rightarrow\) \ref{OneWinD}: Obvious.

    Proposition 21 of \cite{ClontzHolshouser} shows that \ref{OneWinD} \(\Rightarrow\) \ref{TwoWinCL} \(\Rightarrow\) \ref{TwoWinGru} \(\Rightarrow\) \ref{OneWinOmega}.

    Proposition 22 of \cite{ClontzHolshouser} shows that \ref{OnePreWinD} \(\Rightarrow\) \ref{TwoMarkCL} \(\Rightarrow\) \ref{TwoMarkGru} \(\Rightarrow\) \ref{OnePreWinOmega}.

    \ref{TwoMarkkKO} \(\Rightarrow\) \ref{TwoMarkCD}:
    Suppose \(\tau\) is a winning Markov strategy for Two in \(G_1(\mathscr N[K(X)], \neg\mathcal K_X)\).
    Let \([f;K,\varepsilon]\) be a subset of Player One's play in the \(n^{\text{th}}\) inning in \(G_1(\mathscr T_X , CD_{C_k(X)})\).
    Then let \(V = \tau(K,n)\).
    We can find a continuous \(g:X \to \mathbb R\) with the property that \(g|_K = f|_K\) and \(g[X \setminus V] = \{n\}\).
    We define \(\tau^*([f;K,\varepsilon],n) = g\).
    Then \(\tau^*\) is a Markov strategy for Two in \(G_1(\mathscr T_X , CD_{C_k(X)})\).

    Suppose \(U_n = [f_n;K_n,\varepsilon_n]\), \(n \in \omega\), is a play of \(G_1(\mathscr T_X , CD_{C_k(X)})\) by player One.
    We claim that \(\{g_n = \tau^*(U_n,n) : n \in \omega\}\) is closed discrete.
    Let \(f \in C_k(X)\).
    Set \(V_n = \tau(K_n,n)\) and notice that \(\{V_n : n \in \omega\}\) is not a \(k\)-cover.
    Thus we can find a compact \(K \subseteq X\) so that \(K \not\subseteq V_n\) for all \(n\).
    Hence, there is some \(x_n \in K \setminus V_n\) so that \(g_n(x_n) = n\).
    Since \(f\) is continuous, \(f[K]\) is bounded.
    Hence we can find an \(N\) so that \(f(x)+1 < N\) for all \(x \in K\).
    Then \(g_n \notin [f;K,1]\) for all \(n > N\).
    Thus \(f\) is not a limit point of \(\{g_n : n \in \omega\}\).
    Since \(f\) was arbitrary, \(\{g_n : n \in \omega\}\) is closed discrete.
    Therefore \(\tau^*\) is a winning mark for Two in \(G_1(\mathscr T_X , CD_{C_k(X)})\).

    Finally, \ref{TwoMarkCD} \(\Rightarrow\) \ref{TwoWinCD} \(\Rightarrow\) \ref{TwoWinCL}.
\end{proof}

\begin{cor}\label{PositiveSelectionPrinciples}
    For a Tychonoff space \(X\), the following are equivalent:
    \begin{enumerate}[label=(\alph*)]
        \item \label{thm:firstKSelection}
        \(S_1(\mathcal K_X , \mathcal K_X)\), that is \(\text{I} \underset{\text{pre}}{\not\uparrow} G_1(\mathcal K_X, \mathcal K_X)\)
        \item \label{thm:bladeCKSelection}
        \(S_1(\Omega_{C_k(X) , \mathbf 0} , \Omega_{C_k(X) , \mathbf 0})\), that is \(\text{I} \underset{\text{pre}}{\not\uparrow} G_1(\Omega_{C_k(X) , \mathbf 0} , \Omega_{C_k(X) , \mathbf 0})\)
        \item \label{thm:denseCKSelection}
        \(S_1(\mathcal D_{C_k(X)} , \Omega_{C_k(X) , \mathbf 0})\), that is \(\text{I} \underset{\text{pre}}{\not\uparrow} G_1(\mathcal D_{C_k(X)} , \Omega_{C_k(X) , \mathbf 0})\)
        \item \label{thm:CKsCFT}
        \(C_k(X)\) has strong countable fan tightness
        \item \label{thm:CKsCDFT}
        \(C_k(X)\) has strong countable dense fan tightness
        \item \label{thm:notMarkCD}
        \(\text{II} \underset{\text{mark}}{\not\uparrow} G_1(\mathscr T_{C_k(X)},{CD}_{C_k(X)})\)
        \item \label{thm:notMarkkKO}
        \(\text{II} \underset{\text{mark}}{\not\uparrow} G_1(\mathscr N[K(X)], \neg \mathcal K_X)\)
    \end{enumerate}
\end{cor}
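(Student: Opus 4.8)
The plan is to obtain this corollary directly from Theorem \ref{thm:FinalBoss} by negating the relevant equivalent items, supplemented by a homogeneity argument linking the properties stated at \(\mathbf 0\) to the global fan-tightness properties of \(C_k(X)\).

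First I would set up the dictionary between the items here and those of Theorem \ref{thm:FinalBoss}. By the Remark that \(S_1(\mathcal A,\mathcal B)\) holds precisely when \(\text{I} \underset{\text{pre}}{\not\uparrow} G_1(\mathcal A,\mathcal B)\), each of \ref{thm:firstKSelection}, \ref{thm:bladeCKSelection}, and \ref{thm:denseCKSelection} is exactly the logical negation of one of the predetermined-strategy items of Theorem \ref{thm:FinalBoss}: item \ref{thm:firstKSelection} is the negation of \ref{OnePreWinkRoth}, item \ref{thm:bladeCKSelection} of \ref{OnePreWinOmega}, and item \ref{thm:denseCKSelection} of \ref{OnePreWinD}. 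Likewise \ref{thm:notMarkkKO} negates the Markov item \ref{TwoMarkkKO} and \ref{thm:notMarkCD} negates \ref{TwoMarkCD}. Because Theorem \ref{thm:FinalBoss} asserts that all of its listed items are mutually equivalent, their negations are mutually equivalent too, which at once gives the equivalence of \ref{thm:firstKSelection}, \ref{thm:bladeCKSelection}, \ref{thm:denseCKSelection}, \ref{thm:notMarkCD}, and \ref{thm:notMarkkKO}.

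It then remains only to incorporate the two global properties \ref{thm:CKsCFT} and \ref{thm:CKsCDFT}. Here I would use that \(C_k(X)\) is a topological vector space and hence a homogeneous space: for each \(f \in C_k(X)\) the translation \(g \mapsto g+f\) is a self-homeomorphism sending \(\mathbf 0\) to \(f\), carrying blades at \(\mathbf 0\) to blades at \(f\) and dense sets to dense sets. It follows that \(S_1(\Omega_{C_k(X),\mathbf 0},\Omega_{C_k(X),\mathbf 0})\) is equivalent to strong countable fan tightness of \(C_k(X)\) at every point, identifying \ref{thm:CKsCFT} with \ref{thm:bladeCKSelection}; the identical translation argument identifies the strong countable dense fan tightness property \ref{thm:CKsCDFT} with \ref{thm:denseCKSelection}. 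Combining this with the previous paragraph closes the full cycle of equivalences.

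Since all the real content is already encapsulated in Theorem \ref{thm:FinalBoss}, no step is genuinely hard; the only part that is more than bookkeeping is the homogeneity reduction passing from the statements at \(\mathbf 0\) to the global fan-tightness properties, and even that is routine once one notes that translations preserve the families \(\Omega_{C_k(X),\cdot}\) and \(\mathcal D_{C_k(X)}\).
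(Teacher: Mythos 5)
Your proposal is correct and matches the paper's own proof essentially verbatim: the paper likewise derives all of the equivalences by negating the corresponding items of Theorem \ref{thm:FinalBoss}, and handles \ref{thm:bladeCKSelection} \(\Leftrightarrow\) \ref{thm:CKsCFT} and \ref{thm:denseCKSelection} \(\Leftrightarrow\) \ref{thm:CKsCDFT} by appealing to the homogeneity of \(C_k(X)\). Your explicit translation argument just spells out the homogeneity step the paper leaves implicit.
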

\begin{proof}
    All of these equivalences follow from Theorem \ref{thm:FinalBoss} except for \ref{thm:bladeCKSelection} \(\Leftrightarrow\) \ref{thm:CKsCFT} and \ref{thm:denseCKSelection} \(\Leftrightarrow\) \ref{thm:CKsCDFT}.
    These follow from the fact that \(C_k(X)\) is a homogeneous space.
\end{proof}

\section{Further Questions}
\begin{itemize}
    \item
    How much of this theory can be recovered for \(C_k(X,[0,1])\)?
    \item
    To what extent can Propositions \ref{prop:Pawlikowski1} and \ref{prop:Pawlikowski2} be generalized?
    \item
    Does there exist a space \(X\) so that \(\text{I} \uparrow G_1(\mathscr N[K(X)], \neg\mathcal K_X)\) but One does not have a winning strategy in the finite-open game nor a pre-determined strategy in \(G_1(\mathscr N[K(X)], \neg\mathcal K_X)\)?
    \item
    How much of this theory carries over to longer length games?
\end{itemize}

\section{Acknowledgements}

The authors would like to thank Dr. Clontz for his valuable input during the writing of this manuscript and the organizers of the Spring Topology and Dynamical Systems 2018 Conference where the original inspiration for this work was born.

\providecommand{\bysame}{\leavevmode\hbox to3em{\hrulefill}\thinspace}
\providecommand{\MR}{\relax\ifhmode\unskip\space\fi MR }
\providecommand{\MRhref}[2]{%
  \href{http://www.ams.org/mathscinet-getitem?mr=#1}{#2}
}
\providecommand{\href}[2]{#2}

\end{document}